\DeclareFontFamily{U}{mathx}{\hyphenchar\font45}
\DeclareFontShape{U}{mathx}{m}{n}{
      <5> <6> <7> <8> <9> <10>
      <10.95> <12> <14.4> <17.28> <20.74> <24.88>
      mathx10
      }{}
\DeclareSymbolFont{mathx}{U}{mathx}{m}{n}
\DeclareMathAccent{\widecheck}{0}{mathx}{"71}
\theoremstyle{plain}
\newtheorem{thm}{Theorem}[section]
\newtheorem{cor}{Corollary}[thm]
\newtheorem{lem}[thm]{Lemma}
\newtheorem{prop}[thm]{Proposition}
\theoremstyle{definition}
\newtheorem{exmp}{Example}[section]
\newtheorem*{notation}{Notation}
\theoremstyle{remark}
\newtheorem*{rem}{Remark}
\newcommand\inner[2]{\langle #1, #2 \rangle}
\newcommand{\parcheck}{\check{\phantom{x}}}		
\newcommand{\tco}{\mathcal{T}^1}
\newcommand{\bo}{B(L^2(\reals^d))}
\newcommand{\reals}{\mathbb{R}}
\newcommand{\HS}{L^2(\reals^d)}
\newcommand{\F}{\mathcal{F}}
\newcommand{\SC}{\mathcal{T}}
\newcommand{\tr}{\mathrm{tr}}
\DeclareMathOperator{\spct}{sp}
\begin{document}
\pagestyle{plain}
\title{Convolutions for localization operators}
\author{Franz Luef}
\author{Eirik Skrettingland} 
\address{Department of Mathematics\\ NTNU Norwegian University of Science and
Technology\\ NO–7491 Trondheim\\Norway}
\email{franz.luef@math.ntnu.no, eirik.skrettingland@ntnu.no}
\keywords{Localization operators, Berezin transform, Banach modules, Arveson spectrum, Tauberian theorems, modulation spaces}
\subjclass{47G30; 35S05; 46E35; 47B10}
\begin{abstract}
Quantum harmonic analysis on phase space is shown to be linked with localization operators. The convolution between operators and the convolution 
between a function and an operator provide a conceptual framework for the theory of localization operators which is complemented by an appropriate 
Fourier transform, the Fourier-Wigner transform. We link the Hausdorff-Young inequality for the Fourier-Wigner transform with Lieb's inequality for ambiguity functions. Noncommutative Tauberian theorems due to Werner allow us to extend results of Bayer and 
Gr\"ochenig on localization operators. Furthermore we show that the Arveson spectrum and the theory of Banach modules provide the abstract setting of quantum harmonic analysis.    
\end{abstract}
\maketitle \pagestyle{myheadings} \markboth{F. Luef and E. Skrettingland}{Convolutions for localization operators}
\thispagestyle{empty}
\section{Introduction}

Localization operators are operators of the form

\[ \mathcal{A}_f^{\varphi_1,\varphi_2}\phi=\int_{\mathbb{R}^{2n}} f(z)\langle\phi,\pi(z)\varphi_1\rangle\pi(z)\varphi_2 \,dz,\]
for some window functions $\varphi_1,\varphi_2$,  $\pi(z)\phi(t)=e^{2\pi i\omega t}\phi(t-x)$  and a mask $f$. Boundedness properties on 
modulation spaces and other function spaces have received some attention during the last years, see for example \cite{Bayer:2014td,Boggiatto:2004,Cordero:2003ke,coni16}. 

We contribute a conceptual approach to these localization operators based on Werner's theory of quantum harmonic analysis on phase space \cite{Werner:1984}. Our presentation of \cite{Werner:1984} is based on terminology and notation from time-frequency analysis and harmonic analysis. Various proofs in \cite{Werner:1984} are just indicated and we have decided to include complete proofs for all the major results in Werner's theory. We hope that in this way the deep results by Werner become more accessible to a wider audience. 

Convolutions of functions with operators and of operators with operators have been introduced in \cite{Werner:1984}, along with a corresponding Fourier transform of operators -- the Fourier-Wigner transform. The convolution is based on a natural notion of translation of an operator via conjugation of an operator by the Schr\"odinger representation. Localization operators and the Berezin transform are expressed in terms of convolutions between a function and an operator, and between two operators. The Fourier-Wigner transform behaves like the one for functions as it maps convolutions into products, there is a Riemann-Lebesgue lemma and a Hausdorff-Young inequality \cite{Werner:1984}. We complement Werner's results with the observation that the Hausdorff-Young inequality in the rank-one case yields Lieb's uncertainty principle with constant one \cite{li90-1}. Hence if one invokes Lieb's inequality in the proof of the Hausdorff-Young inequality for the Fourier-Wigner transform and the singular value decomposition, then one obtains a sharp inequality for trace class operators in certain cases. 

Using these concepts we formulate and prove a version of Wiener`s Tauberian theorem for operators due to Werner. These variants of Tauberian theorems have shown to be of 
relevance in quantum mechanics and quantum information theory \cite{Keyl:2015bn,Kiukas:2012gt}. 

The main novel contribution is a formulation of localization operators using the convolution of a function with a rank-one operator, which gives an extension of results in 
\cite{Bayer:2014td} on the following problem: what conditions must be imposed on the windows $\varphi_1,\varphi_2$ to guarantee that the set $\{A_f^{\varphi_1,\varphi_2}|\,f\in L^1(\mathbb{R}^{2d})\}$ is dense in different spaces of operators?

In addition, the theory of Banach modules is used to prove new results on the convolutions, and the Fourier-Wigner transform is shown to be related to the 
Arveson spectrum. Finally the convolutions are considered in the context of modulation spaces, inspired by the existing literature on localization operators 
and modulation spaces.

\section{Prerequisites}
\subsection{Notation and conventions} \label{sec:notation}
If $X$ is a Banach space we will denote its dual space by $X^*$, and for $x\in X$ and $x^* \in X^*$ we write $\inner{x^*}{x}$ to denote $x^*(x)$. In order to agree with inner product notation, the duality bracket $\inner{\cdot}{\cdot}$ will always be antilinear in the second argument. 
 
Elements of $\reals^{2d}$ will often be written in the form $z=(x,\omega)$ for $x,\omega\in \reals^d$. Functions on $\reals^d$ and $\reals^{2d}$ will often play different roles, so we denote functions on $\reals^{d}$ by Greek letters such as $\psi,\phi$, and functions on $\reals^{2d}$ by Latin letters such as $f,g$. If $\psi$ and $\phi$ are functions on $\reals^d$, then we write $\psi \otimes \phi$ for the function on $\reals^{2d}$ defined by $\psi \otimes \phi(x,\omega)=\psi(x)\phi(\omega)$. Similarly, for two elements $\xi, \eta$ in some Hilbert space $\mathcal{H}$, we define the operator $\xi \otimes \eta$ on $\mathcal{H}$ by $\xi \otimes \eta (\zeta)=\inner{\zeta}{\eta}\xi$, where $\zeta \in \mathcal{H}$ and $\inner{\cdot}{\cdot}$ is the inner product on $\mathcal{H}$. 
The class of Schwartz functions on $\reals^d$ will be denoted by $\mathcal{S}(\reals^d)$, and the space of tempered distributions by $\mathcal{S}'(\reals^d)$. 

Given a function $\psi:\reals^d\to \mathbb{C}$, we define $\psi^*$ by $\psi^*(x)=\overline{\psi(x)}$ for any $x\in \reals^d$. Similarly we introduce the notation $\check{\psi}$ along with the parity operator $P$ by  $\check{\psi}(x)=P\psi(x)=\psi(-x)$ for any $x\in \reals^d$.

For $p<\infty$, $\SC^p(\mathcal{H})$ will denote the Schatten $p$-class of operators on a Hilbert space $\mathcal{H}$ with singular values in $\ell^p$. We will just write $\SC^p$ when $\mathcal{H}=\HS$. Furthermore, we define $\SC^{\infty}(\mathcal{H})$ to be $B(\mathcal{H})$; all the bounded, linear operators on $\mathcal{H}$. $K(\mathcal{H})$ denotes the closed ideal of compact operators.  

\subsection{Schatten $p$-classes}
For $T\in \tco(\mathcal{H})$ we define the \textit{trace} of $T$ by $\tr(T)=\sum\limits_{n\in \mathbb{N}} \inner{Te_n}{e_n}$, where $\{e_n\}_{n\in \mathbb{N}}$ is some orthonormal basis for $\mathcal{H}$. The trace is linear and independent of the orthonormal basis used to calculate it \cite{Conway:2000uq}. The next proposition, mainly from \cite[Thm. 18.11]{Conway:2000uq}, collects some standard properties of the trace that we are going to need later. Note that part \ref{prop:traceprops:absconv} is merely included as a step in the proof of part \ref{prop:traceprops:abstrace} in \cite{Conway:2000uq}, but we will find it useful one some occasions. 
\begin{prop}
\label{prop:traceprops}
	Let $S\in \tco(\mathcal{H})$, $A\in B(\mathcal{H})$.
	\begin{enumerate}
		\item $S^*\in \tco(\mathcal{H})$, and $\tr(S^*)=\overline{\tr(S)}$.
		\item $\tr(AS)=\tr(SA)$.
		\item \label{prop:traceprops:absconv} $\sum\limits_{n \in \mathbb{N}} |\inner{ASe_n}{e_n}|\leq \|A\|_{B(L^2)} \|S\|_{\tco}$.
		\item \label{prop:traceprops:abstrace} $|\tr(AS)|\leq \|A\|_{B(L^2)} \|S\|_{\SC^1}$.
	\end{enumerate}
\end{prop}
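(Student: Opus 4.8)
The plan is to establish the four items essentially as in \cite[Thm.~18.11]{Conway:2000uq}, treating \ref{prop:traceprops:absconv} as the one statement that needs real work and obtaining \ref{prop:traceprops:abstrace} from it for free. For the first item I would start from the polar decomposition $S=U|S|$ and observe that $|S^*|=U|S|U^*$ has the same singular values as $|S|$, so $S^*\in\tco(\mathcal{H})$; the identity $\tr(S^*)=\overline{\tr(S)}$ then drops out of $\sum_n\inner{S^*e_n}{e_n}=\sum_n\overline{\inner{Se_n}{e_n}}$ once absolute convergence of $\sum_n\inner{Se_n}{e_n}$ is known --- which is the special case $A=I$ of \ref{prop:traceprops:absconv}, so I would establish that part first. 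For the second item I would note that $\tco(\mathcal{H})$ is a two-sided ideal in $B(\mathcal{H})$, so $AS$ and $SA$ are both trace class and both traces are well defined; then factor $S=S_1S_2$ with $S_1,S_2$ Hilbert--Schmidt (e.g.\ $S_1=U|S|^{1/2}$, $S_2=|S|^{1/2}$), use the cyclicity $\tr(BC)=\tr(CB)$ for Hilbert--Schmidt $B,C$ (a Fubini argument on $\sum_{m,n}\inner{Be_n}{e_m}\inner{Ce_m}{e_n}$, absolutely convergent since $B,C\in\SC^2$), and chain $\tr(AS)=\tr((AS_1)S_2)=\tr(S_2(AS_1))=\tr((S_2A)S_1)=\tr(S_1(S_2A))=\tr(SA)$.

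The core of the argument is \ref{prop:traceprops:absconv}. Here I would fix a singular value decomposition $S=\sum_k s_k\,\xi_k\otimes\eta_k$ with $\{\xi_k\},\{\eta_k\}$ orthonormal sequences in $\mathcal{H}$, $s_k\geq 0$, and $\sum_k s_k=\|S\|_{\tco}$. Then $ASe_n=\sum_k s_k\inner{e_n}{\eta_k}A\xi_k$ by continuity of $A$, hence $\inner{ASe_n}{e_n}=\sum_k s_k\inner{e_n}{\eta_k}\inner{A\xi_k}{e_n}$. Summing $|\inner{ASe_n}{e_n}|$ over $n$, interchanging the double sum (all terms nonnegative), and applying the Cauchy--Schwarz inequality in the index $n$ gives
\[\sum_n|\inner{ASe_n}{e_n}|\leq\sum_k s_k\Big(\sum_n|\inner{e_n}{\eta_k}|^2\Big)^{1/2}\Big(\sum_n|\inner{A\xi_k}{e_n}|^2\Big)^{1/2}=\sum_k s_k\|\eta_k\|\,\|A\xi_k\|\leq\|A\|_{B(L^2)}\|S\|_{\tco},\]
using Parseval and $\|\eta_k\|=\|\xi_k\|=1$. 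Item \ref{prop:traceprops:abstrace} is then just the triangle inequality for series: $|\tr(AS)|=\big|\sum_n\inner{ASe_n}{e_n}\big|\leq\sum_n|\inner{ASe_n}{e_n}|$.

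I do not expect a serious obstacle: the only points to watch are the legitimacy of interchanging the sums over $n$ and $k$ and the termwise application of the bounded operator $A$ to the norm-convergent series for $S$, and both are harmless because all terms in the relevant sums are nonnegative and the $\ell^2$-norms that appear are finite. The proposition is really a repackaging of standard Schatten-class facts, so the effort is organizational rather than conceptual.
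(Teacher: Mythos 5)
Your proposal is correct, and it is essentially the standard argument that the paper itself does not reproduce but simply cites from \cite[Thm.~18.11]{Conway:2000uq}: the singular value decomposition plus Cauchy--Schwarz for part \ref{prop:traceprops:absconv}, the Hilbert--Schmidt factorization $S=S_1S_2$ for the cyclicity of the trace, and the triangle inequality to pass from \ref{prop:traceprops:absconv} to \ref{prop:traceprops:abstrace}. Nothing further is needed.
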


We now state the duality relations of Schatten $p$-classes \cite{Simon:2010wc}. Note that we adhere to our convention that the duality bracket is antilinear in the second argument. 
\begin{lem}
For $1<p<\infty$ the dual space of $\SC^p(\mathcal{H})$ is $\SC^{q}(\mathcal{H})$, and the duality may be given by 
		\begin{equation*}
  		\inner{T}{S}=\tr(TS^*)
		\end{equation*}
		for $S \in \SC^p(\mathcal{H})$ and $T\in \SC^{q}(\mathcal{H})$.
Furthermore, the dual space of $\tco(\mathcal{H})$ is $B(\mathcal{H})$ and the dual space of $K(\mathcal{H})$ is $\tco(\mathcal{H})$ under the same duality action. 
\end{lem}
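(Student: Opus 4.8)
This is a classical fact, and the plan is to carry out the standard duality argument, taking care that our duality bracket is antilinear in the second slot, so that ``dual space'' here means the space of bounded \emph{antilinear} functionals. The first task is to check that $T\mapsto\inner{T}{\cdot}$ maps $\SC^q(\mathcal{H})$ isometrically into $\SC^p(\mathcal{H})^*$. Boundedness with $\|\inner{T}{\cdot}\|\le\|T\|_{\SC^q}$ is H\"older's inequality for Schatten classes, $|\tr(TS^*)|\le\|T\|_{\SC^q}\|S^*\|_{\SC^p}=\|T\|_{\SC^q}\|S\|_{\SC^p}$, the case $q=\infty$, $p=1$ being exactly Proposition~\ref{prop:traceprops}(\ref{prop:traceprops:abstrace}). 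For the reverse inequality I would diagonalize $T$ by its singular value decomposition $T=\sum_n s_n\,f_n\otimes e_n$ and test $\inner{T}{\cdot}$ against the finite-rank operators $S_N=\sum_{n\le N}s_n^{q/p}\,f_n\otimes e_n$: a direct computation gives $\tr(TS_N^*)=\sum_{n\le N}s_n^{\,q}$ and $\|S_N\|_{\SC^p}=\bigl(\sum_{n\le N}s_n^{\,q}\bigr)^{1/p}$, so the ratio converges to $\|T\|_{\SC^q}$. The same computation shows, more generally, that if $B\in B(\mathcal{H})$ is such that $R\mapsto\tr(BR^*)$ is bounded by some constant $M$ on finite-rank operators with the $\SC^p$-norm, then already $B\in\SC^q(\mathcal{H})$ with $\|B\|_{\SC^q}\le M$. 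Injectivity of the embedding is then immediate, since $\tr(T(\eta\otimes\xi)^*)=\inner{T\xi}{\eta}=0$ for all $\xi,\eta$ forces $T=0$.

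For surjectivity, given $\varphi\in\SC^p(\mathcal{H})^*$ I would introduce the form $B(\xi,\eta):=\varphi(\eta\otimes\xi)$ on $\mathcal{H}$. Since $\eta\otimes\xi$ is linear in $\eta$ and antilinear in $\xi$ while $\varphi$ is antilinear, $B$ is linear in $\xi$ and antilinear in $\eta$, and $|B(\xi,\eta)|\le\|\varphi\|\,\|\eta\otimes\xi\|_{\SC^p}=\|\varphi\|\,\|\xi\|\,\|\eta\|$; hence there is a unique $T\in B(\mathcal{H})$ with $\inner{T\xi}{\eta}=B(\xi,\eta)$ and $\|T\|_{B(\mathcal{H})}\le\|\varphi\|$. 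A short computation gives $\tr\bigl(T(\eta\otimes\xi)^*\bigr)=\tr\bigl(T(\xi\otimes\eta)\bigr)=\inner{T\xi}{\eta}=\varphi(\eta\otimes\xi)$, so $\inner{T}{\cdot}$ and $\varphi$ agree on all rank-one operators, hence on the subspace of finite-rank operators by additivity and antilinearity, which is dense in $\SC^p(\mathcal{H})$ for $p<\infty$. It remains to see $T\in\SC^q(\mathcal{H})$: for every finite-rank orthogonal projection $P$ and finite-rank $S$ one has $\tr(PTP\,S^*)=\tr\bigl(T(PSP)^*\bigr)=\varphi(PSP)$ and $\|PSP\|_{\SC^p}\le\|S\|_{\SC^p}$, so by the general remark ending the previous paragraph, applied to the finite-rank operator $PTP$, we get $\|PTP\|_{\SC^q}\le\|\varphi\|$ uniformly in $P$. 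Letting $P$ increase strongly to the identity, $PTP\to T$ in the strong operator topology, and a standard semicontinuity argument for the Schatten $q$-norm (e.g.\ weak compactness in the reflexive space $\SC^q(\mathcal{H})$, or Fatou applied to the singular values) yields $T\in\SC^q(\mathcal{H})$ with $\|T\|_{\SC^q}\le\|\varphi\|$. Then $\inner{T}{\cdot}$ is bounded on $\SC^p(\mathcal{H})$ and agrees with $\varphi$ on a dense subspace, so $\varphi=\inner{T}{\cdot}$; combined with the first paragraph this also gives $\|\varphi\|=\|T\|_{\SC^q}$.

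The two predual statements follow from the same scheme. For $\tco(\mathcal{H})^*=B(\mathcal{H})$ one uses Proposition~\ref{prop:traceprops}(\ref{prop:traceprops:abstrace}) for the isometric embedding, and the operator $T$ produced from the sesquilinear form is automatically bounded, so the upgrading step is vacuous; finite-rank operators are dense in $\tco(\mathcal{H})$ for the trace norm, which gives the agreement on all of $\tco(\mathcal{H})$. For $K(\mathcal{H})^*=\tco(\mathcal{H})$ one uses that finite-rank operators are norm-dense in $K(\mathcal{H})$ together with the $p=1$ instance of the truncation argument. The one genuinely nontrivial point, and the step I expect to be the main obstacle, is promoting the merely bounded operator $T$ coming out of the form to an element of $\SC^q(\mathcal{H})$ with controlled Schatten norm: the finite-rank truncation plus semicontinuity is the standard device, and it is here, rather than in the soft duality bookkeeping, that one actually uses $p<\infty$.
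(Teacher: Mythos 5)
Your proposal is correct: the paper does not prove this lemma at all but simply quotes it from Simon's \emph{Trace Ideals and their Applications}, and your argument is exactly the standard proof found there --- the isometric embedding via H\"older plus the SVD test operators $S_N$, surjectivity via the sesquilinear form $B(\xi,\eta)=\varphi(\eta\otimes\xi)$, and the finite-rank truncation with semicontinuity of the $\SC^q$-norm to upgrade the bounded operator $T$ to an element of $\SC^q(\mathcal{H})$ (respectively $\tco(\mathcal{H})$ in the $K(\mathcal{H})$ case). The only cosmetic point is that your auxiliary claim ``boundedness of $R\mapsto\tr(BR^*)$ on finite-rank operators forces $B\in\SC^q(\mathcal{H})$'' is stated for arbitrary bounded $B$ but proved via a singular value decomposition, which presupposes compactness of $B$; since you only ever apply it to the finite-rank operators $PTP$, nothing is lost.
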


\subsection{Vector-valued integration} \label{sec:integration}
Given a Banach space $X$, a locally compact group $G$ and a function $F:G\to X$, we say that $F$ is \textit{integrable} if $\phi \circ F:G \to \mathbb{C}$ is integrable with respect to Haar measure for any bounded linear functional $\phi$ on $X$. The \textit{integral} $\int_{G} F \ d\mu$ of $F$,  where $\mu$ is Haar measure, is then a vector $v\in X$ such that $\phi(v)=\int_{G} \phi \circ F \ d\mu$ for any bounded linear functional $\phi$ on $X$. We now cite a sufficient condition for the existence of the integral, which is \cite[Thm. A.22]{Folland:2015hg}. 
\begin{thm}
\label{thm:intexists}
	Let $X$ be a Banach space, $\mu$ Haar measure on $G$, $g:G \to \mathbb{C}$ a function in $L^1(G)$ and $F:G \to X$ a bounded and continuous function. In this case the integral $\int_{G} g\cdot F \ d\mu$ exists in the sense discussed above, belongs to the closed linear span of the range of $F$ and satisfies the norm estimate
	\begin{equation*}
  	\left\Vert\ \int_{G} gF \ d\mu\right\Vert_X \leq \|g\|_{L^1(G)} \sup_{x\in G} \|F(x)\|_X .
	\end{equation*}
\end{thm}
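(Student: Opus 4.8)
The plan is to prove the result first for $g$ of compact support, constructing the integral as a limit of Riemann sums that already lie in the linear span of the range of $F$, and then to pass to a general $g\in L^1(G)$ by an approximation argument whose engine is the asserted norm estimate. Throughout, observe that for every $\phi\in X^*$ the scalar function $x\mapsto g(x)\,\phi(F(x))$ is integrable, since $\phi\circ F$ is bounded and continuous while $g\in L^1(G)$; hence $\Lambda(\phi):=\int_G g\,(\phi\circ F)\,d\mu$ is a well-defined linear functional on $X^*$ with $|\Lambda(\phi)|\le\|g\|_{L^1(G)}\,\sup_{x}\|F(x)\|_X\,\|\phi\|$. The whole point of the theorem is to show that this $\Lambda\in X^{**}$ is actually represented by a vector of $X$ lying in $V:=\overline{\operatorname{span}}\,F(G)$.

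\emph{Step 1: $g\in C_c(G)$.} Put $K=\operatorname{supp}g$. From continuity of $F$ and compactness of $K$ one extracts a uniform-continuity statement: for each $\varepsilon>0$ there is a symmetric neighbourhood $U$ of the identity such that $\|F(x)-F(y)\|_X<\varepsilon$ whenever $x,y\in K$ with $x^{-1}y\in U$ (cover $K$ by finitely many translates of a small symmetric neighbourhood). Choose a finite measurable partition $\{E_i\}$ of $K$ with each $E_i\subseteq x_iU$ for some $x_i\in E_i$, and form $R_{\mathcal P}=\sum_i\bigl(\int_{E_i}g\,d\mu\bigr)F(x_i)\in\operatorname{span}F(G)$. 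Comparing two such sums via a common refinement, the difference is bounded by $2\varepsilon\|g\|_{L^1(G)}$, so $\{R_{\mathcal P}\}$ is a Cauchy net as the partition refines and converges to some $v\in V$; moreover $\|R_{\mathcal P}\|_X\le\sum_i|\int_{E_i}g|\,\|F(x_i)\|\le\|g\|_{L^1(G)}\sup_x\|F(x)\|$, so the limit satisfies the norm estimate. Applying $\phi\in X^*$ turns $R_{\mathcal P}$ into an ordinary Riemann sum for $\int_K g\,(\phi\circ F)\,d\mu$, which converges to that integral; hence $\phi(v)=\Lambda(\phi)$ for all $\phi$, i.e.\ $v=\int_G gF\,d\mu$ in the sense of Section~\ref{sec:integration}.

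\emph{Step 2: general $g\in L^1(G)$.} Pick $g_n\in C_c(G)$ with $\|g-g_n\|_{L^1(G)}\to0$ and let $v_n=\int_G g_nF\,d\mu$ from Step 1. The construction in Step 1 is linear in $g$, so $v_n-v_m=\int_G(g_n-g_m)F\,d\mu$, and the Step-1 norm bound gives $\|v_n-v_m\|_X\le\|g_n-g_m\|_{L^1(G)}\sup_x\|F(x)\|\to0$. By completeness of $X$, $v_n\to v$ for some $v\in X$, and $v\in V$ since $V$ is closed and contains every $v_n$. For $\phi\in X^*$, $\phi(v_n)\to\phi(v)$ while $\phi(v_n)=\int_G g_n(\phi\circ F)\,d\mu\to\int_G g(\phi\circ F)\,d\mu=\Lambda(\phi)$ by the same $L^1$-estimate, so $v=\int_G gF\,d\mu$; passing to the limit in $\|v_n\|_X\le\|g_n\|_{L^1(G)}\sup_x\|F(x)\|$ yields the stated estimate.

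The main obstacle is Step 1: making the Riemann-sum argument rigorous in a general locally compact group requires the correct notion of uniform continuity of $F$ on the compact set $K$ together with a careful choice of measurable partitions subordinate to a neighbourhood of the identity. In the situation actually used in this paper, $G=\reals^{2d}$, both points are completely routine; and there one may alternatively invoke the Bochner integral, since $gF$ is then strongly measurable with $\|gF\|_X\in L^1(G)$, its Bochner integral is a weak integral with the same value, and approximation by simple functions valued in $\operatorname{span}F(G)$ places it in $V$. Everything beyond Step 1 is bookkeeping driven by the a priori bound $\|\Lambda\|\le\|g\|_{L^1(G)}\sup_x\|F(x)\|$.
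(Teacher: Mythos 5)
Your argument is correct. Note that the paper does not prove this statement at all: it is quoted verbatim as \cite[Thm.\ A.22]{Folland:2015hg}, so there is no internal proof to compare against. Your two-step scheme --- Riemann sums subordinate to a neighbourhood of the identity for $g\in C_c(G)$ (using uniform continuity of $F$ on $\mathrm{supp}\,g$, with the sums visibly lying in $\mathrm{span}\,F(G)$ and obeying the norm bound), followed by $L^1$-approximation for general $g$ --- is essentially the standard proof given in Folland's appendix, and the only points needing polish are routine: the uniform-continuity lemma should be set up so that $x^{-1}y\in UU$ (not just $U$) controls $\|F(x)-F(y)\|$, since two points of a cell $E_i\subseteq x_iU$ are only known to differ by an element of $UU$, and the Cauchy net should be indexed by pairs (neighbourhood, subordinate partition) rather than by refinement alone.
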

Since the definition of the integral is based on the dual space $X^*$, this definition is often called the "weak" definition of the integral \cite{Folland:2015hg}. By definition, the integral commutes with linear functionals. As is shown in \cite{Folland:2015hg}, it actually commutes with all bounded, linear operators.
\begin{prop}
\label{prop:integralandoperator}
	Let $X,Y$ be Banach spaces, $A:X\to Y$ a bounded linear operator and $\mu$ Haar measure. If $F:G \to X$ is an integrable function such that the integral $\int_{G} F \ d\mu$ exists in $X$, then $T\circ F$ is an integrable function such that the integral $\int_{G} T \circ F \ d\mu$ exists in $Y$. Furthermore, $\int_{G} T \circ F \ d\mu=T \left(  \int_{G} F \ d\mu \right)$.
\end{prop}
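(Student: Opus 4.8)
The plan is to reduce the claim about a bounded operator $T \colon X \to Y$ to the defining property of the weak integral, which already guarantees commutation with bounded \emph{linear functionals}. First I would verify integrability of $T \circ F$: given any $\psi \in Y^*$, the composition $\psi \circ T$ lies in $X^*$ because $T$ is bounded and linear, so $(\psi \circ T) \circ F = \psi \circ (T \circ F)$ is integrable with respect to Haar measure by the hypothesis that $F$ is integrable. This shows $T \circ F \colon G \to Y$ satisfies the integrability condition from the definition in Section \ref{sec:integration}.

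Next I would identify the value of the integral. Set $v = \int_G F \, d\mu \in X$, which exists by assumption, and set $w = T(v) \in Y$. I claim $w = \int_G T \circ F \, d\mu$. To see this, it suffices to check that $\psi(w) = \int_G \psi \circ (T \circ F) \, d\mu$ for every $\psi \in Y^*$, since the weak integral is characterized by this property (and is unique, as $Y^*$ separates points of $Y$). For such a $\psi$, we have $\psi(w) = \psi(T(v)) = (\psi \circ T)(v)$, and since $\psi \circ T \in X^*$, the defining property of $v = \int_G F \, d\mu$ gives $(\psi \circ T)(v) = \int_G (\psi \circ T) \circ F \, d\mu = \int_G \psi \circ (T \circ F)\, d\mu$. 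Comparing the two ends establishes the identity, hence $\int_G T \circ F \, d\mu$ exists in $Y$ and equals $T\left(\int_G F \, d\mu\right)$.

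There is really no serious obstacle here: the only subtlety worth flagging is that the argument uses nothing about $F$ being bounded or continuous, only that the integral $\int_G F \, d\mu$ is assumed to exist in the weak sense, so the statement is purely formal once one unwinds the definitions. (If one wished, the continuous bounded case could instead be deduced directly from Theorem \ref{thm:intexists} applied in $Y$ together with the norm estimate, but the functional-analytic argument above is cleaner and more general.) The uniqueness of the weak integral, used implicitly, follows from the Hahn--Banach theorem, which ensures $Y^*$ separates the points of $Y$.
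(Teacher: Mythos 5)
Your proof is correct and is exactly the standard argument: the paper itself does not prove this proposition but defers to Folland's appendix, where the proof is precisely your reduction to functionals via $\psi \circ T \in X^*$ together with the Hahn--Banach separation argument for uniqueness. Nothing is missing.
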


In this paper we will consider $F:G\to B(\mathcal{H})$ that are only strongly continuous, where $\mathcal{H}$ is a Hilbert space. Theorem \ref{thm:intexists} is therefore not directly applicable, and we need to define the integral of $F$ pointwise. If we fix $\eta \in \mathcal{H}$, then the map $F_{\eta}:G \to \mathcal{H}$ given by $F_{\eta}(t)=F(t)(\eta)$ is continuous, and theorem \ref{thm:intexists} tells us that $\int_{G} g\cdot F_{\eta} \ d\mu$ exists as an element of $\mathcal{H}$ for $g\in L^1(G)$. It is then a simple exercise to show that $\eta \mapsto \int_{G} g\cdot F_{\eta} \ d\mu$ is a bounded, linear operator on $\mathcal{H}$, and it is this operator that we will denote by $\int_{G} g\cdot F \ d\mu$ in this case. It is then true that $\|\int_{G} g\cdot F \ d\mu\|_{B(\mathcal{H})}\leq \|g\|_{L^1} \sup_{z\in G} \|F(z)\|_{B(\mathcal{H})}$ \cite{Folland:2015hg}.
\begin{prop}
\label{prop:intistrace}
	Let $\mathcal{H}$ be a Hilbert space, $G$ a locally compact group, $U:G \to \mathcal{U}(\mathcal{H})$ a strongly continuous function, $T\in \tco(\mathcal{H})$, and $f\in L^1(G)$. Here $\mathcal{U}(\mathcal{H})$ denotes the unitary operators. Define the operator $I_T$ by
\begin{equation*}
  I_T=\iint_{G} f(z) U(z)TU(z)^* \ dz.
\end{equation*}
$I_T\in \tco(\mathcal{H})$ with $\|I_T\|_{\tco}=\|f\|_{L^1} \|T\|_{\tco}$, and if $S\in B(\mathcal{H})$ then
\begin{equation*}
  \tr(SI_T)=\iint_{G} f(z)\tr(S U(z)TU(z)^*) \ dz.
\end{equation*}

\end{prop}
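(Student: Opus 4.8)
The plan is to realize $I_T$, which the paragraph preceding the statement only produces as a pointwise (strongly convergent) $B(\mathcal{H})$-valued integral, as a genuine $\tco(\mathcal{H})$-valued integral by invoking Theorem~\ref{thm:intexists} with $X=\tco(\mathcal{H})$. For this I must verify that the map $F\colon G\to\tco(\mathcal{H})$ defined by $F(z)=U(z)TU(z)^*$ is bounded and norm-continuous. Boundedness is automatic: for a unitary $U$ one has $|UTU^*|=U|T|U^*$, so $UTU^*$ and $T$ have the same singular values, and hence $\|F(z)\|_{\tco}=\|T\|_{\tco}$ for every $z$, using also the conjugation-invariance of the trace recorded in Proposition~\ref{prop:traceprops}.

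For the continuity I would argue by approximation. If $T=\xi\otimes\eta$ has rank one, a direct computation gives $U(z)(\xi\otimes\eta)U(z)^*=(U(z)\xi)\otimes(U(z)\eta)$, an expression in which $U(z)^*$ no longer appears; combining the strong continuity of $U$ with the elementary inequality $\|\alpha\otimes\beta-\alpha'\otimes\beta'\|_{\tco}\le\|\alpha-\alpha'\|\,\|\beta\|+\|\alpha'\|\,\|\beta-\beta'\|$ shows that $F$ is $\tco$-continuous in this case, hence — by linearity — whenever $T$ has finite rank. For a general $T\in\tco(\mathcal{H})$, the singular value decomposition furnishes finite-rank operators $T_N$ with $\|T-T_N\|_{\tco}\to 0$, and since $\|U(z)(T-T_N)U(z)^*\|_{\tco}=\|T-T_N\|_{\tco}$ uniformly in $z$, the map $F$ is a uniform limit of $\tco$-continuous functions and is therefore itself continuous. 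Theorem~\ref{thm:intexists} (applied with $g=f$) then guarantees that
\[
  J_T:=\iint_{G}f(z)\,U(z)TU(z)^*\,dz
\]
exists in $\tco(\mathcal{H})$ and obeys the estimate $\|J_T\|_{\tco}\le\|f\|_{L^1}\sup_{z}\|F(z)\|_{\tco}=\|f\|_{L^1}\|T\|_{\tco}$.

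It remains to identify $J_T$ with the operator $I_T$ from the statement and to establish the trace formula. For the identification, fix $\eta\in\mathcal{H}$; the evaluation map $A\mapsto A\eta$ is a bounded linear operator $\tco(\mathcal{H})\to\mathcal{H}$, so Proposition~\ref{prop:integralandoperator} yields $J_T\eta=\iint_{G}f(z)\,U(z)TU(z)^*\eta\,dz$, which is precisely how $I_T\eta$ was defined. Since $\eta$ was arbitrary, $I_T=J_T\in\tco(\mathcal{H})$ with the asserted norm bound. For the trace identity, note that for $S\in B(\mathcal{H})$ the functional $A\mapsto\tr(SA)$ is a bounded linear functional on $\tco(\mathcal{H})$ by part~\ref{prop:traceprops:abstrace} of Proposition~\ref{prop:traceprops}; since $J_T$ is by construction a weak integral in $\tco(\mathcal{H})$, it commutes with this functional, giving $\tr(SI_T)=\iint_{G}f(z)\,\tr\!\left(SU(z)TU(z)^*\right)dz$.

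The one genuinely delicate point is the $\tco$-norm continuity of $z\mapsto U(z)TU(z)^*$: for $T$ merely bounded this can fail when $U$ is only strongly continuous, so the trace-class hypothesis enters essentially, and the reduction to the finite-rank case via the singular value decomposition is the crucial step. Once that is in place, everything else is a routine application of the vector-valued integration machinery assembled in Section~\ref{sec:integration} together with the unitary invariance of the trace norm.
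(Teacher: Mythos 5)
Your proof is correct, but it takes a genuinely different route from the paper's, and the comparison is instructive. The paper never upgrades the integral to a $\tco(\mathcal{H})$-valued one: it keeps $I_T$ as the pointwise/weakly defined element of $B(\mathcal{H})$ from Section~\ref{sec:integration}, asserts the identity $|I_T|=\iint_G|f(z)|\,U(z)|T|U(z)^*\,dz$, and then computes $\tr(|I_T|)$ over an orthonormal basis using Tonelli and the unitary invariance of the trace. You instead verify that $z\mapsto U(z)TU(z)^*$ is \emph{trace-norm} continuous (rank-one case via $U(z)(\xi\otimes\eta)U(z)^*=(U(z)\xi)\otimes(U(z)\eta)$, then finite rank, then a uniform limit using the singular value decomposition), so that Theorem~\ref{thm:intexists} applies with $X=\tco(\mathcal{H})$; membership in $\tco(\mathcal{H})$, the norm bound, and the trace identity then all fall out of the general machinery, the last because $A\mapsto\tr(SA)$ lies in $\tco(\mathcal{H})^*$. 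Your continuity lemma is exactly the point the paper glosses over (it is the trace-class analogue of Proposition~\ref{prop:continuity}), and it makes the identification of $I_T$ as a trace-class operator transparent; the paper's route avoids proving continuity but leans on the unproved formula for $|I_T|$ and on Fubini for the trace identity.

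One discrepancy you should be aware of: you obtain only the inequality $\|I_T\|_{\tco}\le\|f\|_{L^1}\|T\|_{\tco}$, whereas the statement (and the paper's computation) asserts equality. This is not a gap in your argument --- the asserted equality is false in general. Take $U(z)\equiv I$ and $f$ with $\int f=0$ but $f\not\equiv0$: then $I_T=\bigl(\int f\bigr)T=0$ while $\|f\|_{L^1}\|T\|_{\tco}>0$; the same example refutes the paper's intermediate claim $|I_T|=\iint_G|f(z)|\,U(z)|T|U(z)^*\,dz$. Equality does hold when $f\ge0$ and $T\ge0$ (then $I_T\ge0$ and $\tr(I_T)=\|f\|_{L^1}\tr(T)$ by your trace formula with $S=I$), and only the inequality is ever used later in the paper, so your version is the correct and sufficient one.
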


\begin{proof}
	The strong continuity of $z\mapsto U(z)TU(z)^*$ follows from the strong continuity of $U(z)$, so the integral defining $I_T$ exists by the preceding discussion. 
	
	A slightly tedious but straightforward calculation using proposition \ref{prop:integralandoperator} confirms that  $| I_T|=\iint_{G} |f(z)| U(z)|T|U(z)^* \ dz$, where $|T|$ denotes the positive part $(T^*T)^{1/2}$ in the polar decomposition of $T$. If $\{e_n\}_{n\in \mathbb{N}}$ is an orthonormal basis for $\mathcal{H}$, the trace class norm $\tr(|I_T|)$ is given by
	\begin{align*}
  	\sum_{n \in \mathbb{N}} \inner{\iint_{G} |f(z)| U(z)|T|U(z)^* \ dz \ e_n}{e_n} &=\sum_{n\in \mathbb{N}}\iint_{G} |f(z)| \inner{U(z)|T|U(z)^*e_n}{e_n} \ dz \\
  	&= \iint_{G} |f(z)| \sum_{n\in \mathbb{N}} \inner{|T|U(z)^*e_n}{U(z)^*e_n} \ dz \\
  	&= \|T\|_{\tco} \|f\|_{L^1}.
	\end{align*}
	We have used that $\{U(z)^*\psi_n\}_{n\in \mathbb{N}}$ is another orthonormal basis since $U(z)$ is unitary, and the trace is independent of the basis used to calculate it. We have also used Tonelli's theorem to switch the order of the sum and integral.
	
	In order to prove the last formula, let $\{e_n\}_{n\in \mathbb{N}}$ be an orthonormal basis for $\mathcal{H}$.
	\begin{align*}
  \tr(SI_T)&= \sum_{n\in \mathbb{N}} \inner{SI_Te_n}{e_n} \\
  &= \sum_{n\in \mathbb{N}} \iint_{G} f(z) \inner{SU(z)TU(z)^* e_n}{e_n} \ dz,
\end{align*}
where we have used proposition \ref{prop:integralandoperator} to move $S$ inside the integral, and then moved the inner product inside the integral by the weak definition of the integral. The result would clearly follow if we could move the sum inside the integral, and we therefore use Fubini's theorem which applies since 
\begin{align*}
  \iint_{G} \sum_{n\in \mathbb{N}}|f(z) \inner{SU(z)TU(z)^* e_n}{e_n}| \ dz \leq \|T\|_{\tco} \|S\|_{B(\mathcal{H})} \iint_{G} |f(z)|  \ dz < \infty
\end{align*}
by part \eqref{prop:traceprops:absconv} of proposition \ref{prop:traceprops}.
\end{proof}

\subsection{Modulation spaces} \label{sec:modspaces} 
The modulation spaces are a class of spaces of functions and distributions introduced by Feichtinger in a series of papers starting with the introduction of the so-called Feichtinger algebra in \cite{Feichtinger:1981fz}, and they have since been recognized as a suitable setting for time-frequency analysis \cite{Grochenig:2001}. To define these spaces, we need to define the fundamental operators in time-frequency analysis. If $\psi:\reals^d\to \mathbb{C}$ and $z=(x,\omega)\in \reals^{2d}$, we define the \textit{translation operator} $T_x$ by $T_x\psi (t)=\psi(t-x)$, the \textit{modulation operator} $M_{\omega}$ by $M_{\omega}\psi (t)=e^{2 \pi i \omega \cdot t} \psi (t)$ and the \textit{time-frequency shifts} $\pi(z)$ by $\pi(z)=M_{\omega}T_x$. The translation and modulation operators satisfy the important commutation relation $M_{\omega}T_x = e^{2\pi i x \cdot \omega} T_x M_{\omega}$.

For $\psi,\phi \in L^2(\reals^d)$ the \textit{short-time Fourier transform} (STFT) $V_{\phi}\psi$ of $\psi$ with window $\phi$ is the function on $\reals^{2d}$
 defined by
 \begin{equation*}
  V_{\phi}\psi(z)=\inner{\psi}{\pi(z)\phi}
\end{equation*}
for $z\in \reals^{2d}$. The STFT can be extended to other spaces by interpreting the bracket $\inner{\cdot}{\cdot}$ as a duality bracket. This allows us to consider $V_{\phi}\psi$ for $\phi \in \mathcal{S}(\reals^d)$ and $\psi \in \mathcal{S}'(\reals^d)$. 
We further define the \textit{cross-ambiguity function} $A(\psi,\phi)$ of $\psi$ and $\phi$ by 
\begin{equation*}
  A(\psi, \phi)(z)=e^{\pi i x \cdot \omega} V_{\phi}\psi(z).
\end{equation*}

To define the modulation spaces we fix a window $\phi \in \mathcal{S}(\reals^d)\setminus \{0\}$. For $1\leq p,q\leq \infty$, the \textit{modulation space} $M^{p,q}(\reals^d)$ is the set of tempered distributions $\psi$ such that 
	\begin{equation*}
 \|\psi \|_{M^{p,q}} = \left( \int_{\reals^d} \left(\int_{\reals^d} |V_{\phi}\psi(x,\omega)|^p \ dx \right)^{q/p} \ d\omega \right)^{1/q} < \infty.
 \end{equation*}
 In the special cases where $p$ or $q$ is $\infty$, the integral is replaced by an essential supremum. When $p=q$, we will denote the space $M^{p,p}(\reals^{d})$ by $M^p(\reals^d)$.

 The modulation spaces are Banach spaces with the norms $\|\psi\|_{M^{p,q}}$, and using a different window $\phi \in \mathcal{S}(\reals^d)\setminus \{0\}$ in the definition yields the same spaces with equivalent norms \cite{Grochenig:2001}. 

\begin{lem}[Moyal's identity]
If $\psi_1, \psi_2, \phi_1, \phi_2 \in L^2(\reals^d)$, then $V_{\phi_i}\psi_j \in L^2(\reals^{2d})$ for $i,j\in \{1,2\}$, and the relation
\begin{equation*}
	\inner{V_{\phi_1}\psi_1}{V_{\phi_2}\psi_2}=\inner{\psi_1}{\psi_2}\overline{\inner{\phi_1}{\phi_2}}
\end{equation*}
holds, where the leftmost inner product is in $L^2(\reals^{2d})$ and those on the right are in $\HS$.
\end{lem}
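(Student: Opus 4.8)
The plan is to realize $V_{\phi}\psi$ as the composition of a partial Fourier transform with a change of variables, applied to an auxiliary function on $\reals^{2d}$, and then to read off both assertions from the Plancherel theorem together with Fubini's theorem.

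To this end, for $\psi,\phi\in\HS$ put $F_{\psi,\phi}(t,x)=\psi(t)\overline{\phi(t-x)}$ on $\reals^{2d}$. Tonelli's theorem and the substitution $u=t-x$ in the inner integral give
\begin{equation*}
\int_{\reals^d}\!\int_{\reals^d}|\psi(t)|^2|\phi(t-x)|^2\,dx\,dt=\|\psi\|_{\HS}^2\,\|\phi\|_{\HS}^2<\infty,
\end{equation*}
so $F_{\psi,\phi}\in L^2(\reals^{2d})$ and $F_{\psi,\phi}(\cdot,x)\in L^1(\reals^d)\cap L^2(\reals^d)$ for almost every $x$ (the $L^1$-bound in fact holds for every $x$ by Cauchy--Schwarz). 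Since $\pi(z)\phi(t)=e^{2\pi i\omega\cdot t}\phi(t-x)$ for $z=(x,\omega)$, we have
\begin{equation*}
V_{\phi}\psi(z)=\inner{\psi}{\pi(z)\phi}=\int_{\reals^d}F_{\psi,\phi}(t,x)\,e^{-2\pi i\omega\cdot t}\,dt,
\end{equation*}
so $V_{\phi}\psi$ is obtained from $F_{\psi,\phi}$ by applying the Fourier transform in the first $\reals^d$-variable and then interchanging the two $\reals^d$-blocks; both operations are unitary on $L^2(\reals^{2d})$. Hence $V_{\phi}\psi\in L^2(\reals^{2d})$ with $\|V_{\phi}\psi\|_{L^2(\reals^{2d})}=\|\psi\|_{\HS}\|\phi\|_{\HS}$, and applying this to each of $V_{\phi_1}\psi_1,V_{\phi_1}\psi_2,V_{\phi_2}\psi_1,V_{\phi_2}\psi_2$ proves the first claim.

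Let $U$ denote the unitary just described, so $V_{\phi_j}\psi_i=U F_{\psi_i,\phi_j}$. Polarizing the Plancherel identity (i.e.\ using $\inner{Uf}{Ug}=\inner{f}{g}$) yields
\begin{equation*}
\inner{V_{\phi_1}\psi_1}{V_{\phi_2}\psi_2}=\inner{F_{\psi_1,\phi_1}}{F_{\psi_2,\phi_2}}=\int_{\reals^d}\!\int_{\reals^d}\psi_1(t)\overline{\psi_2(t)}\,\overline{\phi_1(t-x)}\phi_2(t-x)\,dt\,dx.
\end{equation*}
This double integral converges absolutely: integrating $|\phi_1(t-x)||\phi_2(t-x)|$ over $x$ gives $\|\phi_1\|_{\HS}\|\phi_2\|_{\HS}$ by Cauchy--Schwarz, independently of $t$, and then integrating $|\psi_1(t)||\psi_2(t)|$ over $t$ gives the finite bound $\|\psi_1\|_{\HS}\|\psi_2\|_{\HS}\|\phi_1\|_{\HS}\|\phi_2\|_{\HS}$. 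So Fubini's theorem applies; carrying out the $x$-integration first (substitution $u=t-x$) factors the integral as $\bigl(\int_{\reals^d}\psi_1(t)\overline{\psi_2(t)}\,dt\bigr)\bigl(\int_{\reals^d}\overline{\phi_1(u)}\phi_2(u)\,du\bigr)=\inner{\psi_1}{\psi_2}\,\overline{\inner{\phi_1}{\phi_2}}$, which is the assertion.

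The only slightly delicate point is the identification, in the second paragraph, of the pointwise oscillatory integral defining $V_{\phi}\psi(x,\cdot)$ with the value of the $L^2$-theoretic partial Fourier transform of $F_{\psi,\phi}$; this is legitimate because $F_{\psi,\phi}(\cdot,x)\in L^1(\reals^d)\cap L^2(\reals^d)$ for almost every $x$, so the two coincide for almost every $z$, which is all that is needed in order to compute an $L^2$-inner product. Everything else is routine bookkeeping with Fubini/Tonelli and changes of variables. (Alternatively, one could first establish the identity for $\psi_i,\phi_j\in\mathcal{S}(\reals^d)$ from the Plancherel theorem on $\reals^d$ and then pass to general $L^2$-functions by density and continuity of both sides in each argument; but this route still relies on the $L^2$-boundedness of $V_{\phi}$, i.e.\ essentially on the computation above.)
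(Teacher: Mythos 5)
Your proof is correct. The paper does not actually prove this lemma---it is quoted as a standard fact from the time-frequency literature (it is Lemma 3.2.1 in Gr\"ochenig's \emph{Foundations of Time-Frequency Analysis})---and your argument is essentially the canonical one: writing $V_{\phi}\psi(x,\cdot)$ as the Fourier transform of $t\mapsto\psi(t)\overline{\phi(t-x)}$, invoking Plancherel in the $t$-variable, and finishing with Fubini and the substitution $u=t-x$. Your care with the $L^1\cap L^2$ identification of the pointwise and $L^2$-theoretic partial Fourier transforms is exactly the point that is usually glossed over, so nothing is missing.
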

The following proposition is sometimes known as Lieb's uncertainty principle \cite{Grochenig:2001}.
\begin{prop} \label{prop:lieb}
	Let $\phi,\psi\in \HS$, and let $2\leq p < \infty$. Then
\begin{equation*}
  \iint_{\reals^{2d}} |V_{\phi}\psi(z)|^p \ dz \leq \left(\frac{2}{p}\right)^d \|\phi\|_{L^2}^p\|\psi\|_{L^2}^p.
\end{equation*}
\end{prop}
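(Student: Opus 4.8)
The plan is to prove Lieb's uncertainty principle by interpolating between the trivial $L^2$ and $L^\infty$ bounds on the STFT. First I would record the two endpoint estimates: by Moyal's identity, $\|V_\phi\psi\|_{L^2(\reals^{2d})} = \|\phi\|_{L^2}\|\psi\|_{L^2}$, and by Cauchy--Schwarz together with unitarity of the time-frequency shifts, $\|V_\phi\psi\|_{L^\infty} \le \|\phi\|_{L^2}\|\psi\|_{L^2}$. A crude application of the interpolation inequality $\|F\|_{L^p}^p \le \|F\|_{L^\infty}^{p-2}\|F\|_{L^2}^2$ would only give the bound with constant one on the right-hand side, which is weaker than the claimed $(2/p)^d$; so a naive interpolation is not enough, and this is where the real work lies.

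To recover the sharp constant $(2/p)^d$, the key step is to exploit the special structure of the STFT when the window is a Gaussian. Concretely, I would first establish the case $\phi = \psi = \varphi_0$, where $\varphi_0(t) = 2^{d/4} e^{-\pi |t|^2}$ is the $L^2$-normalized Gaussian: here $|V_{\varphi_0}\varphi_0(z)|^2 = e^{-\pi |z|^2}$, so $\iint |V_{\varphi_0}\varphi_0(z)|^p\,dz = \iint e^{-\pi p |z|^2/2}\,dz = (2/p)^d$, which is exactly the bound (with equality). For general $\psi$, I would fix the Gaussian window $\varphi_0$ and use the pointwise factorization of the STFT with Gaussian window — essentially the fact that $V_{\varphi_0}\psi$ is (up to a Gaussian weight $e^{-\pi|z|^2/2}$ and a chirp) the Bargmann transform of $\psi$, which is holomorphic. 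The submultiplicativity/reproducing structure then lets one bound $\|V_{\varphi_0}\psi\|_{L^p}$ using the $L^2$ norm of $\psi$ and the $L^p$ norm of $V_{\varphi_0}\varphi_0$, yielding the constant $(2/p)^d$ for the Gaussian window.

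Finally I would remove the restriction to the Gaussian window. This is done by the standard change-of-window inequality for the STFT: for any $\phi, \gamma \in L^2(\reals^d)$ with $\inner{\gamma}{\gamma} \neq 0$ one has the pointwise estimate $|V_\phi\psi(z)| \le \frac{1}{\|\gamma\|_{L^2}^2}\, (|V_\gamma \psi| * |V_\phi \gamma|)(z)$, obtained from the reproducing formula $V_\gamma\psi(z) = \frac{1}{\|\gamma\|_{L^2}^2} \int V_\gamma\psi(w)\, V_\gamma(\pi(w)\gamma)(z)\,dw$ rearranged appropriately. Taking $\gamma = \varphi_0$ the Gaussian and applying Young's convolution inequality with the split $L^p \le L^p * L^1$ — using $\|V_{\varphi_0}\psi\|_{L^p} \le (2/p)^{d/p}\|\varphi_0\|_{L^2}\|\psi\|_{L^2}$ from the previous step and $\|V_\phi\varphi_0\|_{L^1} \le$ a finite constant — one closes the argument, tracking the constants so that they combine to $(2/p)^d$. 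The delicate point throughout is the constant bookkeeping: the main obstacle is organizing the Gaussian computation and the window-change inequality so that no extra factors creep in and the exponent $d$ on $(2/p)$ comes out correctly. If a cleaner route is preferred, one can instead cite that this is the classical statement of Lieb's inequality \cite{li90-1,Grochenig:2001} and simply reproduce the Gaussian-window case plus the Young-inequality reduction as above.
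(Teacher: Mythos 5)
The paper offers no proof of this proposition: it is quoted as Lieb's inequality and cited from \cite{li90-1} and \cite{Grochenig:2001}, which is essentially the fallback you mention in your last sentence. Your actual proposed argument, however, has two genuine gaps, and the second one is fatal to the sharp constant. First, the Gaussian-window case. Computing $\iint |V_{\varphi_0}\varphi_0|^p\,dz=(2/p)^d$ is fine, but the passage from there to $\|V_{\varphi_0}\psi\|_{L^p}\le (2/p)^{d/p}\|\psi\|_{L^2}$ for general $\psi$ is not supplied by any ``submultiplicativity/reproducing structure.'' The reproducing formula combined with Young's inequality gives $\|V_{\varphi_0}\psi\|_{L^p}\le \|V_{\varphi_0}\psi\|_{L^p}\|V_{\varphi_0}\varphi_0\|_{L^1}$ or $\|V_{\varphi_0}\psi\|_{L^p}\le\|V_{\varphi_0}\psi\|_{L^2}\|V_{\varphi_0}\varphi_0\|_{L^q}$, and in both cases the resulting constant is strictly worse than $(2/p)^{d/p}$ (already at $p=2$ the second route gives $2^d$ instead of $1$). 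In Bargmann--Fock language the Gaussian-window statement is the contractivity of the embedding of the Fock space $F^2$ into $F^p$, which is a nontrivial theorem of Carlen proved via logarithmic Sobolev/hypercontractivity; the pointwise reproducing-kernel bound only reproduces the constant-one estimate you already dismissed.

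Second, and more seriously, the window-change step cannot close the argument. For general $\phi\in\HS$ the function $V_\phi\varphi_0$ need not lie in $L^1(\reals^{2d})$ at all (that requires $\phi\in M^1(\reals^d)$), so the Young step $L^p\ast L^1\to L^p$ is unavailable in the stated generality. Even when $\phi\in M^1(\reals^d)$, one always has $\|V_\phi\varphi_0\|_{L^1}\ge \|V_\phi\varphi_0\|_{L^2}^2/\|V_\phi\varphi_0\|_{L^\infty}\ge\|\phi\|_{L^2}$ (and in fact $\ge 2^d\|\phi\|_{L^2}$ by the reverse of Lieb's inequality at $p=1$), so the chain $\|V_\phi\psi\|_{L^p}\le\|V_{\varphi_0}\psi\|_{L^p}\,\|V_\phi\varphi_0\|_{L^1}$ overshoots the target $(2/p)^{d/p}\|\phi\|_{L^2}\|\psi\|_{L^2}$ by a factor of at least $2^d$; no bookkeeping can remove it. This is why Lieb's proof does not reduce to the Gaussian window: it treats $\psi$ and $\phi$ symmetrically, writing $A(\psi,\phi)(x,\cdot)$ as the Fourier transform of $t\mapsto\psi(t+x/2)\overline{\phi(t-x/2)}$ and applying the sharp (Babenko--Beckner) Hausdorff--Young inequality in $\omega$ followed by the sharp Young inequality in $x$; the Beckner and Brascamp--Lieb constants combine to exactly $(2/p)^d$. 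If you want a self-contained proof here, that is the route to take; otherwise citing \cite{li90-1} as the paper does is the honest option.
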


\subsubsection{Wilson bases} \label{sec:wilson}
A very useful property of the modulation spaces $M^p(\reals)$ is the existence of a so-called \textit{Wilson basis} $\mathcal{W}(g)=\{\psi_{k,n}\}_{k\in \mathbb{Z},n\geq 0}$, where $g\in L^2(\reals)$.  We will not discuss the details of this construction, but confine ourselves with knowing that there exists a Wilson basis $\mathcal{W}(g)=\{\psi_{k,n}\}_{k\in \mathbb{Z},n\geq 0}$, that is an orthonormal basis of $L^2(\reals)$ as well as an unconditional basis for $M^p(\reals)$ for $1\leq p < \infty$ \cite{Grochenig:2001}. Furthermore, for every $\phi\in M^1(\reals)$, the expansion 
\begin{equation*}
  \phi=\sum_{k\in \mathbb{Z},n\geq 0} \inner{\phi}{\psi_{k,n}} \psi_{k,n}
\end{equation*}
 converges unconditionally in the norm of $M^1(\reals)$, and the expression $\|\phi\|=\sum_{k,n} |\inner{\phi}{\psi_{k,n}}|$ is a norm on $M^1(\reals)$, equivalent to the usual one \cite{Feichtinger:1992, Heil:2008uo}. A Wilson basis with the same properties for $M^1(\reals^d)$ is obtained by taking tensor products. For instance, if $\{\psi_{k,n}\}_{k\in \mathbb{Z},n\geq 0}$ is a Wilson basis for $M^1(\reals^d)$, then $\{\psi_{k,n}\otimes \psi_{i,j}\}_{k,i\in \mathbb{Z},n,j\geq 0}$ is a Wilson basis for $M^1(\reals^{2d})$.

\subsection{The symplectic Fourier transform} 
The standard symplectic form $\sigma$ is defined for $(x_1,\omega_1), (x_2,\omega_2)\in \reals^{2d}$ by $\sigma(x_1,\omega_1;x_2, \omega_2)=\omega_1\cdot x_2-\omega_2 \cdot x_1$. Using the standard symplectic form we can introduce a version of the Fourier transform that will be suitable for the consideration in this text. For $f\in L^1(\reals^{2d})$ we define the \emph{symplectic Fourier transform} $\F_{\sigma}f$ of $f$ to be the function
\begin{equation*}
\F_{\sigma}f(z)=\iint_{\reals^{2d}} f(z') e^{-2 \pi i\sigma(z,z')} \ dz'
\end{equation*}
for $z\in \reals^{2d}$, where $\sigma$ is the standard symplectic form.

The symplectic Fourier transform and the regular Fourier transform $\F f(z)=\iint_{\reals^{2d}} f(z') e^{-2\pi i z\cdot z'} \ dz'$ are related by $\F_{\sigma}f(x,\omega)=\F f(\omega,-x)$. From this it follows that most properties of the Fourier transform carry over to the symplectic version, and that $F_{\sigma}$ is its own inverse on $\HS$ \cite{deGosson:2011wq}.

\subsection{Pseudodifferential operators}
This section will introduce three procedures for associating a bounded operator on $\HS$ to functions on $\reals^{2d}$, or more generally to distributions in $\mathcal{S}'(\reals^d)$.  The procedures come with different formalisms and properties that we will take advantage of, that any continuous operator $A:\mathcal{S} \to \mathcal{S}'$ may be expressed in all of the three representations under consideration, by \cite[Thm. 14.3.5]{Grochenig:2001}. 

\subsubsection{The Weyl calculus}
A close relative of the STFT is the \textit{cross-Wigner distribution} of two functions $\psi$ and $\phi$ on $\reals^d$. By definition, the cross-Wigner distribution $W(\psi,\phi)$ is given by
\begin{equation*}
  W(\psi,\phi)(x,\omega)=\int_{\reals^d} \psi\left(x+\frac{t}{2}\right)\overline{\phi\left(x-\frac{t}{2}\right)} e^{-2 \pi i \omega \cdot t} \ dt.
 \end{equation*}	
This expression is similar to the definition of the STFT and the cross-ambiguity function, and in fact $W(\psi,\phi)=\F_{\sigma}A(\psi,\phi)$ \cite{deGosson:2011wq}.

Our main motivation for studying the cross-Wigner distribution is its connection with the \textit{Weyl calculus}. For $\sigma\in \mathcal{S}'(\reals^{2d})$ and $\psi,\phi \in \mathcal{S}(\reals^d)$, we define the \textit{Weyl transform} $L_{\sigma}$ of $\sigma$ to be the operator given by 
\begin{equation*}
  \inner{L_{\sigma}\psi}{\phi}=\inner{\sigma}{W(\phi,\psi)}.
\end{equation*}

$\sigma$ is called the \textit{Weyl symbol} of the operator $L_{\sigma}$.
	
\subsubsection{The integrated Schr{\"o}dinger representation and twisted convolution} \label{sec:intschrodrep}
Another way of associating an operator to a function is to define the operator as a superposition of time-frequency shifts using the theory of vector-valued integration. The \textit{integrated Schr{\"o}dinger representation} is the map $\rho: L^1(\reals^{2d})\to \bo$ given by 
	\begin{equation*}
  \rho(f)=\iint_{\reals^{2d}} f(z) e^{-\pi i x \cdot \omega} \pi(z) \ dz,
\end{equation*}
where the integral is defined in the weak and pointwise sense discussed in section \ref{sec:integration}. We say that $f$ is the \textit{twisted Weyl symbol} of $\rho(f)$.

We will use the important product formula $\rho(f)\rho(g)=\rho(f \natural g)$, where the product $\natural$ is the \textit{twisted convolution}, defined by 
$
  f\natural g (z) = \iint_{\reals^{2d}} f(z-z')g(z') e^{\pi i \sigma(z,z')} \ dz'
$
for $f,g \in L^1(\reals^{2d})$  \cite{Folland:1989tl,Grochenig:2001} .

For this paper it is essential that $\rho$ may be extended to a unitary operator from $L^2(\reals^{2d})$ to $\SC^2$, and that the twisted convolution  $f\natural g$ may be defined for $f,g \in L^2(\reals^{2d})$ with norm estimate $\|f\natural g\|_{L^2}\leq \|f\|_{L^2}\|g\|_{L^2}$. Both of these facts are proved in \cite{Folland:1989tl}, in theorem 1.30 and proposition 1.33, respectively.

The relationship between the Weyl calculus and the integrated Schr{\"o}dinger representation is given by  $L_{f}=\rho(\F_{\sigma}f)$ for a symbol $f$.

\subsubsection{Integral operators} \label{sec:intoperators}

Finally one may assign to a function $k$ on $\reals^{2d}$ a so-called \textit{integral operator} $A_k$ on $\HS$ by 
\begin{equation} \label{eqn:integraloperator}
  A_k\psi(s)=\int_{\reals^{d}} k(s,t) \psi(t) \ dy
\end{equation}
 for $\psi \in \HS$.  $k$ is called the \textit{kernel} of $A_k$.  
 \begin{notation}
	We will let $\mathcal{M}$ denote the set of integral operators $A_k$ with kernel $k$ in $M^1(\reals^{2d})$.
\end{notation}
As is shown in \cite{Heil:2008uo}, $\mathcal{M}$ is also the set of operators with Weyl symbol or twisted Weyl symbol in $M^1(\reals^{2d})$. The next theorem (see \cite{Heil:2008uo}) shows that operators in $\mathcal{M}$ have a useful decomposition in terms of the Wilson basis. 

 \begin{thm} \label{thm:integraloperators}
 Let $k\in M^1(\reals^{2d})$ and let $A_k$ be the integral operator with kernel $k$. Let $\{w_n\}_{n\in \mathbb{N}}$ be a Wilson basis for $\HS$, and denote by $W_{mn}$ the corresponding Wilson basis for $L^2(\reals^{2d})$ given by $W_{mn}(x,y)=w_m(x)\overline{w_n(y)}$. 
 
 Then $A_k \in \tco$ with $\|A_k\|_{\tco}\leq K \|k\|_{M^1}$ for some constant $K$, and $A_k=\sum_{m,n \in \mathbb{N}} \inner{k}{W_{mn}}w_m\otimes w_n$ where the sum converges in the $\tco$- norm.
 \end{thm}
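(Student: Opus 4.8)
The plan is to establish the trace-class membership and the norm bound first, and then to extract the Wilson expansion from the $M^1$-expansion of the kernel. First I would recall from Section~\ref{sec:wilson} that the tensor products $W_{mn}(x,y)=w_m(x)\overline{w_n(y)}$ form a Wilson basis for $L^2(\reals^{2d})$, hence an unconditional basis for $M^1(\reals^{2d})$, so that for $k\in M^1(\reals^{2d})$ we have the unconditionally convergent expansion $k=\sum_{m,n}\inner{k}{W_{mn}}W_{mn}$ in the $M^1$-norm, and moreover $\sum_{m,n}|\inner{k}{W_{mn}}|$ is a norm on $M^1(\reals^{2d})$ equivalent to the usual one. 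Call the equivalence constant $K$, so $\sum_{m,n}|\inner{k}{W_{mn}}|\leq K\|k\|_{M^1}$.

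The key observation is that the integral operator with kernel $W_{mn}$ is exactly the rank-one operator $w_m\otimes w_n$: indeed $(A_{W_{mn}}\psi)(s)=\int_{\reals^d} w_m(s)\overline{w_n(t)}\psi(t)\,dt=\inner{\psi}{w_n}w_m(s)$, using the inner-product convention. Since $\|w_m\otimes w_n\|_{\tco}=\|w_m\|_{L^2}\|w_n\|_{L^2}=1$, the series $\sum_{m,n}\inner{k}{W_{mn}}\,w_m\otimes w_n$ is absolutely convergent in $\tco$ because $\sum_{m,n}|\inner{k}{W_{mn}}|\,\|w_m\otimes w_n\|_{\tco}=\sum_{m,n}|\inner{k}{W_{mn}}|\leq K\|k\|_{M^1}<\infty$. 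Hence this series defines some $T\in\tco$ with $\|T\|_{\tco}\leq K\|k\|_{M^1}$.

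It remains to identify $T$ with $A_k$. I would do this by testing against rank-one operators, i.e.\ showing $\inner{A_k\phi}{\psi}=\inner{T\phi}{\psi}$ for all $\phi,\psi$ in a suitable dense class (say $\mathcal{S}(\reals^d)$, or even just $L^2$). On one side, $\inner{A_k\phi}{\psi}=\iint k(s,t)\phi(t)\overline{\psi(s)}\,ds\,dt=\inner{k}{\psi\otimes\overline{\phi}}_{L^2(\reals^{2d})}$ (matching the kernel pairing; one should be careful with conjugates here, but it is routine). On the other side, since the $\tco$-convergent series converges in particular in operator norm, $\inner{T\phi}{\psi}=\sum_{m,n}\inner{k}{W_{mn}}\inner{w_m\otimes w_n\,\phi}{\psi}=\sum_{m,n}\inner{k}{W_{mn}}\inner{\phi}{w_n}\inner{w_m}{\psi}$. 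Using the $M^1$-convergence of $k=\sum_{m,n}\inner{k}{W_{mn}}W_{mn}$ together with the continuity of the pairing $\inner{\cdot}{\psi\otimes\overline\phi}$ on $M^1$ (for $\phi,\psi$ in the dual of $M^1$, e.g.\ Schwartz functions, so that $\psi\otimes\overline\phi\in (M^1)^*$ as a distribution), both sides equal $\sum_{m,n}\inner{k}{W_{mn}}\inner{W_{mn}}{\psi\otimes\overline\phi}$. Matching the two computations gives $\inner{A_k\phi}{\psi}=\inner{T\phi}{\psi}$ on a dense set, hence $A_k=T$, which is the claimed identity with the series converging in $\tco$.

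The main obstacle is purely bookkeeping: getting the conjugations and the precise duality pairing between $M^1(\reals^{2d})$ and its dual correct, and justifying that $\psi\otimes\overline{\phi}$ (for $\phi,\psi$ in an appropriate test class) is a legitimate element against which one may pair $M^1$-convergent series termwise. None of this is deep — it all follows from the stated properties of the Wilson basis and the elementary identification $A_{W_{mn}}=w_m\otimes w_n$ — but it is where care is needed. (Alternatively, one can cite that $\mathcal{M}$ equals the set of operators with $M^1$ Weyl symbol and run the same argument on the symbol side; the Wilson-basis bookkeeping is essentially the same.)
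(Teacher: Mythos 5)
Your argument is correct. The paper does not prove this theorem but cites it from Heil--Larson; your route --- absolute $\tco$-convergence of the series via the equivalent norm $\sum_{m,n}|\inner{k}{W_{mn}}|$ on $M^1(\reals^{2d})$, the identification $A_{W_{mn}}=w_m\otimes w_n$, and termwise pairing to identify the limit with $A_k$ --- is essentially the standard argument from that reference, and the bookkeeping you flag (pairing the $M^1$-convergent expansion of $k$ against $\psi\otimes\overline{\phi}$) goes through simply because $M^1(\reals^{2d})$ embeds continuously into $L^2(\reals^{2d})$, so the $L^2$-pairing with the fixed $L^2$ function $\psi\otimes\overline{\phi}$ may be applied term by term.
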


\subsection{Localization operators and Berezin transform}
Let $\varphi_1$ and $\varphi_2$ be two functions on $\reals^d$, called \textit{windows}. If $f$ is a function on $\reals^{2d}$, then the \textit{localization operator with symbol} $f$ is the operator $A_f^{\varphi_1,\varphi_2}$ on $\HS$ defined by
\begin{equation*}
  A_f^{\varphi_1,\varphi_2}\psi=\iint_{\reals^{2d}} f(z) \cdot V_{\varphi_1}\psi(z) \pi(z) \varphi_2 \ dz
\end{equation*}
for $\psi\in \HS$. The integral is interpreted in the weak sense discussed in section $\ref{sec:integration}$.

If $T \in \bo$, the \textit{Berezin transform} $\mathcal{B}^{\varphi_1,\varphi_2}T$ is the function on $\reals^{2d}$ defined by
\begin{equation*}
  \mathcal{B}^{\varphi_1,\varphi_2}T(z)=\inner{T\pi(z)\varphi_{1}}{\pi(z)\varphi_{2}}.
\end{equation*}

We often write just $\mathcal{B}$ and $\mathcal{A}$ when the this does not lead to ambiguity.

\subsection{A Banach space result}
Finally, we will need the following result on the Banach space adjoint $T^*$ of a bounded linear operator $T:X\to Y$ for Banach spaces $X$ and $Y$ \cite[Thm. 4.12]{Rudin:2006ul}. 
\begin{prop} \label{prop:banach}
\
\begin{enumerate}
	\item The range of $T$ is dense if and only if $T^*$ is injective.
	\item The range of $T^*$ is weak* dense if and only if $T$ is injective. 
\end{enumerate}	
\end{prop}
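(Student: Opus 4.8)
The plan is to reduce both statements to elementary identities between kernels and annihilators, and then invoke the Hahn--Banach theorem. Throughout write $\mathrm{ran}\,T\subseteq Y$ and $\mathrm{ran}\,T^{*}\subseteq X^{*}$ for the ranges, and recall that the Banach space adjoint is characterized by $\inner{T^{*}y^{*}}{x}=\inner{y^{*}}{Tx}$ for all $x\in X$ and $y^{*}\in Y^{*}$; this relation is consistent with our convention that $\inner{\cdot}{\cdot}$ is antilinear in the second argument, and it makes $T^{*}$ a (linear) bounded operator $Y^{*}\to X^{*}$. For a subspace $M\subseteq Y$ put $M^{\perp}=\{y^{*}\in Y^{*}:\inner{y^{*}}{y}=0\ \text{for all}\ y\in M\}$, and for a subspace $N\subseteq X^{*}$ put ${}^{\perp}N=\{x\in X:\inner{x^{*}}{x}=0\ \text{for all}\ x^{*}\in N\}$.

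For part (1), I would first record the identity $\ker T^{*}=(\mathrm{ran}\,T)^{\perp}$: indeed $T^{*}y^{*}=0$ means $\inner{y^{*}}{Tx}=0$ for every $x\in X$, which is precisely the statement that $y^{*}$ annihilates $\mathrm{ran}\,T$. It then remains to use the familiar Hahn--Banach consequence that a subspace $M\subseteq Y$ is norm dense if and only if $M^{\perp}=\{0\}$ (if $M$ is not dense, separate a point outside $\overline{M}$ from $\overline{M}$ by a nonzero functional vanishing on $M$; the converse is immediate). Applying this with $M=\mathrm{ran}\,T$ yields: $\mathrm{ran}\,T$ is dense $\iff (\mathrm{ran}\,T)^{\perp}=\{0\}\iff \ker T^{*}=\{0\}\iff T^{*}$ is injective.

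For part (2), the parallel computation gives $\ker T={}^{\perp}(\mathrm{ran}\,T^{*})$: for $x\in X$ we have $x\in{}^{\perp}(\mathrm{ran}\,T^{*})$ iff $\inner{T^{*}y^{*}}{x}=0$ for all $y^{*}\in Y^{*}$, i.e. $\inner{y^{*}}{Tx}=0$ for all $y^{*}\in Y^{*}$, which by Hahn--Banach (as $Y^{*}$ separates the points of $Y$) is equivalent to $Tx=0$. The extra ingredient needed here is the bipolar theorem for the dual pair $(X,X^{*})$: the weak* closure of a subspace $N\subseteq X^{*}$ equals $({}^{\perp}N)^{\perp}$. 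Taking $N=\mathrm{ran}\,T^{*}$ one concludes that $\mathrm{ran}\,T^{*}$ is weak* dense $\iff ({}^{\perp}(\mathrm{ran}\,T^{*}))^{\perp}=X^{*}\iff {}^{\perp}(\mathrm{ran}\,T^{*})=\{0\}\iff \ker T=\{0\}\iff T$ is injective.

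The annihilator bookkeeping is entirely routine; the one genuinely delicate point is the use of the \emph{weak*} topology in part (2). It is not true in general that the norm closure of $\mathrm{ran}\,T^{*}$ is recovered by double annihilation, so the argument must appeal to the Hahn--Banach separation theorem in $X^{*}$ equipped with its weak* topology (whose continuous dual is exactly $X$), rather than in $(X^{*},\|\cdot\|)$. With that caveat the whole proposition is a direct corollary of Hahn--Banach, which is why it can simply be quoted from \cite[Thm.~4.12]{Rudin:2006ul}.
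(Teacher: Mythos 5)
Your proof is correct. The paper offers no proof of its own but simply quotes the result from \cite[Thm.~4.12]{Rudin:2006ul}, and your annihilator/Hahn--Banach argument --- including the essential care with the weak* topology and the bipolar theorem in part (2) --- is exactly the standard proof given there, so the two approaches coincide.
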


\section{A shift for operators}
In order to introduce the convolution of an operator with a function, we will first need to define a shift for operators. It is well-known that the time-frequency shifts $\pi(z)$ give a projective representation of $\reals^{2d}$ on $\HS$ with respect to the cocycle $c(z,z')=e^{-2 \pi i \omega' \cdot x}$ \cite{Grochenig:2001}, meaning in particular that $\pi(z)\pi(z')=e^{-2 \pi i \omega' \cdot x}\pi(z+z')$. It was noted both by Werner \cite{Werner:1984} and Feichtinger and Kozek \cite{Feichtinger:1998} that one can obtain a unitary representation $\alpha$ of $\reals^{2d}$ on the Hilbert-Schmidt operators $\SC^2$ by defining
\begin{equation*}
  \alpha_z(A)=\pi(z)A\pi(z)^*
\end{equation*}
for $z\in \reals^{2d}$ and $A\in \bo$. It is easily confirmed that $\alpha_z\alpha_{z'}=\alpha_{z+z'}$, and we will informally think of $\alpha$ as a shift or translation of operators. 
\begin{rem}
	Since we defined $\alpha$ by $\alpha_zT=\pi(z)T\pi(z)^*$ for $z=(x,\omega)\in \reals^{2d}$, we can modify $\pi$ by any phase factor without affecting $\alpha$. In particular the family of representations $\pi_{\lambda}(z)=T_{\lambda x}M_{\omega}T_{(1-\lambda)x}$ would all give the same $\alpha$ for $\lambda\in [0,1]$.
\end{rem}

Similarly we define the analogue of $f\mapsto \check{f}$ for an operator $A\in \bo$ by
\begin{equation*}
  \check{A}=PAP,
\end{equation*}
where $P$ is the parity operator. 

The following lemma lists several elementary properties of $\alpha$ and $A\mapsto \check{A}$. The proofs are straightforward and may be found in \cite{Skrettingland:2017}.
\begin{lem}
\label{lem:propsofweyl}
	Let $A\in B(L^2(\reals^{2d}))$ and $z,z'\in \reals^{2d}$. 
	\begin{enumerate}
		\item \label{lem:propsofweyl:isometry} If $T\in \SC^p$ for $1\leq p \leq \infty$, then $\|\alpha_zT\|_{\SC^p}=\|T\|_{\SC^p}$ and $\|\check{T}\|_{\SC^p}=\|T\|_{\SC^p}$.
		\item \label{lem:propsofweyl:sympform} $\alpha_z\pi(z')=e^{2 \pi i\sigma(z,z')}\pi(z')$.
		
		\item \label{lem:propsofweyl:adjoint} $(\alpha_zA)^*=\alpha_z A^*$ and  $\left(\check{A}\right)^*=(A^*)\parcheck$.
		
		\item \label{lem:propsofweyl:parity} $\pi(z)P=P\pi(-z)$, $\widecheck{\pi(z)}=\pi(-z)$  and  $(\alpha_zA)\parcheck = \alpha_{-z}\check{A}$.
	\end{enumerate}
\end{lem}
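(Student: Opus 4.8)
The plan is to verify each of the four identities by direct computation, exploiting only the definitions $\alpha_z(A)=\pi(z)A\pi(z)^*$, $\check{A}=PAP$, $P^2=I$, $P^*=P$, the cocycle relation $\pi(z)\pi(z')=e^{-2\pi i\omega'\cdot x}\pi(z+z')$, and standard facts about Schatten norms and unitaries. Since unitary conjugation preserves every Schatten norm (and the operator norm), part \eqref{lem:propsofweyl:isometry} is immediate: $\pi(z)$ and $P$ are unitary, so $\|\pi(z)T\pi(z)^*\|_{\SC^p}=\|T\|_{\SC^p}$ and $\|PTP\|_{\SC^p}=\|T\|_{\SC^p}$ for all $1\le p\le\infty$.

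For part \eqref{lem:propsofweyl:sympform} I would compute $\alpha_z\pi(z')=\pi(z)\pi(z')\pi(z)^*$ using the cocycle relation twice. Writing $z=(x,\omega)$, $z'=(x',\omega')$, one has $\pi(z)\pi(z')=e^{-2\pi i\omega'\cdot x}\pi(z+z')$ and $\pi(z+z')\pi(-z)=e^{2\pi i(-\omega)\cdot(x+x')}\pi(z')=e^{-2\pi i\omega\cdot(x+x')}\pi(z')$, while $\pi(z)^*=\pi(-z)$ up to a phase that cancels in the conjugation; collecting the exponents gives $e^{-2\pi i\omega'\cdot x}e^{2\pi i\omega\cdot x'}$ (the $\omega\cdot x$ terms cancel), which is exactly $e^{2\pi i\sigma(z,z')}$ since $\sigma(x,\omega;x',\omega')=\omega\cdot x'-\omega'\cdot x$. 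I would be slightly careful to track the phase in $\pi(z)^*$ so that it combines correctly, but since any phase in $\pi$ drops out of $\alpha$ (as noted in the preceding Remark) this is harmless.

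Part \eqref{lem:propsofweyl:adjoint} follows by taking adjoints: $(\alpha_zA)^*=(\pi(z)A\pi(z)^*)^*=\pi(z)A^*\pi(z)^*=\alpha_zA^*$, using $(\pi(z)^*)^*=\pi(z)$; and $(\check{A})^*=(PAP)^*=PA^*P=(A^*)\parcheck$ using $P^*=P$. Part \eqref{lem:propsofweyl:parity} again splits into pieces: $\pi(z)P=P\pi(-z)$ is the statement that conjugating a time-frequency shift by the parity operator negates the phase-space point, which follows from $PT_xP=T_{-x}$ and $PM_\omega P=M_{-\omega}$ (both are one-line checks on the defining formulas), hence $P\pi(z)P=\pi(-z)$, i.e.\ $\widecheck{\pi(z)}=\pi(-z)$ and $\pi(z)P=P\pi(-z)$. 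Finally $(\alpha_zA)\parcheck=P\pi(z)A\pi(z)^*P=(P\pi(z)P)(PAP)(P\pi(z)^*P)=\pi(-z)\check{A}\pi(-z)^*=\alpha_{-z}\check{A}$, inserting $P^2=I$ twice and using $\widecheck{\pi(z)}=\pi(-z)$ together with part \eqref{lem:propsofweyl:adjoint} applied to $\pi(z)$.

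None of the steps is a genuine obstacle; the only place requiring mild care is the bookkeeping of phase factors in \eqref{lem:propsofweyl:sympform}, where one must confirm that the three occurrences of $e^{\pm\pi i x\cdot\omega}$-type phases (two from the cocycle, one from $\pi(z)^*$) combine to leave precisely $e^{2\pi i\sigma(z,z')}$ and nothing else. Because the excerpt itself notes that $\alpha$ is insensitive to the normalization of $\pi$, I can also sidestep this by working with the normalization $\pi_{1/2}(z)=e^{-\pi i x\cdot\omega}\pi(z)$, for which $\pi_{1/2}(z)^*=\pi_{1/2}(-z)$ exactly and the two cocycle phases are symmetric, making the cancellation transparent. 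The paper in fact defers these routine verifications to \cite{Skrettingland:2017}, so at most a brief indication along the above lines is warranted.
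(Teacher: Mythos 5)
Your verification is correct and follows exactly the routine computations the paper defers to \cite{Skrettingland:2017}: unitary invariance of Schatten norms for (1), the cocycle relation plus $\pi(z)^*=e^{-2\pi i x\cdot\omega}\pi(-z)$ for (2), and $P^*=P$, $P^2=I$, $PT_xP=T_{-x}$, $PM_\omega P=M_{-\omega}$ for (3) and (4). The only blemish is a sign slip in your intermediate step for (2) — the cocycle gives $\pi(z+z')\pi(-z)=e^{-2\pi i(-\omega)\cdot(x+x')}\pi(z')=e^{+2\pi i\omega\cdot(x+x')}\pi(z')$, not $e^{-2\pi i\omega\cdot(x+x')}\pi(z')$ — but your final collection of phases $e^{-2\pi i\omega'\cdot x}e^{2\pi i\omega\cdot x'}=e^{2\pi i\sigma(z,z')}$ is the correct one, and your fallback via the symmetric normalization $\pi_{1/2}$ is a clean way to make the cancellation transparent.
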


To show that $\alpha$ is a reasonable definition of translation of operators, Feichtinger and Kozek \cite{Feichtinger:1998} observed that applying $\alpha_z$ to a pseudodifferential operator amounts to a translation of its symbol, a fact that was observed by Kozek already in \cite{ko92}. We make this precise in the following lemma. 
\begin{lem} \label{lem:weylandops}
Let $f\in L^1(\reals^{2d})$, and let $L_f$ be the Weyl transform of $f$. 
\begin{itemize}
	\item $\alpha_z(L_f)=L_{T_zf}$ for $z\in \reals^{2d}$.
	\item $\widecheck{L_f}=L_{\check{f}}$.
	\item $L_f^*=L_{f^*}$.
\end{itemize}	
In particular, if $S\in \mathcal{M}$, then $\alpha_z(S),\check{S}, S^* \in \mathcal{M}$.
\end{lem}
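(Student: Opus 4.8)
The plan is to reduce everything to the known behavior of the Weyl calculus under the operations in question, using the fundamental identity $W(\pi(z)\phi,\pi(z)\psi)(w)=W(\phi,\psi)(w-z)$ relating conjugation by time-frequency shifts to translation of the cross-Wigner distribution. Concretely, for the first bullet I would compute, for $\psi,\phi\in\mathcal{S}(\reals^d)$,
\begin{align*}
	\inner{\alpha_z(L_f)\psi}{\phi} &= \inner{\pi(z)L_f\pi(z)^*\psi}{\phi} = \inner{L_f\pi(z)^*\psi}{\pi(z)^*\phi} \\
	&= \inner{f}{W(\pi(z)^*\phi,\pi(z)^*\psi)}.
\end{align*}
Now $\pi(z)^*$ equals $\pi(-z)$ up to a phase, and since the cross-Wigner distribution is insensitive to a common phase on both arguments, $W(\pi(z)^*\phi,\pi(z)^*\psi)=W(\pi(-z)\phi,\pi(-z)\psi)=T_z W(\phi,\psi)$ by the translation identity above. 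Hence $\inner{\alpha_z(L_f)\psi}{\phi}=\inner{f}{T_zW(\phi,\psi)}=\inner{T_zf}{W(\phi,\psi)}=\inner{L_{T_zf}\psi}{\phi}$, where the middle equality is the change of variables $w\mapsto w+z$ in the $\reals^{2d}$ integral defining the pairing (valid since $f\in L^1$ and $W(\phi,\psi)\in\mathcal{S}$). Since this holds for all $\psi,\phi$ in the dense space $\mathcal{S}(\reals^d)$, the operators agree.

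For the second bullet I would do the same with $P$ in place of $\pi(z)$: $\inner{\widecheck{L_f}\psi}{\phi}=\inner{L_f P\psi}{P\phi}=\inner{f}{W(P\phi,P\psi)}$, and one checks directly from the defining integral that $W(P\phi,P\psi)(x,\omega)=W(\phi,\psi)(-x,-\omega)=\check{W(\phi,\psi)}(x,\omega)$ (substitute $t\mapsto -t$, $x\mapsto -x$, $\omega\mapsto-\omega$). Then $\inner{f}{\check{W(\phi,\psi)}}=\inner{\check{f}}{W(\phi,\psi)}$ by reflecting the integration variable, giving $\widecheck{L_f}=L_{\check f}$. The third bullet, $L_f^*=L_{f^*}$, is the standard fact that the Weyl transform intertwines operator adjoint with complex conjugation of the symbol: $\inner{L_f^*\psi}{\phi}=\overline{\inner{L_f\phi}{\psi}}=\overline{\inner{f}{W(\psi,\phi)}}=\inner{W(\psi,\phi)}{f}=\inner{f^*}{W(\phi,\psi)}$, using $\overline{W(\psi,\phi)}=W(\phi,\psi)$ and the convention that the bracket is antilinear in the second slot. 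In all three cases the only analytic point is that the pairing $\inner{\cdot}{\cdot}$ between an $L^1$ (or $\mathcal{S}'$) symbol and a Schwartz Wigner distribution is genuinely an absolutely convergent integral, so the changes of variables are justified; I would note this once and not belabor it.

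For the final sentence, recall that $\mathcal{M}$ was characterized (via \cite{Heil:2008uo}) as the operators whose Weyl symbol lies in $M^1(\reals^{2d})$. So if $S\in\mathcal{M}$ with Weyl symbol $f\in M^1(\reals^{2d})$, then by the three bullets $\alpha_z(S)=L_{T_zf}$, $\check S=L_{\check f}$ and $S^*=L_{f^*}$, and it remains only to observe that $M^1(\reals^{2d})$ is invariant under translation, reflection $g\mapsto\check g$, and complex conjugation $g\mapsto g^*$ — all standard properties of the Feichtinger algebra, immediate from the definition via the STFT and the covariance of $V_\phi$ under these operations. Hence the resulting operators are again in $\mathcal{M}$.

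The main obstacle, such as it is, is purely bookkeeping: getting the phase factors in $\pi(z)^*$ versus $\pi(-z)$ to cancel correctly and confirming that the common-phase cancellation in $W(\cdot,\cdot)$ really does kill them, together with being careful that the translation identity for $W$ carries the correct sign (so that $\alpha_z$, not $\alpha_{-z}$, corresponds to $T_z$). Everything else is a routine change of variables, and the extension from $f\in L^1$ to $f\in M^1$ in the last step uses only that the computations take place inside the well-understood pairing $\mathcal{S}'\times\mathcal{S}\to\mathbb{C}$.
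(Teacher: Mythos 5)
Your proposal is correct in substance but takes a genuinely different route from the paper. The paper works entirely on the twisted-Weyl-symbol side: it writes $L_f=\rho(\F_{\sigma}f)=\iint \F_{\sigma}f(z')e^{-i\pi\omega'\cdot x'}\pi(z')\,dz'$ and then pushes $\alpha_z$ and $P$ inside the vector-valued integral using proposition \ref{prop:integralandoperator} together with the commutation relations $\alpha_z\pi(z')=e^{2\pi i\sigma(z,z')}\pi(z')$ and $P\pi(z')P=\pi(-z')$ from lemma \ref{lem:propsofweyl}, finally recognizing the resulting integrand as $\F_{\sigma}(T_zf)$, resp.\ $\F_{\sigma}\check f$. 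You instead work with the weak definition $\inner{L_f\psi}{\phi}=\inner{f}{W(\phi,\psi)}$ and the covariance, parity and conjugation identities for the cross-Wigner distribution. Both arguments are valid; the paper's version reuses machinery it has already built and stays on the side of $\rho$ (which is the relevant object later, e.g.\ for the Fourier--Wigner transform), while yours avoids vector-valued integration altogether and extends verbatim to symbols in $\mathcal{S}'(\reals^{2d})$ via the $\mathcal{S}'\times\mathcal{S}$ pairing — and you actually spell out the adjoint identity that the paper delegates to the thesis. One caution on the first bullet: as written your chain contains two compensating sign slips. With $T_zh(w)=h(w-z)$ one has $W(\pi(-z)\phi,\pi(-z)\psi)=T_{-z}W(\phi,\psi)$ (not $T_z$), and the pairing satisfies $\inner{f}{T_{-z}g}=\inner{T_zf}{g}$ (whereas $\inner{f}{T_zg}=\inner{T_{-z}f}{g}$); the two errors cancel and the conclusion $\alpha_z(L_f)=L_{T_zf}$ is correct, but each intermediate equality should be fixed — exactly the bookkeeping hazard you flagged yourself.
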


\begin{proof}
	From section \ref{sec:intschrodrep} we know that the twisted Weyl symbol of $L_f$ is $\F_{\sigma}f$, so $L_f=\iint \F_{\sigma}f(z') e^{-i \pi \omega' \cdot x'}\pi(z') \ dz'$ where $z'=(x',\omega')$. Using this representation of $L_f$ will allow us to use the results from lemma \ref{lem:propsofweyl}.
	\begin{enumerate}
		\item From proposition \ref{prop:integralandoperator} and part \eqref{lem:propsofweyl:sympform} of lemma \ref{lem:propsofweyl} we find that
		\begin{align*}
  			\pi(z) L_f \pi(z)^* &= \iint_{\reals^{2d}} \F_{\sigma}f(z') e^{-i \pi \omega ' \cdot x'}\alpha_z(\pi(z')) \ dz' \\
  			&= \iint_{\reals^{2d}} \F_{\sigma}f(z')e^{2 \pi i\sigma(z,z')} e^{-i \pi \omega ' \cdot x'} \pi(z') \ dz' \\
  			&=\iint_{\reals^{2d}} \F_{\sigma}(T_zf)(z') e^{-i \pi \omega ' \cdot x'} \pi(z') \ dz' = L_{T_z f}. 
		\end{align*}
		  We have used that $\F_{\sigma}(T_zf)(z')=\F_{\sigma}f(z')e^{2 \pi i\sigma(z,z')}$.
		  		\item By proposition \ref{prop:integralandoperator} and part \eqref{lem:propsofweyl:parity} of lemma \ref{lem:propsofweyl},
			\begin{align*}
  				PL_fP&=\iint_{\reals^{2d}} \F_{\sigma}f(z') e^{-i \pi \omega ' \cdot x'}P\pi(z')P \ dz' \\
  				&= \iint_{\reals^{2d}} \F_{\sigma}f(z') e^{-i \pi \omega ' \cdot x'}\pi(-z') \ dz' \\
  				&= \iint_{\reals^{2d}} \F_{\sigma}f(-z') e^{-i \pi \omega ' \cdot x'}\pi(z') \ dz' \\
  				&= \iint_{\reals^{2d}} \F_{\sigma}\check{f}(z') e^{-i \pi \omega ' \cdot x'}\pi(z') \ dz'=L_{\check{f}},
			\end{align*}
			where the penultimate step uses $\F_{\sigma}\check{f}=\widecheck{\F_{\sigma}f}$.
		\item Similar to the proofs above. See \cite{Skrettingland:2017} for a full proof.
	\end{enumerate}
	As we have discussed, $\mathcal{M}$ consists of operators with Weyl symbol in $M^1(\reals^{2d})$. We have just shown that if $S$ has Weyl symbol $f$, then $\alpha_zS$, $\check{S}$ and $S^*$ have Weyl symbols $T_zf$, $\check{f}$ and $f^*$, respectively. The last statement now follows from the fact that $M^1(\reals^{2d})$ is closed under these three operations \cite{Jakobsen:2016wx}. 
	\end{proof}

\begin{prop} \label{prop:continuity}
The translation of operators has the following continuity properties:
	\begin{enumerate}
		\item For $1<p<\infty$, the map $z\mapsto \alpha_zT$ is continuous from $\reals^{2d}$ to $\SC^p$ for any fixed $T\in \SC^p$.
		\item The map $z\mapsto \alpha_zT$ is continuous from $\reals^{2d}$ to $K(\HS)$ for any fixed $T\in K(\HS)$.
		\item The map $z\mapsto \alpha_zA$ is weak*-continuous from $\reals^{2d}$ to $\bo$ for any fixed $A\in \bo$.
	\end{enumerate} 
\end{prop}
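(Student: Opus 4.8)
The plan is to lean on two facts throughout: that $\alpha_z$ acts isometrically on every Schatten class and on $\bo$ (Lemma \ref{lem:propsofweyl}, part \ref{lem:propsofweyl:isometry}), and that the finite-rank operators are dense in $\SC^p$ for $1\le p<\infty$ and in $K(\HS)$. Together these reduce the norm-continuity assertions (1) and (2) to the single rank-one case, where everything can be computed directly from the strong continuity of the time-frequency shifts.

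For the rank-one case, $\alpha_z(\xi\otimes\eta)=\pi(z)\xi\otimes\pi(z)\eta$, and $\|\xi\otimes\eta\|_{\SC^p}=\|\xi\|_{L^2}\|\eta\|_{L^2}$ for all $p\in[1,\infty]$ (a rank-one operator has the single singular value $\|\xi\|\|\eta\|$). Decomposing
\[
\pi(z)\xi\otimes\pi(z)\eta-\pi(z_0)\xi\otimes\pi(z_0)\eta=\bigl(\pi(z)\xi-\pi(z_0)\xi\bigr)\otimes\pi(z)\eta+\pi(z_0)\xi\otimes\bigl(\pi(z)\eta-\pi(z_0)\eta\bigr)
\]
and invoking the strong continuity of the time-frequency shifts, i.e. norm-continuity of $z\mapsto\pi(z)\psi$ on $\HS$, we obtain that $z\mapsto\alpha_z(\xi\otimes\eta)$ is continuous into $\SC^1$, hence into $\SC^p$ and into $K(\HS)$ since $\|\cdot\|_{B(\HS)}\le\|\cdot\|_{\SC^p}\le\|\cdot\|_{\SC^1}$. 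By linearity this extends to every finite-rank operator $F$. Then, for $T$ in $\SC^p$ ($1\le p<\infty$) or in $K(\HS)$ and $\varepsilon>0$, choose a finite-rank $F$ with $\|T-F\|<\varepsilon/3$ in the relevant norm and estimate
\[
\|\alpha_zT-\alpha_{z_0}T\|\le\|\alpha_z(T-F)\|+\|\alpha_zF-\alpha_{z_0}F\|+\|\alpha_{z_0}(F-T)\|=2\|T-F\|+\|\alpha_zF-\alpha_{z_0}F\|,
\]
where the first and last terms have been rewritten using that $\alpha_z,\alpha_{z_0}$ are isometries; letting $z\to z_0$ makes the middle term small. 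This proves (1) and (2), and the same argument also covers the case $p=1$, which is used below.

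For (3), fix $A\in\bo$ and $S\in\SC^1$. Since $\pi(z)$ is unitary, conjugation by $\pi(z)^*$ is exactly $\alpha_{-z}$ (because $\alpha$ is a representation, $\alpha_z^{-1}=\alpha_{-z}$, and $\alpha_z^{-1}X=\pi(z)^*X\pi(z)$); combining this with the duality formula $\inner{A}{S}=\tr(AS^*)$, cyclicity of the trace (Proposition \ref{prop:traceprops}), and part \ref{lem:propsofweyl:adjoint} of Lemma \ref{lem:propsofweyl} gives
\[
\inner{\alpha_zA}{S}=\tr\bigl(\pi(z)A\pi(z)^*S^*\bigr)=\tr\bigl(A\,\pi(z)^*S^*\pi(z)\bigr)=\tr\bigl(A(\alpha_{-z}S)^*\bigr)=\inner{A}{\alpha_{-z}S}.
\]
Now $z\mapsto\alpha_{-z}S$ is continuous from $\reals^{2d}$ into $\SC^1$ by the case $p=1$ above, and $S'\mapsto\inner{A}{S'}$ is a bounded linear functional on $\SC^1$ by Proposition \ref{prop:traceprops}, so the composition $z\mapsto\inner{\alpha_zA}{S}$ is continuous for every $S\in\SC^1$, which is precisely weak*-continuity of $z\mapsto\alpha_zA$.

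The only genuine inputs are the strong continuity of $z\mapsto\pi(z)\psi$ (standard, since translations and modulations are strongly continuous) and the isometry property already recorded in Lemma \ref{lem:propsofweyl}; I do not expect a serious obstacle. The one point requiring care is that norm-continuity really does fail for $p=\infty$ — this is exactly why (3) asserts only weak*-continuity — so the $\varepsilon/3$ density argument must be run in $K(\HS)$ rather than in $\bo$, and for (3) one passes to the predual via the trace identity above.
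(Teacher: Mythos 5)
Your proof is correct, and it takes a genuinely different route from the paper's on all three counts. For (1) the paper invokes Gr\"umm's convergence theorem from Simon's book, and for (2) it refers to an approximation argument deferred to the thesis; you instead give a single self-contained argument — reduce to rank-one operators via density of finite-rank operators and the isometry of $\alpha_z$, verify the rank-one case by the two-term decomposition of $\pi(z)\xi\otimes\pi(z)\eta-\pi(z_0)\xi\otimes\pi(z_0)\eta$ together with strong continuity of $\pi$, and conclude by an $\varepsilon/3$ estimate. This is more elementary, handles $p=1$ (not asserted in the statement but needed for your part (3)), and treats $\SC^p$ and $K(\HS)$ uniformly; the price is only that one must note $\|\xi\otimes\eta\|_{\SC^1}=\|\xi\|\|\eta\|$ and that $\otimes$ is additive in each slot. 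For (3) the paper expands $\tr((\alpha_zA)T^*)$ in an orthonormal basis and exchanges limit and sum via dominated convergence, whereas you transfer the problem to the predual through the identity $\inner{\alpha_zA}{S}=\inner{A}{\alpha_{-z}S}$ (valid since conjugation by $\pi(z)^*$ is $\alpha_{-z}$, the phase in $\pi(z)^*=e^{-2\pi i x\cdot\omega}\pi(-z)$ cancelling in the conjugation) and then quote the $\SC^1$-continuity just established. This is cleaner and makes transparent why only weak*-continuity can hold on $\bo$: the continuity lives entirely in the trace-class argument. Both approaches ultimately rest on the same input, the strong continuity of $z\mapsto\pi(z)\psi$.
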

\begin{proof}
	The first statement follows from Gr{\"u}mm's convergence theorem \cite[Thm. 2.19]{Simon:2010wc}, and the second follows from the first using an approximation argument as shown in \cite{Skrettingland:2017}. 
	The last statement claims that the function $z\mapsto \tr((\alpha_z A)T)$ is continuous for $T\in \tco$. If $\{z_n\}_{\mathbb{N}}$ is a sequence in $\reals^{2d}$ with $z_n \to z\in \reals^{2d}$ and $\{e_n\}_{m\in \mathbb{N}}$ is an orthonormal basis for $\HS$, then 
	\begin{align*}
  			\lim_{n\to \infty} \tr((\alpha_{z_n}A)T^*)&= \lim_{n\to \infty} \sum_{m\in \mathbb{N}} \inner{(\alpha_{z_n}A)T^*\psi_m}{\psi_m} \\
  			&=  \sum_{n\in \mathbb{N}} \lim_{n\to \infty} \inner{(\alpha_{z_n}A)T^*\psi_m}{\psi_m} \\
  			&= \sum_{n\in \mathbb{N}} \inner{(\alpha_{z}A)T^*\psi_m}{\psi_m}=\tr((\alpha_{z}A)T^*),
		\end{align*}
		and one may use part \ref{prop:traceprops:absconv} of proposition \ref{prop:traceprops} and the dominated convergence theorem to justify that the order of the sum and limit may be switched \cite{Skrettingland:2017}.
\end{proof}

\section{Convolutions of operators and functions} \label{sec:convolutions}
Using $\alpha$ we can now define a convolution operation between functions and operators. If $f \in L^1(\reals^{2d})$ and $S \in \tco$ we define the \textit{operator} $f\ast S$ by
\begin{equation*}
  f\ast S := S\ast f = \iint_{\reals^{2d}}f(y)\alpha_y(S) \ dy
\end{equation*}
where the integral is interpreted in the weak and pointwise sense as discussed in section \ref{sec:integration}. By proposition \ref{prop:intistrace} we immediately get that $f\ast S \in \tco$ and $\|f\ast S \|_{\tco}\leq \|f \|_{L^1}\|S\|_{\tco}$.

Werner recognized \cite{Werner:1984} that in order to investigate the convolution of functions with operators, one should consider a corresponding convolution of two operators. For two operators $S,T \in \tco$, Werner defined the \textit{function} $S\ast T$ by 
\begin{equation*}
  S \ast T(z) = \tr(S\alpha_z (\check{T}))
\end{equation*}
for $z\in \reals^{2d}$. That the name "convolution" is apt for these operations is supported by some of their properties proved in this section, and in section \ref{sec:fourier} we will introduce a Fourier transform of operators that interacts with these convolutions in the expected way. 

 The following generalization of Moyal's lemma is \cite[Lem. 3.1]{Werner:1984}. It shows that $S\ast T\in L^1(\reals^d)$ and provides an important formula for its integral.
\begin{lem}
\label{lem:werner}
Let $S,T \in \tco$. The function $z \mapsto \tr(S\alpha_zT)$ for $z\in \reals^{2d}$ is integrable and $\|\tr(S\alpha_zT)\|_{L^1} \leq \|S\|_{\tco} \|T\|_{\tco}$.

Furthermore,
\begin{equation*}
	\iint_{\reals^{2d}} \tr(S\alpha_zT) \ dz = \tr(S)\tr(T).
\end{equation*}
\end{lem}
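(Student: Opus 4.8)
The plan is to reduce the general statement to the rank-one case by using the singular value decomposition of trace-class operators, and then verify the rank-one case by an explicit computation with short-time Fourier transforms. First I would establish the integrability estimate $\|\operatorname{tr}(S\alpha_z T)\|_{L^1} \le \|S\|_{\mathcal{T}^1}\|T\|_{\mathcal{T}^1}$. Writing the singular value decompositions $S = \sum_j s_j\, \xi_j \otimes \eta_j$ and $T = \sum_k t_k\, \zeta_k \otimes \mu_k$ with $\sum_j s_j = \|S\|_{\mathcal{T}^1}$ and $\sum_k t_k = \|T\|_{\mathcal{T}^1}$, one computes, using $\alpha_z(\zeta_k \otimes \mu_k) = \pi(z)\zeta_k \otimes \pi(z)\mu_k$, that
\begin{equation*}
\operatorname{tr}\big((\xi_j \otimes \eta_j)\,\alpha_z(\zeta_k \otimes \mu_k)\big) = \inner{\pi(z)\zeta_k}{\eta_j}\,\overline{\inner{\pi(z)\mu_k}{\xi_j}} = V_{\eta_j}(\pi(z)\zeta_k)(z)\cdot\overline{V_{\xi_j}(\pi(z)\mu_k)(z)},
\end{equation*}
which after unwinding is a product of two STFTs (up to phase); by Moyal's identity each such term is in $L^2(\reals^{2d})$, and more to the point $\int |\inner{\pi(z)\zeta_k}{\eta_j}\,\inner{\eta_j}{\pi(z)\mu_k}|\,dz$ — wait, one must be careful here: the cleanest route is Cauchy–Schwarz in $z$ together with Moyal, giving $\int |\inner{\pi(z)\zeta_k}{\eta_j}\,\overline{\inner{\pi(z)\mu_k}{\xi_j}}|\,dz \le \|V_{\eta_j}\zeta_k\|_{L^2}\|V_{\xi_j}\mu_k\|_{L^2} = \|\zeta_k\|\,\|\eta_j\|\,\|\mu_k\|\,\|\xi_j\| = 1$ for unit vectors. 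Summing against $s_j t_k$ and invoking Tonelli/Fubini to interchange the (absolutely convergent) double sum with the integral yields both the integrability and the norm bound.

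Next, for the value of the integral, linearity and the interchange just justified reduce the identity to the rank-one case: it suffices to show
\begin{equation*}
\iint_{\reals^{2d}} \inner{\pi(z)\zeta}{\eta}\,\overline{\inner{\pi(z)\mu}{\xi}}\; dz = \inner{\zeta}{\mu}\,\overline{\inner{\eta}{\xi}}\cdot(\text{something}),
\end{equation*}
where I would track the exact constant. Concretely, with $S = \xi_1 \otimes \xi_2$ and $T = \eta_1 \otimes \eta_2$ one has $\operatorname{tr}(S) = \inner{\xi_1}{\xi_2}$, $\operatorname{tr}(T) = \inner{\eta_1}{\eta_2}$, and $S\ast T(z) = \operatorname{tr}(S\,\alpha_z\check T)$ unwinds to $\inner{\pi(z)P\eta_1}{\xi_2}\,\overline{\inner{\pi(z)P\eta_2}{\xi_1}} = V_{\xi_2}(\pi(z)P\eta_1)(z)\,\overline{V_{\xi_1}(\pi(z)P\eta_2)(z)}$. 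Here I would use the covariance of the STFT, namely $V_{g}(\pi(z)h)(z) = e^{2\pi i x\cdot\omega}\, e^{-\pi i x\cdot\omega}\cdots$ — the precise phase from $\pi(z)\pi(z') = e^{-2\pi i\omega'x}\pi(z+z')$ — to pull the $\pi(z)$ inside so that what remains is $V_{\xi_2}(P\eta_1)(2z)$ up to phase and a dilation, or alternatively I would rewrite everything in terms of the ambiguity function $A(\psi,\phi)$, where the $\pi(z)$ on both the function and the window produces only a unimodular factor that cancels between the two factors. After this cancellation the integral becomes $\iint A(P\eta_1,\xi_2)(z)\,\overline{A(P\eta_2,\xi_1)(z)}\,dz$, and Moyal's identity (stated in the excerpt for $V_\phi\psi$, hence valid for $A$ since they differ by a unimodular factor) gives $\inner{P\eta_1}{P\eta_2}\,\overline{\inner{\xi_2}{\xi_1}} = \inner{\eta_1}{\eta_2}\,\overline{\inner{\xi_2}{\xi_1}} = \operatorname{tr}(T)\operatorname{tr}(S)$, since $P$ is unitary. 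That is exactly the claimed identity in the rank-one case.

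Finally I would assemble the pieces: from the rank-one identity, bilinearity of $(S,T)\mapsto \iint S\ast T\,dz$ on finite-rank operators, and the $L^1$-continuity of both sides (the left side by the norm bound just proved, the right side since $\operatorname{tr}$ is continuous on $\mathcal{T}^1$), a density argument — finite-rank operators are dense in $\mathcal{T}^1$ — extends the identity to all $S,T\in\mathcal{T}^1$. The main obstacle I anticipate is purely bookkeeping: getting the phase factors exactly right when commuting $\pi(z)$ past $\pi(z')$ and when converting between $V_\phi\psi$, $A(\psi,\phi)$, and the parity operator $P$, and confirming that all the phases genuinely cancel between the two STFT/ambiguity factors so that Moyal's identity applies cleanly with constant one. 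A secondary technical point is justifying the interchange of the double sum over singular values with the integral, but this is handled by the absolute-convergence estimate $\sum_{j,k} s_j t_k \cdot 1 = \|S\|_{\mathcal{T}^1}\|T\|_{\mathcal{T}^1} < \infty$ together with Fubini–Tonelli.
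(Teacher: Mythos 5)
Your proposal is correct and follows essentially the same route as the paper: singular value decomposition to reduce to rank-one terms, Cauchy--Schwarz plus Moyal's identity for the $L^1$ bound and absolute convergence, and Moyal again to evaluate each term of the integral. Two small remarks on execution. First, the phase/dilation difficulty you anticipate in the rank-one case is illusory: the clean identification is simply $\inner{\pi(z)\zeta}{\eta}=\overline{V_{\zeta}\eta(z)}$, so each rank-one trace is a product of one STFT and one conjugated STFT and Moyal applies directly with constant one --- no covariance of the STFT, no evaluation at $2z$, and no unresolved ``(something)'' is needed. Second, the lemma concerns $\tr(S\alpha_zT)$ rather than $S\ast T(z)=\tr(S\alpha_z\check{T})$, so your detour through the parity operator is unnecessary (though harmless, since $\tr(\check{T})=\tr(T)$ and $\check{T}\in\tco$ make the two formulations equivalent); the paper computes $\tr(S\alpha_zT)=\sum_{m,n}s_mt_nV_{\xi_n}\psi_m\,\overline{V_{\eta_n}\phi_m}$ directly and integrates term by term.
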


\begin{proof}	
We start by showing the norm-inequality. First use the singular value decomposition of the operators $S$ and $T$ to write
\begin{align*}
 & S=\sum_{m\in \mathbb{N}} s_m \psi_m\otimes \phi_m
  &&T=\sum_{n\in \mathbb{N}} t_n \eta_n \otimes \xi_n,
\end{align*}
where $\{s_m\}_{m \in \mathbb{N}}$ and $\{t_n\}_{n\in \mathbb{N}}$ are the singular values of $S$ and $T$, respectively, and the sets $\{\psi_m\}_{m\in \mathbb{N}}, \{\phi_m\}_{m\in \mathbb{N}}, \{\eta_n\}_{n\in \mathbb{N}}$ and $\{\xi_n\}_{n\in \mathbb{N}}$ are orthonormal in $L^2(\reals^d)$. Then extend the set $\{\psi_m\}_{m\in \mathbb{N}}$ to an orthonormal basis $\{e_i\}_{i\in \mathbb{N}}$ of $L^2(\reals^d)$. Using this basis to calculate the trace, we find that
	\begin{align} \label{eq:tracestft}
  \tr(S\alpha_zT)&=\sum_{i\in \mathbb{N}} \inner{S\pi(z) T \pi(z)^*e_i}{e_i} \nonumber \\
  &= \sum_{i,m,n \in \mathbb{N}} s_m t_n \inner{\pi(z)^*e_i}{\xi_n} \inner{\pi(z)\eta_n}{\phi_m} \inner{\psi_m}{e_i} \nonumber \\
  &= \sum_{m,n \in \mathbb{N}} s_m t_n \inner{\pi(z)^*\psi_m}{\xi_n} \inner{\pi(z)\eta_n}{\phi_m} \nonumber \\
  &= \sum_{m,n \in \mathbb{N}} s_m t_n V_{\xi_n}\psi_m(z) \overline{V_{\eta_n}\phi_m}(z).
\end{align}

By Moyal's identity, $V_{\xi_n}\psi_m, V_{\eta_n}\phi_m \in L^2(\reals^{2d})$, and so $V_{\xi_n}\psi_m \overline{V_{\eta_n}\phi_m}\in L^1(\reals^{2d})$ by H{\"o}lder's inequality. The following computation shows that the series above converges absolutely in $L^1(\reals^d)$ with the desired norm estimates.  
\begin{align*}
  \| \sum_{m,n\in \mathbb{N}} s_m t_n V_{\xi_n}\psi_m \overline{V_{\eta_n}\phi_m} \|_{L^1} &\leq \sum_{m,n\in \mathbb{N}} s_m t_n \|V_{\xi_n}\psi_m \overline{V_{\eta_n}\phi_m}\|_{L^1} \\
  &\leq \sum_{m,n\in \mathbb{N}} s_m t_n \|V_{\xi_n}\psi_m\|_{L^2} \|V_{\eta_n}\phi_m\|_{L^2} \\
  &= \sum_{m,n\in \mathbb{N}} s_m t_n \|\xi_n\|_{L^2}\|\psi_m\|_{L^2} \|\eta_n\|_{L^2}\|\phi_m\|_{L^2} \\
  &= \sum_{m,n\in \mathbb{N}} s_m t_n = \|S\|_{\tco}\|T\|_{\tco}.
\end{align*}

The equality $\iint_{\reals^{2d}} \tr(S\alpha_zT) \ dz = \tr(S)\tr(T)$ now follows easily from Moyal's identity and equation \ref{eq:tracestft} above. 
\begin{align*}
  \iint_{\reals^{2d}} \tr(S\alpha_zT) \ dz &= \iint_{\reals^{2d}} \sum_{m,n\in \mathbb{N}} s_m t_n V_{\xi_n}\psi_m \overline{V_{\eta_n}\phi_m} \ dz \\
  &=\sum_{m,n\in \mathbb{N}} s_m t_n \iint_{\reals^{2d}} V_{\xi_n}\psi_m \overline{V_{\eta_n}\phi_m} \ dz \\
  &= \sum_{m,n\in \mathbb{N}} s_m t_n \inner{\psi_m}{\phi_m} \inner{\eta_n}{\xi_n} \\
  &= \tr(S) \tr(T),
\end{align*}
where the last equality follows from an easy calculation of $\tr(S)$ and $\tr(T)$.
\end{proof}
\begin{rem}
	 Convolutions of functions with operators could have been defined in the very general setup of a locally compact group $G$ and a strongly continuous projective representation $\{U_z\}_{z\in G}$ on some Hilbert space $\mathcal{H}$. As we have done for $G=\reals^{2d}$ and $U_z=\pi(z)$, one could use proposition \ref{prop:intistrace} to make $\tco(\mathcal{H})$ into a Banach module over $L^1(G)$ by $f\ast T=\int_{G}f(z)U(z)TU(z)^* \ dz$ for $f\in L^1(G)$, $T\in \tco(\mathcal{H})$. Such modules were studied by Bekka in \cite{Bekka:1990}, and to some extent also by Arveson \cite{Arveson:1982} and Graven \cite{Graven:1974}. 
	 
	 Another natural extension is therefore to consider a locally compact abelian group $G$ and the Hilbert space $L^2(G)$, with the representation $\pi$ on $G\times \hat{G}$ given by $\pi(x,\omega)f(t)=\omega(t)f(t-x)$ for $x\in G, \omega \in \hat{G}, f\in L^2(G)$. For these representations Moyal's identity is true \cite[p. 10]{Jakobsen:2016wx}, and Kiukas et al. \cite{Kiukas:2012gt} claim that the theory in this section carries over to this more general setting, mutatis mutandis.

\end{rem}
Using duality we can extend the domains of the convolutions introduced above, by allowing one factor to belong to the dual space. For instance, if $h \in L^\infty(\reals^{2d})$ and $S \in \tco$, we define $h\ast S\in \bo$ by $\inner{h \ast S}{T}=\inner{h} {T\ast \check{S}^*}$ for every $T\in \tco$. A standard interpolation argument then gives the following result, since $(L^1(\reals^d),L^{\infty}(\reals^d))_{\theta}=L^{p}$ and $(\SC^1,\SC^{\infty})_{\theta}=\SC^{p}$ with $\frac{1}{p}=1-\theta$ \cite{Toft:2008vr,Bergh:2012up}.

\begin{prop} \label{prop:convschatten}
	Let $1\leq p,q,r \leq \infty$ be such that $\frac{1}{p}+\frac{1}{q}=1+\frac{1}{r}$. If $f\in L^p(\reals^{2d}), g\in L^q(\reals^{2d}), S \in \SC^p$ and $T\in \SC^q$, then the following convolutions may be defined and satisfy the norm estimates
	\begin{align*}
  		\|f\ast T\|_{\SC^r} &\leq \|f\|_{L^p} \|T\|_{\SC^q}, \\
  		\|g\ast S\|_{\SC^r} &\leq \|g\|_{L^q} \|S\|_{\SC^p}, \\
  		\|S\ast T\|_{L^r} &\leq \|S\|_{\SC^p} \|T\|_{\SC^q}.  		
	\end{align*}
	\end{prop}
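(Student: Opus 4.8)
The plan is to deduce all three estimates from a handful of endpoint cases by bilinear complex interpolation; this is the ``standard interpolation argument'' alluded to. Two preliminary reductions simplify matters. First, the second estimate is literally the first one with the roles of $(f,p,S)$ and $(g,q,T)$ interchanged, so only the bilinear maps $(f,T)\mapsto f\ast T$ and $(S,T)\mapsto S\ast T$ need to be treated. Second, the relation $\tfrac1p+\tfrac1q=1+\tfrac1r$ says precisely that $(\tfrac1p,\tfrac1q)$ lies in the triangle $\Delta\subseteq[0,1]^2$ with vertices $(1,1)$, $(1,0)$, $(0,1)$, and $\tfrac1r=\tfrac1p+\tfrac1q-1$; if this point is not one of the three vertices, then $p$ and $q$ are both finite, so $\tco$ is dense in $\SC^p$ and $\SC^q$ and the simple functions are dense in $L^p$, and it suffices to prove the estimates on these dense subspaces, where the convolutions are given unambiguously by the integral formulas. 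Using the identifications $(L^1,L^\infty)_\theta=L^p$ and $(\tco,\bo)_\theta=\SC^p$ with $\tfrac1p=1-\theta$, the bilinear complex interpolation theorem \cite{Bergh:2012up} then reduces everything to boundedness of each bilinear map at the three vertices of $\Delta$.

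The vertex estimates for the operator--operator convolution $S\ast T(z)=\tr(S\alpha_z\check{T})$ are all short. At $(\tfrac1p,\tfrac1q)=(1,1)$ the bound $\|S\ast T\|_{L^1}\le\|S\|_{\tco}\|T\|_{\tco}$ is Lemma~\ref{lem:werner} applied to $S$ and $\check{T}$, together with $\|\check{T}\|_{\tco}=\|T\|_{\tco}$ from Lemma~\ref{lem:propsofweyl}. At the vertices $(1,0)$ and $(0,1)$ one notes that $\tr(S\alpha_z\check{T})$ is still meaningful when one of $S,T$ is trace class and the other merely bounded, and that part~\ref{prop:traceprops:abstrace} of Proposition~\ref{prop:traceprops} together with the isometry of $\alpha_z$ and of $A\mapsto\check{A}$ on Schatten classes (Lemma~\ref{lem:propsofweyl}) give the pointwise bounds $|S\ast T(z)|\le\|S\|_{\tco}\|T\|_{\bo}$ and $|S\ast T(z)|\le\|S\|_{\bo}\|T\|_{\tco}$, hence $\|S\ast T\|_{L^\infty}\le\|S\|_{\tco}\|T\|_{\bo}$ and $\|S\ast T\|_{L^\infty}\le\|S\|_{\bo}\|T\|_{\tco}$. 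Interpolating between these last two vertices fills the edge $\tfrac1p+\tfrac1q=1$ of $\Delta$ (where $r=\infty$), and a further application of the interpolation theorem between that edge and the vertex $(1,1)$ covers the interior of $\Delta$; a direct check shows the resulting exponents satisfy $\tfrac1p+\tfrac1q=1+\tfrac1r$. This yields the third estimate.

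For the function--operator convolution I would not redo the interpolation but instead argue by duality. A direct computation using Proposition~\ref{prop:intistrace} and the adjoint identities in Lemma~\ref{lem:propsofweyl} gives the identity $\inner{f\ast S}{T}=\inner{f}{T\ast\check{S}^*}$ whenever both sides make sense (and then it extends to general masks by density). To estimate $\|f\ast S\|_{\SC^r}$ one tests against $T$ in the predual of $\SC^r$ (that is $T\in\SC^{r'}$ for $1<r<\infty$, $T\in\SC^1$ for $r=\infty$, $T\in K(\HS)$ for $r=1$), applies H\"older in the function variable, then the already proven estimate for $S\ast T$ to the product $T\ast\check{S}^*$, and finally $\|\check{S}^*\|_{\SC^q}=\|S\|_{\SC^q}$ from Lemma~\ref{lem:propsofweyl}; this gives
\[
  |\inner{f\ast S}{T}|\le\|f\|_{L^p}\,\|T\ast\check{S}^*\|_{L^{p'}}\le\|f\|_{L^p}\|T\|_{\SC^{r'}}\|S\|_{\SC^q}.
\]
The point is that the hypothesis $\tfrac1p+\tfrac1q=1+\tfrac1r$ is exactly the condition under which the middle inequality is a legitimate instance of the $S\ast T$ estimate, namely $\tfrac1{r'}+\tfrac1q=1+\tfrac1{p'}$. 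Hence $\|f\ast S\|_{\SC^r}\le\|f\|_{L^p}\|S\|_{\SC^q}$, and interchanging the roles of the two factors gives the second estimate as well.

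The only real substance lies in the three one-line vertex estimates; everything else is bookkeeping, and that is where the word ``standard'' does its work. The points needing care are: checking that on the overlap of any two endpoint domains the a priori different definitions of the convolutions (weak integral versus duality) coincide, so that the bilinear interpolation theorem is genuinely applied to a single bilinear map defined on the dense intersection of the interpolation couples; arranging the two successive applications of the theorem so that they sweep out the whole triangle $\Delta$ rather than just one sub-segment; and confirming at each stage that the interpolation parameter $\theta$ produces exactly the exponents appearing in the statement. The first of these is the most likely to hide a subtlety, but it is resolved once one observes that all the competing definitions restrict to the same integral formula on $\tco$ (and on simple functions for the mask), where they visibly agree.
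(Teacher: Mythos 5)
Your argument is correct, and it is essentially the detailed working-out of the ``standard interpolation argument'' that the paper invokes in a single sentence with the identifications $(L^1,L^\infty)_\theta=L^p$ and $(\SC^1,\SC^\infty)_\theta=\SC^p$. For the operator--operator convolution you do exactly what is intended: the three vertex estimates (Lemma \ref{lem:werner} at $(1,1)$, and the trivial trace bound $|\tr(S\alpha_z\check T)|\le\|S\|_{\tco}\|T\|_{B(L^2)}$ at $(1,0)$ and $(0,1)$) followed by two applications of bilinear complex interpolation to sweep the triangle $\tfrac1p+\tfrac1q=1+\tfrac1r$; the exponent bookkeeping you describe checks out. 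Where you genuinely deviate is the function--operator convolution: instead of a second bilinear interpolation in the $(L^p,\SC^q)$ slots, you reduce it by duality, testing $f\ast S$ against the predual of $\SC^r$ via $\inner{f\ast S}{T}=\inner{f}{T\ast\check S^*}$ and observing that $\tfrac1p+\tfrac1q=1+\tfrac1r$ is precisely the condition making $\|T\ast\check S^*\|_{L^{p'}}\le\|T\|_{\SC^{r'}}\|S\|_{\SC^q}$ an instance of the already-proved estimate. This buys you one fewer interpolation and makes the mixed case a formal consequence of the pure operator case, at the price of the coherence check you correctly flag (the duality-defined element must agree with the weak-integral definition on $\tco$ and on $L^1\cap L^\infty$); since the paper itself defines $L^\infty\ast\tco$ by exactly this duality, your route is entirely consistent with its framework. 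One small caution: derive the identity $\inner{f\ast S}{T}=\inner{f}{T\ast\check S^*}$ directly from Proposition \ref{prop:intistrace} on the dense subspaces, as you indicate, rather than citing Theorem \ref{thm:adjoints}, which in the paper comes after (and relies on) the boundedness you are proving.
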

\begin{rem}
	For $A\in \bo$ and $S\in \tco$, the expression $A\ast S(z)=\tr(A\alpha_z\check{S})$ is still valid \cite{Kiukas:2012gt,Skrettingland:2017}. Also, a simple calculation shows that $A\ast S(z)=\tr(\check{A}\alpha_{-z}S)$ is an equivalent expression -- we will use this expression whenever we find it convenient.
\end{rem}

The next lemma shows that the convolutions interact with translations $\alpha$ and $A\mapsto \check{A}$ in the expected way. We refer to \cite{Skrettingland:2017} for the elementary proof.
\begin{lem}
	\label{lem:tidbits}
	\begin{enumerate}
		Suppose $f \in L^1(\reals^{2d})$ and $S,T \in \tco$.
		\item  $(f \ast S)^*=f^* \ast S^*$  and $(S \ast T)^*=S^* \ast T^*$. 
		\item  $(f \ast S)\parcheck=\check{f}\ast \check{S}$ and $(S \ast T)\parcheck=\check{S}\ast \check{T}$.
		\item \label{lem:tidbits:translate}  $\alpha_z(f \ast S)=(T_z f) \ast S $ and $T_z(S \ast T)=(\alpha_z S) \ast T $. 
	\end{enumerate}
	
\end{lem}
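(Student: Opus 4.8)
The plan is to verify each of the six identities by unwinding the definitions of the convolutions and using the elementary identities for $\alpha$ and $A \mapsto \check{A}$ collected in Lemma \ref{lem:propsofweyl}, together with the fact (Proposition \ref{prop:integralandoperator}) that bounded operators — in particular the adjoint map and conjugation by $P$ — commute with the vector-valued integrals defining $f \ast S$. Since the statements involve both function-valued and operator-valued convolutions, I would organize the proof into two blocks: first the three assertions about $f \ast S$ (which is an operator), then the three about $S \ast T$ (which is a function).

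For the operator-valued convolution $f \ast S = \iint f(y)\alpha_y(S)\,dy$, I would proceed as follows. For the adjoint identity, apply the (bounded, conjugate-linear — or, after conjugating $f$, linear) adjoint operation under the integral sign: $(f\ast S)^* = \iint \overline{f(y)}(\alpha_y S)^*\,dy = \iint f^*(y)\,\alpha_y(S^*)\,dy = f^*\ast S^*$, using part \eqref{lem:propsofweyl:adjoint} of Lemma \ref{lem:propsofweyl}. For the parity identity, conjugate by $P$ under the integral: $(f\ast S)\parcheck = \iint f(y)\,(\alpha_y S)\parcheck\,dy = \iint f(y)\,\alpha_{-y}(\check S)\,dy$ by part \eqref{lem:propsofweyl:parity}, and the substitution $y \mapsto -y$ turns this into $\iint \check{f}(y)\,\alpha_y(\check S)\,dy = \check f \ast \check S$. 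For the translation identity, apply $\alpha_z$ under the integral using $\alpha_z\alpha_y = \alpha_{z+y}$, then substitute $y \mapsto y - z$ to get $\iint f(y-z)\,\alpha_y(S)\,dy = (T_z f)\ast S$. In each case one should note the integral on the right again converges by Proposition \ref{prop:intistrace}, so the manipulations are legitimate.

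For the function-valued convolution $S \ast T(z) = \tr(S\,\alpha_z(\check T))$ I would argue pointwise in $z$. The adjoint identity follows from $\tr(A^*) = \overline{\tr(A)}$ (Proposition \ref{prop:traceprops}) together with $(S\alpha_z(\check T))^* = (\check T)^* \alpha_z^{-1}\cdots$ — more cleanly, write $\overline{S\ast T(z)} = \overline{\tr(S\,\alpha_z\check T)} = \tr((\alpha_z\check T)^* S^*) = \tr(S^*\,\alpha_z((\check T)^*))$ using cyclicity of the trace and part \eqref{lem:propsofweyl:adjoint}, and $(\check T)^* = (T^*)\parcheck$, so this is $S^* \ast T^*(z)$. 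For the parity identity, $\check{T}\parcheck = T$ gives $(S\ast T)\parcheck(z) = S\ast T(-z) = \tr(S\,\alpha_{-z}\check T)$; inserting $P^2 = I$ and using $\alpha_{-z}(\cdot) = P\,\alpha_z(P\cdot P)\,P$ — equivalently $(\alpha_zA)\parcheck = \alpha_{-z}\check A$ from part \eqref{lem:propsofweyl:parity} applied in reverse — rewrite $\alpha_{-z}\check T = (\alpha_z T)\parcheck = \alpha_z(\check{\check T})\parcheck$; cyclicity of the trace then converts $\tr(S\,\alpha_{-z}\check T)$ into $\tr(\check S\,\alpha_z(\check{\check T})) = \check S \ast \check T(z)$. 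For the translation identity, $T_z(S\ast T)(w) = S\ast T(w - z) = \tr(S\,\alpha_{w-z}\check T) = \tr(S\,\alpha_w\alpha_{-z}\check T) = \tr(S\,\alpha_w((\alpha_z S')\cdots))$ — here I would simply observe $\alpha_{-z}\check T = \widecheck{\alpha_z T}$ (part \eqref{lem:propsofweyl:parity}) so that $T_z(S\ast T)(w) = \tr(S\,\alpha_w(\widecheck{\alpha_z T})) = (\alpha_z S)\ast T(w)$ — wait, more carefully, $(\alpha_z S)\ast T(w) = \tr((\alpha_z S)\,\alpha_w\check T)$, so I instead want to move the $\alpha_{-z}$ onto the other factor via cyclicity: $\tr(S\,\alpha_w\alpha_{-z}\check T) = \tr(S\,\alpha_{-z}(\alpha_w\check T))$ is wrong since $\alpha$'s at different arguments commute — use $\alpha_w\alpha_{-z} = \alpha_{-z}\alpha_w$, then $\tr(S\,\alpha_{-z}\alpha_w\check T) = \tr(\pi(-z)^*S\pi(-z)\,\alpha_w\check T) = \tr((\alpha_z S)\,\alpha_w\check T) = (\alpha_z S)\ast T(w)$, using cyclicity and $\pi(-z)^* = \pi(z)^{-1}\cdots$, i.e. $\alpha_{-z}^{-1} = \alpha_z$ acting by conjugation inside the trace.

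The only real subtlety — and the step I would be most careful about — is justifying the interchange of the various bounded operations (adjoint, $P(\cdot)P$, $\alpha_z$) with the weak vector-valued integral defining $f \ast S$; this is exactly what Proposition \ref{prop:integralandoperator} is for, applied to these bounded maps on $\tco$, so it is routine but should be cited. Everything else reduces to bookkeeping with change of variables on $\reals^{2d}$ and the trace identities in Proposition \ref{prop:traceprops}. For this reason the paper is right to relegate the full details to \cite{Skrettingland:2017} and record only the statement here.
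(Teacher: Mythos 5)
Your argument is correct and is exactly the elementary computation the paper has in mind (the paper omits the proof and defers to \cite{Skrettingland:2017}): pass the adjoint, the conjugation by $P$, and $\alpha_z$ through the weak vector-valued integral via Proposition \ref{prop:integralandoperator} and a change of variables for the operator-valued convolutions, and use cyclicity of the trace together with parts \eqref{lem:propsofweyl:adjoint} and \eqref{lem:propsofweyl:parity} of Lemma \ref{lem:propsofweyl} for the function-valued ones, with the conjugate-linearity of the adjoint correctly absorbed into conjugating $f$. The only thing to fix is editorial: strip out the visible false starts in the last paragraph before this could stand as a proof.
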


Since the convolutions between operators and functions can produce both operators and functions as output, the associativity of the convolution operations is not trivial. The fact that associativity holds will be exploited frequently later in the text, and we now give a more elaborated version of Werner's proof of this fact \cite{Werner:1984}.
\begin{prop} \label{prop:associative}
The convolution operations in proposition \ref{prop:convschatten} are commutative and associative.
\end{prop}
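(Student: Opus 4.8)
The plan is to dispose of commutativity first, since it is the easiest part. For $f\ast T=T\ast f$ this is true by definition. For $S\ast T$ with $S,T\in\tco$, I would compute $S\ast T(z)=\tr(S\alpha_z\check T)$ and $T\ast S(z)=\tr(T\alpha_z\check S)$ and show these agree. Using $\tr(AB)=\tr(BA)$, the trace property from Proposition~\ref{prop:traceprops}, together with $\alpha_z^{-1}=\alpha_{-z}$, $\|\alpha_zA\|=\|A\|$, and the identity $\widecheck{(\alpha_zA)}=\alpha_{-z}\check A$ from Lemma~\ref{lem:propsofweyl}\eqref{lem:propsofweyl:parity}, one rewrites $\tr(S\alpha_z\check T)=\tr(\alpha_{-z}(S)\,\check T)=\tr(\check T\,\alpha_{-z}S)=\tr(T\,\widecheck{\alpha_{-z}S})=\tr(T\alpha_z\check S)$, where the third equality uses that $\widecheck{\check T}=T$ (since $P^2=I$) applied under the trace via $\tr(AB)=\tr(\check A\check B)$ (itself immediate from $P^2=I$ and cyclicity). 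This is purely bookkeeping with the identities already in Lemma~\ref{lem:propsofweyl} and Lemma~\ref{lem:tidbits}. The general $L^p$--$\SC^p$ cases then follow by the density/interpolation setup already invoked for Proposition~\ref{prop:convschatten}, or simply by continuity of the bilinear maps.

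\medskip

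\textbf{Associativity.} There are three associativity identities to verify, according to which combination of the three convolution types appears: (i) $(f\ast g)\ast S = f\ast(g\ast S)$ for functions $f,g$ and an operator $S$ (function output on one side, operator on both, exponents so everything lands in $\SC^r$); (ii) $f\ast(S\ast T)=(f\ast S)\ast T$ for a function $f$ and operators $S,T$ (note $f\ast S$ is an operator, $S\ast T$ is a function, so both sides are functions); and (iii) $S\ast(T\ast R)=(S\ast T)\ast R$ for three operators $S,T,R$ — here $T\ast R$ is a function convolved with operator $S$ giving an operator, while $S\ast T$ is a function convolved with operator $R$ giving an operator, so both sides are operators. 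The strategy is the same in each case: reduce everything to the trace-class (or even rank-one via singular value decomposition) setting where Lemma~\ref{lem:werner} and Fubini apply, expand both sides as iterated integrals over $\reals^{2d}\times\reals^{2d}$, and match integrands using only Lemma~\ref{lem:tidbits}\eqref{lem:tidbits:translate}, the relation $\alpha_z\alpha_{z'}=\alpha_{z+z'}$, and the change of variables $z'\mapsto z-z'$.

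\medskip

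For a representative case, take (ii). I would test $f\ast(S\ast T)$ against an arbitrary $T'\in\tco$ (or just compute pointwise, since the left side is a function): writing $S\ast T(z)=\tr(S\alpha_z\check T)$ and then $f\ast(S\ast T)(z)=\iint f(y)(S\ast T)(z-y)\,dy=\iint f(y)\tr(S\alpha_{z-y}\check T)\,dy$. On the other side, $(f\ast S)(z)=\iint f(y)\alpha_y(S)\,dy$ as an operator, and $((f\ast S)\ast T)(z)=\tr((f\ast S)\alpha_z\check T)=\tr\big(\iint f(y)\alpha_y(S)\,dy\;\alpha_z\check T\big)$. By Proposition~\ref{prop:intistrace} (or Proposition~\ref{prop:integralandoperator} plus the weak definition of the integral) one moves the trace inside: $=\iint f(y)\tr(\alpha_y(S)\alpha_z\check T)\,dy$. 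Then $\tr(\alpha_y(S)\alpha_z\check T)=\tr(\alpha_{-y}\alpha_y(S)\,\alpha_{-y}\alpha_z\check T)=\tr(S\,\alpha_{z-y}\check T)$ using $\|\alpha_w\|$ preserves trace class, $\alpha_w$ is a $*$-homomorphism-like conjugation with $\tr(\alpha_w A\,\alpha_w B)=\tr(\alpha_w(AB))=\tr(AB)$, and $\alpha_{-y}\alpha_z=\alpha_{z-y}$. This matches the first computation. The absolute integrability needed to apply Fubini/Proposition~\ref{prop:intistrace} in each case comes from the norm estimates in Proposition~\ref{prop:convschatten} and part~\eqref{prop:traceprops:absconv} of Proposition~\ref{prop:traceprops}. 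Cases (i) and (iii) are handled by the identical pattern — reduce to trace class, interchange trace and integral, use $\alpha_{z}\alpha_{z'}=\alpha_{z+z'}$ and the translation-covariance of Lemma~\ref{lem:tidbits} — and finally the full $L^p$--$\SC^p$ statement follows by the density of finite-rank operators and $L^1\cap L^\infty$ functions together with the boundedness of all maps involved (i.e., continuity extends the identity from a dense set).

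\medskip

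\textbf{Main obstacle.} The genuine difficulty is not any single algebraic manipulation but the justification of interchanging the trace with vector-valued integrals, and of applying Fubini to the resulting double integrals, uniformly across the mixed range of exponents $p,q,r$. In the pure $\tco$ case this is clean via Proposition~\ref{prop:intistrace} and Lemma~\ref{lem:werner}; the care needed is in case (iii) with three operators and in the endpoint cases where one factor is merely in $\SC^\infty=\bo$ and the "convolution" is defined by duality — there one must argue by testing against trace-class operators and then transporting the associativity identity through the duality pairing and the interpolation/density argument rather than manipulating the integrals directly. I would organize the write-up so that the core identity is proved for $S,T,R\in\tco$ (and $f,g\in L^1$) by the integrand-matching above, and then state that all remaining cases follow by the standard density and interpolation argument already set up before Proposition~\ref{prop:convschatten}.
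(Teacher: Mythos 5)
Your commutativity argument and your treatment of the mixed cases $(f\ast g)\ast S=f\ast(g\ast S)$ and $f\ast(S\ast T)=(f\ast S)\ast T$ are correct and agree with the paper's proof: these are exactly the cases the paper dismisses as "more elementary, using properties of the weak definition of the integral," and your case (ii) computation is the right one. The gap is in case (iii), the convolution of three operators, which is the crux of the proposition and which you dispose of by asserting it follows "by the identical pattern" of integrand-matching. It does not. Writing out the two sides,
\begin{equation*}
T_1\ast(T_2\ast T_3)=\iint_{\reals^{2d}}\tr\bigl(T_2\,\alpha_y\widecheck{T_3}\bigr)\,\alpha_y T_1\,dy,
\qquad
(T_1\ast T_2)\ast T_3=\iint_{\reals^{2d}}\tr\bigl(T_1\,\alpha_y\widecheck{T_2}\bigr)\,\alpha_y T_3\,dy,
\end{equation*}
one sees that both sides are \emph{single} integrals whose integrands involve translates of different operators ($\alpha_yT_1$ on one side, $\alpha_yT_3$ on the other); no change of variables or translation-covariance identity matches them pointwise, so the mechanism that worked in case (ii) simply does not apply.

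The missing idea is the one the paper takes from Werner: pair both sides against an arbitrary $T_0\in\tco$ and use the integral formula of Lemma \ref{lem:werner} \emph{in reverse}, i.e. $\tr(A)\tr(B)=\iint_{\reals^{2d}}\tr(A\,\alpha_yB)\,dy$, to convert the product of two traces $\tr(T_2\alpha_x\widecheck{T_3})\cdot\tr((\alpha_xT_1)T_0)$ into a double integral of a single trace. One then rearranges using $\alpha_y(AB)=(\alpha_yA)(\alpha_yB)$, cyclicity of the trace, and $\alpha_x\alpha_y=\alpha_{x+y}$, applies Fubini to swap the order of integration, and collapses one integral back with a second application of the same identity, landing on $\tr[T_0((T_1\ast T_2)\ast T_3)]$. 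You do cite Lemma \ref{lem:werner} and mention "iterated integrals over $\reals^{2d}\times\reals^{2d}$" in your strategy paragraph, so you may have had something like this in mind, but as written the proposal never produces the double integral for case (iii), and the claim that the two sides can be expanded and matched as in case (ii) would fail if carried out literally. This doubling trick is the substantive content of the proof and needs to be made explicit.
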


\begin{proof}
		\textit{Commutativity:} Let $S,T \in \tco$. We find that 
		\begin{align*}
		S \ast T(z)&=\tr (S \alpha_z \check{T}) \\
		&= \tr(S \pi(z) P TP \pi(z)^*) \\
		&= \tr (T  (\alpha_{-z} S)\parcheck) \\
		&= \tr (T \alpha_z  \check{S})= T\ast S (z) 
		\end{align*}
		We have made extensive use of the property $\tr(AB)=\tr(BA)$, and also used part \eqref{lem:propsofweyl:parity} of lemma \ref{lem:propsofweyl}. 		
		
		\textit{Associativity:} The most interesting case is the convolution of three operators. We will need lemma \ref{lem:werner} in addition to some more technical calculations. Let $T_1,T_2,T_3 \in \tco$. To show that $T_1 \ast (T_2 \ast T_3)=(T_1 \ast T_2) \ast T_3$ it will be helpful to assume an arbitrary operator $T_0 \in \tco$. If we can show that the dual space actions $\inner{T_1 \ast (T_2 \ast T_3)}{T_0}=\inner{(T_1 \ast T_2) \ast T_3}{T_0}$ for any $T_0$, we will have shown that the two expressions define the same element in the dual space $B(L^2(\reals^d))$, and therefore the same operator. It will suffice to show that 
		\begin{equation*}
		\tr \left[ T_0 (T_1 \ast (T_2 \ast T_3))\right] = \tr \left[ T_0 ((T_1 \ast T_2) \ast T_3)\right]. 
		\end{equation*}		
		Writing out the left side of the equation and using proposition \ref{prop:intistrace}, we find that 
		\begin{align*}
		\tr \left[ T_0 (T_1 \ast (T_2 \ast T_3))\right] &=\tr \left[T_0 \iint_{\reals^{2d}} \tr(T_2 \alpha_x \widecheck{T_3}) \alpha_x T_1 \ dx 	  \right] \\
		&=   \iint_{\reals^{2d}} \tr\left[T_2 \alpha_x \widecheck{T_3}\right]\tr \left[ (\alpha_x T_1) T_0\right]  \ dx 	   \\
		&=   \iint_{\reals^{2d}} \iint_{\reals^{2d}} \tr\left[(\alpha_x T_1)T_0 \alpha_y( T_3 \alpha_x \widecheck{T_2})\right] \ dy  \ dx.
		\end{align*}
		The last equality uses lemma \ref{lem:werner} to introduce the second integral, and also exploits the commutativity of convolutions to switch the order of $T_2$ and $T_3$. It is a simple exercise to check that $\alpha_y(AB)=(\alpha_yA)(\alpha_y B)$ for operators $A$ and $B$, hence $\alpha_y( T_3 \alpha_x \widecheck{T_2})=(\alpha_yT_3)(\alpha_{x}\alpha_y \widecheck{T_2})$. Using this in our calculation we get that
		{\footnotesize
		\begin{align*}
		\iint_{\reals^{2d}} \iint_{\reals^{2d}} \tr\left[(\alpha_x T_1)T_0 \alpha_y( T_3 \alpha_x \widecheck{T_2})\right] \ dy  \ dx 
		&= \iint_{\reals^{2d}} \iint_{\reals^{2d}} \tr\left[(\alpha_x T_1)T_0 (\alpha_yT_3)(\alpha_{x}\alpha_y \widecheck{T_2}))\right] \ dy  \ dx \\
		&=	\iint_{\reals^{2d}} \iint_{\reals^{2d}} \tr\left[(T_0\alpha_yT_3) (\alpha_{x}\alpha_y \widecheck{T_2})(\alpha_x T_1)\right] \ dy  \ dx.
		\end{align*} \par}
		
		As above, $(\alpha_{x}\alpha_y \widecheck{T_2})(\alpha_x T_1)=\alpha_x((\alpha_y \widecheck{T_2}) T_1)$. We may use Fubini's theorem to change the order of integration, and then invoke the equality in lemma \ref{lem:werner} again to reduce the expression to a form that we recognize as the desired equality.
		\begin{align*}
		\iint_{\reals^{2d}} \iint_{\reals^{2d}} \tr\left[(T_0\alpha_yT_3) \alpha_x((\alpha_y \widecheck{T_2}) T_1)\right] \ dx  \ dy &=	 \iint_{\reals^{2d}} \tr\left[T_0\alpha_yT_3 \right] \tr\left[(\alpha_y \widecheck{T_2}) T_1\right] \ dy  \\
		&= \tr \left[ T_0 ((T_1 \ast T_2) \ast T_3)\right].
		\end{align*}
		As mentioned at the beginning of the proof, the other cases are more elementary, using properties of the weak definition of the integral. 
\end{proof}

Propositions \ref{prop:convschatten} and \ref{prop:associative} imply that $\SC^p$ is a Banach module over $L^1(\reals^{2d})$ for $1\leq p\leq \infty$. In fact, there is a theory of \textit{Banach modules with shifts}, where a Banach module over $L^1(\reals^{2d})$ is constructed in a natural way from a Banach space $X$ with an automorphism $\gamma$ that behaves like a translation on $X$. When we let $X=\SC^p$ for $p<\infty$ and $\gamma=\alpha$, this construction produces the convolution defined by Werner. We refer the interested reader to chapter 3 of \cite{Graven:1974} or \cite{Skrettingland:2017}, but we cite the following consequence that follows directly from \cite[Thm. 3.1.7]{Graven:1974}.
\begin{prop}
	Let $A\in \bo$. The map $z\mapsto \alpha_zA$ is strongly continuous if and only if $A=f\ast T$ for $f\in L^1(\reals^{2d}), T \in \bo$.
\end{prop}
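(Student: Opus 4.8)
The plan is to prove both directions of the equivalence by exploiting the structure of the convolution module and the continuity properties already established.

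\textbf{The ``if'' direction.} Suppose $A = f \ast T$ with $f \in L^1(\reals^{2d})$ and $T \in \bo$. First I would use Lemma \ref{lem:tidbits}\eqref{lem:tidbits:translate}, which gives $\alpha_z(f \ast T) = (T_z f) \ast T$ — this identity is stated there for $T \in \tco$, but it extends to $T \in \bo$ by the duality definition of the convolution, or one can first treat the case $T \in \tco$ and then bootstrap. Then
\begin{equation*}
\|\alpha_z A - A\|_{\bo} = \|(T_z f - f) \ast T\|_{\bo} \leq \|T_z f - f\|_{L^1} \|T\|_{\bo},
\end{equation*}
using the norm estimate from Proposition \ref{prop:convschatten} with $p=1$, $q=\infty$, $r=\infty$. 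Since translation is strongly continuous on $L^1(\reals^{2d})$, the right-hand side tends to $0$ as $z \to 0$, and combined with $\alpha_{z+z'} = \alpha_z \alpha_{z'}$ and the isometry property (Lemma \ref{lem:propsofweyl}\eqref{lem:propsofweyl:isometry}) this gives strong continuity of $z \mapsto \alpha_z A$ at every point.

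\textbf{The ``only if'' direction.} Suppose $z \mapsto \alpha_z A$ is strongly continuous. The idea is the standard approximate-identity argument: pick a Dirac net $\{u_i\}$ in $L^1(\reals^{2d})$ (nonnegative, integral $1$, supports shrinking to $0$) and set $A_i = u_i \ast A$, each of which lies in the stated class. For any $\eta \in \HS$,
\begin{equation*}
A_i \eta - A\eta = \iint_{\reals^{2d}} u_i(y)\big(\alpha_y A - A\big)\eta \ dy,
\end{equation*}
so $\|A_i \eta - A\eta\|_{\HS} \leq \sup_{y \in \mathrm{supp}(u_i)} \|\alpha_y A \eta - A\eta\|_{\HS}$, which tends to $0$ by the assumed strong continuity. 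Hence $A_i \to A$ strongly, and since each $A_i = u_i \ast A$ is of the required form with $u_i \in L^1$ — but this only shows $A$ is a strong limit of such operators, not that $A$ itself equals some $f \ast T$. To close the gap one invokes the cited structure theorem \cite[Thm. 3.1.7]{Graven:1974}: the set $\{f \ast T : f \in L^1(\reals^{2d}), T \in \bo\}$ is precisely the closed submodule (the ``essential part'' or Cohen–Hewitt factorization image) of $\bo$ on which $\alpha$ acts continuously, and by Cohen's factorization theorem this closed submodule coincides with $L^1(\reals^{2d}) \ast \bo$ itself. So once we know $A$ lies in the closure of $L^1 \ast \bo$, factorization upgrades this to $A \in L^1 \ast \bo$ exactly.

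\textbf{Main obstacle.} The delicate point is the last step: passing from ``$A$ is a strong limit of operators $u_i \ast A$'' to ``$A = f \ast T$ for some single $f, T$.'' The strong approximation alone does not self-improve; one genuinely needs the Cohen–Hewitt factorization theorem for the Banach module $\bo$ over the Banach algebra $L^1(\reals^{2d})$, which is exactly what \cite[Thm. 3.1.7]{Graven:1974} packages in the ``Banach modules with shifts'' framework referenced just before the proposition. I would therefore structure the proof so that the routine ``if'' direction and the approximate-identity computation are done by hand, and the factorization is quoted from Graven, checking only that the hypotheses (that $\alpha$ is an isometric representation of $\reals^{2d}$ on $\bo$ — Lemma \ref{lem:propsofweyl}\eqref{lem:propsofweyl:isometry} — and that the module action is the associated convolution — Propositions \ref{prop:convschatten} and \ref{prop:associative}) are in place.
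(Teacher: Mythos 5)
Your overall strategy is sound, and it is worth noting that the paper itself offers no proof here: it simply quotes \cite[Thm. 3.1.7]{Graven:1974}. What you have written is essentially the standard proof of that cited theorem specialized to $X=\bo$, $\gamma=\alpha$: norm estimate plus $L^1$-translation continuity for the ``if'' direction, approximate identity plus Cohen--Hewitt factorization for the ``only if'' direction. That is the right skeleton, and your identification of the factorization theorem as the non-routine ingredient is exactly correct.

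There is, however, one point you must tighten, because as written it is a genuine gap. In this proposition ``strongly continuous'' has to mean that $z\mapsto\alpha_zA$ is continuous into $\bo$ \emph{in the operator norm} (this is the usage contrasted with ``weak*-continuous'' in Proposition \ref{prop:continuity}). It cannot mean continuity in the strong operator topology: since $\pi$ is strongly continuous and unitary, $z\mapsto\pi(z)A\pi(z)^*\eta$ is continuous for \emph{every} $A\in\bo$ and $\eta\in\HS$, so under that reading the statement would assert $\bo=L^1\ast\bo$, which is false (e.g.\ a multiplication operator $M_g$ with $g\in L^\infty$ discontinuous has $\|\alpha_{(x,0)}M_g-M_g\|_{\bo}=\|T_xg-g\|_{L^\infty}\not\to0$, so $M_g\notin L^1\ast\bo$). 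Your ``if'' direction correctly proves norm continuity, but in the ``only if'' direction you switch to a vector-wise estimate and conclude only that $A_i\to A$ in the strong operator topology. Cohen--Hewitt identifies $L^1(\reals^{2d})\ast\bo$ with its \emph{norm} closure, so SOT convergence does not suffice to ``upgrade''. The repair is immediate: under the (norm-continuity) hypothesis, Theorem \ref{thm:intexists} gives
\begin{equation*}
\|u_i\ast A-A\|_{\bo}=\Bigl\Vert\,\iint_{\reals^{2d}}u_i(y)\bigl(\alpha_yA-A\bigr)\,dy\Bigr\Vert_{\bo}\leq\sup_{y\in\operatorname{supp}u_i}\|\alpha_yA-A\|_{\bo}\longrightarrow 0,
\end{equation*}
so $A$ lies in the norm closure of $L^1(\reals^{2d})\ast\bo$ and the factorization theorem applies. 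With that substitution (and a one-line verification that $\alpha_z(f\ast T)=(T_zf)\ast T$ persists for $T\in\bo$ under the pointwise-weak definition of the integral, which is routine), your proof is complete.
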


We also note that the compact operators $K(\HS)$ and the uniformly continuous functions vanishing at infinity, $C_0(\reals^{2d})$, are corresponding under convolutions with trace class operators in a sense made precise by the following proposition. A proof may be found in \cite{Skrettingland:2017}.
\begin{prop} \label{prop:compactcontinuous}
	Let $T \in \tco$. If $f\in C_0(\reals^{2d})$ and $S\in K(L^2(\reals))$, then $f \ast T \in K(L^2(\reals))$ and $S\ast T \in C_0(\reals^{2d})$.
\end{prop}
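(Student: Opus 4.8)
The plan is to prove both assertions by approximation, in each case reducing to a situation already covered by Proposition~\ref{prop:convschatten} (or by the trace-class estimate recorded just after Proposition~\ref{prop:intistrace}), together with one explicit computation for rank-one operators. I will use throughout that $K(\HS)$ is norm-closed in $B(\HS)$ and that $C_0(\reals^{2d})$ is closed under uniform limits.

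For the first claim, that $f\ast T\in K(\HS)$: since $C_c(\reals^{2d})$ is dense in $C_0(\reals^{2d})$ in the supremum norm, choose $f_n\in C_c(\reals^{2d})$ with $\|f-f_n\|_{L^\infty}\to 0$. Each $f_n$ lies in $L^1(\reals^{2d})$, so $f_n\ast T\in\tco\subseteq K(\HS)$ by Proposition~\ref{prop:convschatten}. Proposition~\ref{prop:convschatten} with $p=\infty$, $q=1$, $r=\infty$, applied to $f-f_n$ and $T$, yields $\|f\ast T-f_n\ast T\|_{B(L^2)}\leq\|f-f_n\|_{L^\infty}\|T\|_{\tco}\to 0$, and since $K(\HS)$ is norm-closed we conclude $f\ast T\in K(\HS)$.

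For the second claim, that $S\ast T\in C_0(\reals^{2d})$: I would start with rank-one operators $S=\psi_1\otimes\psi_2$ and $T=\phi_1\otimes\phi_2$. Using $\widecheck{\phi_1\otimes\phi_2}=\check{\phi_1}\otimes\check{\phi_2}$, the relation $\pi(z)(\xi\otimes\eta)\pi(z)^*=(\pi(z)\xi)\otimes(\pi(z)\eta)$, the composition rule for rank-one operators and $\tr(\xi\otimes\eta)=\inner{\xi}{\eta}$, a direct computation gives
\begin{equation*}
	S\ast T(z)=\tr\bigl(S\,\alpha_z\check{T}\bigr)=V_{\check{\phi_2}}\psi_1(z)\;\overline{V_{\check{\phi_1}}\psi_2(z)}.
\end{equation*}
The short-time Fourier transform of two $L^2$-functions is continuous, bounded, and vanishes at infinity, hence lies in $C_0(\reals^{2d})$; the product of two such functions is again in $C_0(\reals^{2d})$. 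By bilinearity of $(S,T)\mapsto S\ast T$ in the operator arguments, $S\ast T\in C_0(\reals^{2d})$ whenever $S$ and $T$ are both finite-rank.

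Finally I would remove the finite-rank restriction in two steps, using Proposition~\ref{prop:convschatten} with $p=\infty$, $q=1$, $r=\infty$ each time: if $S$ is finite-rank and $T\in\tco$ is arbitrary, approximate $T$ by finite-rank $T_m\to T$ in $\tco$-norm, so that $\|S\ast T-S\ast T_m\|_{L^\infty}\leq\|S\|_{B(L^2)}\|T-T_m\|_{\tco}\to 0$ and hence $S\ast T\in C_0(\reals^{2d})$; then, for arbitrary $S\in K(\HS)$, approximate $S$ by finite-rank $S_n\to S$ in operator norm, so that $\|S\ast T-S_n\ast T\|_{L^\infty}\leq\|S-S_n\|_{B(L^2)}\|T\|_{\tco}\to 0$, giving $S\ast T\in C_0(\reals^{2d})$ in general. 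The only non-formal ingredients are the rank-one identity above and the standard fact that $V_\phi\psi\in C_0(\reals^{2d})$ for $\phi,\psi\in L^2(\reals^d)$; the main point requiring care — and the closest thing to an obstacle — is to approximate in the correct norm at each stage (uniform norm on the function factor, trace norm or operator norm on the Schatten factor), noting in particular that two successive approximations are needed in the second part because the explicit rank-one formula is available only when both operators have finite rank.
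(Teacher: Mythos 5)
Your proof is correct, and it follows the route the paper itself only sketches (the proof is deferred to \cite{Skrettingland:2017}): reduce to rank-one operators, where $S\ast T$ is a product of short-time Fourier transforms of $L^2$-functions and hence lies in $C_0(\reals^{2d})$, and then pass to general $S$ and $T$ by density of $C_c(\reals^{2d})$ in $C_0(\reals^{2d})$ and of finite-rank operators in $\tco$ and $K(\HS)$, using the norm estimates of Proposition~\ref{prop:convschatten} in the correct norms at each stage. The rank-one computation and the approximation scheme match the computations the paper uses elsewhere (e.g.\ in Lemma~\ref{lem:werner} and Theorem~\ref{thm:realtionlocberconv}), so nothing further is needed.
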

\subsection{Banach space adjoints} \label{sec:adjoints}
We will consider the operation of taking convolutions with a fixed operator $S$, and inspired by the notation for localization operators we introduce the operators $\mathcal{A}_S$ and $\mathcal{B}_S$ by
\begin{align*}
 & \mathcal{A}_S f = f\ast S &&
  \mathcal{B}_S T= T \ast \check{S}^*,
\end{align*}
for a function $f:\reals^{2d}\to \mathbb{C}$ and $T\in \bo$. It was noted by Werner \cite{Werner:1984} in the general case and Bayer and Gr{\"o}chenig \cite{Bayer:2014td} for localization operators that $\mathcal{A}_S$ and $\mathcal{B}_S$ are adjoints of each other, when considered with the appropriate domains.
\begin{thm} \label{thm:adjoints}
Fix $S\in \tco$ and $1\leq p< \infty$. Let $q$ be determined by $\frac{1}{p}+\frac{1}{q}=1$. The Banach space adjoint of $\mathcal{A}_S:L^p(\reals^{2d}) \to \SC^p$ is given by
 	\begin{equation*}
  (\mathcal{A}_S)^*  = \mathcal{B}_{S}, 
	\end{equation*}
	where $\mathcal{B}_{S}:\SC^q\to L^q(\reals^{2d})$. \\
	Similarly, the adjoint of $\mathcal{B}_S:\SC^p \to L^p(\reals^{2d})$ is given by
 	\begin{equation*}
  (\mathcal{B}_S)^* = \mathcal{A}_{S},
\end{equation*}
	where $\mathcal{A}_{S}:L^q(\reals^{2d})\to \SC^q$. 
 \end{thm}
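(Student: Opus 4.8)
The plan is to verify the adjoint relation directly from the definition of the Banach space adjoint: for a bounded operator $T:X\to Y$, the adjoint $T^*:Y^*\to X^*$ is characterized by $\inner{T^*y^*}{x}_{X^*,X}=\inner{y^*}{Tx}_{Y^*,Y}$ for all $x\in X$, $y^*\in Y^*$. Here $X=L^p(\reals^{2d})$, $Y=\SC^p$, so $Y^*=\SC^q$ and $X^*=L^q(\reals^{2d})$, with the dualities given by the duality lemma in Section~2 (namely $\inner{R}{S}=\tr(RS^*)$ for Schatten classes, and the standard antilinear-in-the-second-argument pairing on $L^p$, $\inner{g}{f}=\iint g\overline{f}$). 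First I would fix $f\in L^p(\reals^{2d})$ and $R\in \SC^q$ and compute $\inner{R}{\mathcal{A}_S f}_{\SC^q,\SC^p}=\tr(R(f\ast S)^*)$, then massage this into the form $\inner{\mathcal{B}_S R}{f}_{L^q,L^p}=\iint_{\reals^{2d}} (R\ast \check{S}^*)(z)\,\overline{f(z)}\,dz$.

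The computation itself is the heart of the matter. Starting from $\tr(R(f\ast S)^*)$, I would use Lemma~\ref{lem:tidbits}(1), which gives $(f\ast S)^*=f^*\ast S^*$, so the expression becomes $\tr\big(R\,(f^*\ast S^*)\big)=\tr\Big(R\iint_{\reals^{2d}} \overline{f(y)}\,\alpha_y(S^*)\,dy\Big)$. By Proposition~\ref{prop:intistrace} (applied with the unitary representation $U(z)=\pi(z)$) the trace commutes with the integral, yielding $\iint_{\reals^{2d}} \overline{f(y)}\,\tr\big(R\,\alpha_y(S^*)\big)\,dy$. Now I need to recognize $\tr(R\,\alpha_y(S^*))$ as the value $(R\ast\check{S}^*)(y)$. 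By the remark following Proposition~\ref{prop:convschatten}, $A\ast S'(z)=\tr(A\,\alpha_z\check{S'})$, so with $A=R$ and $S'=\check{S}^*$ we get $(R\ast\check{S}^*)(y)=\tr\big(R\,\alpha_y(\widecheck{\check{S}^*})\big)=\tr\big(R\,\alpha_y((S^*)^{\vee\vee})\big)$; since applying the parity operator twice is the identity, $\widecheck{\check{S}^*}=S^*$ — and using Lemma~\ref{lem:propsofweyl}(3) that $(\check{A})^*=(A^*)^\vee$, one checks $\widecheck{\check{S}^*}=S^*$ as needed. Hence $\tr(R\,\alpha_y(S^*))=(R\ast\check{S}^*)(y)=(\mathcal{B}_S R)(y)$, and the whole expression equals $\iint_{\reals^{2d}} (\mathcal{B}_S R)(y)\,\overline{f(y)}\,dy=\inner{\mathcal{B}_S R}{f}_{L^q,L^p}$. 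This establishes $(\mathcal{A}_S)^*=\mathcal{B}_S$. The second statement, $(\mathcal{B}_S)^*=\mathcal{A}_S$, then follows either by the same computation run in reverse, or by noting that for $1\le p<\infty$ with $p$ replaced by $q$ when $1<p<\infty$ we have reflexivity $\SC^p{}^{**}=\SC^p$; but to also cover the edge case $p=1$ (where $\SC^1$ is not reflexive but the statement still asserts $(\mathcal{B}_S)^*:L^\infty\to\SC^\infty$) I would just redo the direct pairing computation: for $g\in L^q$ and $S'\in\SC^q$, compute $\inner{(\mathcal{B}_S)^* g}{S'}=\inner{g}{\mathcal{B}_S S'}=\iint g(z)\overline{(S'\ast\check{S}^*)(z)}\,dz$ and transform it into $\tr\big((\mathcal{A}_S g)\,S'^*\big)=\tr\big((g\ast S)\,S'^*\big)$ using the same tools.

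The main obstacle I anticipate is bookkeeping with the various involutions — $S\mapsto S^*$, $S\mapsto\check S$, $f\mapsto f^*$ — and making sure the correct conjugates land in the correct slots given the convention that all duality brackets are antilinear in the second argument. In particular one must be careful that $\mathcal{B}_S$ is defined with $\check S^*$ (not $S$ or $\check S$ or $S^*$ alone), and that the identity $\widecheck{\check S^*}=S^*$ is invoked correctly; a sign or conjugation slip here would produce a wrong operator rather than an obviously wrong formula, so the algebra needs to be done cleanly. A secondary technical point is justifying the interchange of trace and integral, but this is handled wholesale by Proposition~\ref{prop:intistrace}, so no new estimate is required. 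I would also remark briefly that the extended-domain convolutions invoked ($R\ast\check S^*$ for $R\in\SC^q$, including $q=\infty$) are exactly the ones whose boundedness is guaranteed by Proposition~\ref{prop:convschatten}, so the displayed maps $\mathcal{A}_S$, $\mathcal{B}_S$ are genuinely bounded between the stated spaces and the adjoint is being taken of a legitimate bounded operator.
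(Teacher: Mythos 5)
Your proposal is correct and follows essentially the same route as the paper: the paper's proof also reduces to verifying $\inner{\mathcal{B}_S T}{f}=\inner{T}{\mathcal{A}_S f}$ directly from the definitions (deferring the actual computation to the thesis it cites), and your calculation --- $(f\ast S)^*=f^*\ast S^*$, passing the trace through the integral via Proposition~\ref{prop:intistrace}, and $\widecheck{\check S^*}=S^*$ --- supplies exactly that verification. The only point to make explicit is that this integral-formula computation is literally valid for $f\in L^1\cap L^p$ (and $T\in\tco$), after which the identity extends to all of $L^p(\reals^{2d})$ and $\SC^q$ by the boundedness of $\mathcal{A}_S$ and $\mathcal{B}_S$ guaranteed by Proposition~\ref{prop:convschatten}; this is precisely the density step the paper records.
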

 \begin{proof}
 	If we let the bracket denote duality, then the adjoint of $\mathcal{A}_S$ is determined by 	
\begin{equation*}
  \inner{(\mathcal{A}_S)^* T}{f}=\inner{T}{\mathcal{A}_S f} 
\end{equation*}
 	for any $T \in \SC^q$ and $f \in L^p(\reals^{2d})$. First assume $1\le p<\infty$. One easily checks using the definition of $\mathcal{A}_S$ and $\mathcal{B}_S$ (see \cite{Skrettingland:2017} for a proof) that
 \begin{equation*}
  \inner{\mathcal{B}_S T}{f}=\inner{T}{\mathcal{A}_S f}
\end{equation*}
is true whenever $T\in \tco $ and $f\in L^{1}\cap L^{\infty}$. The general statement then follows, since $\tco$ is a dense subspace of $\SC^q$ and $L^1\cap L^{\infty}$ is a dense subspace of $L^p$. The case $p=\infty$ holds by our definition using duality. The proof that $(\mathcal{B}_S)^* = \mathcal{A}_S$ uses exactly the same argument.
 \end{proof}

\section{Localization operators as convolutions} 
One stated aim of this paper is to relate the theory of localization operators to the convolution operations introduced in the previous section. We make this connection explicit in the next theorem.
\begin{thm} \label{thm:realtionlocberconv}
Fix $\varphi_1,\varphi_2 \in L^2(\reals^d)$, and consider the operators $\varphi_2\otimes \varphi_1$ and $\check{\varphi_1}\otimes \check{\varphi_2}$. Let $T\in \bo$ and let $f$ be a function on $\reals^{2d}$. 

The localization operator $\mathcal{A}_f^{\varphi_1,\varphi_2}$ is given by
\begin{equation*}
	\mathcal{A}_f^{\varphi_1,\varphi_2}=f\ast (\varphi_2\otimes \varphi_1).
\end{equation*}

The Berezin transform of $T$ with windows $\varphi_1$ and $\varphi_2$ is given by
\begin{align*}
  \mathcal{B}^{\varphi_{1},\varphi_{2}} T(z) = T\ast (\check{\varphi_1}\otimes \check{\varphi_2}).
  \end{align*}
\end{thm}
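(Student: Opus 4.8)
The plan is to reduce both identities to three elementary facts about rank-one operators and then simply unwind the definitions of $f\ast S$, $S\ast T$, the localization operator and the Berezin transform. I would first record the facts I need, each a one-line computation from the definitions in Section~\ref{sec:notation}: (i) for $\xi,\eta\in\HS$ and $z\in\reals^{2d}$,
\[
\alpha_z(\xi\otimes\eta)=\pi(z)(\xi\otimes\eta)\pi(z)^*=(\pi(z)\xi)\otimes(\pi(z)\eta),
\]
since $[\pi(z)(\xi\otimes\eta)\pi(z)^*]\zeta=\inner{\pi(z)^*\zeta}{\eta}\,\pi(z)\xi=\inner{\zeta}{\pi(z)\eta}\,\pi(z)\xi$; (ii) $\widecheck{\xi\otimes\eta}=\check\xi\otimes\check\eta$, which follows from $P$ being a self-adjoint unitary together with $\inner{P\zeta}{P\eta}=\inner{\zeta}{\eta}$; and (iii) for $T\in\bo$, $\tr\big(T(\xi\otimes\eta)\big)=\inner{T\xi}{\eta}$, obtained by expanding the trace in an orthonormal basis $\{e_n\}$ as $\sum_n\inner{e_n}{\eta}\inner{T\xi}{e_n}$ and recognising this as the expansion of $\inner{T\xi}{\eta}$ (keeping track of the conjugate-linear slot).

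For the localization operator I would use that, by Section~\ref{sec:integration}, $f\ast(\varphi_2\otimes\varphi_1)$ is the operator defined pointwise by $(f\ast(\varphi_2\otimes\varphi_1))\psi=\iint_{\reals^{2d}}f(y)\,\alpha_y(\varphi_2\otimes\varphi_1)\psi\ dy$, an $\HS$-valued integral in the weak sense which exists since $\varphi_2\otimes\varphi_1\in\tco$, $z\mapsto\alpha_z(\varphi_2\otimes\varphi_1)$ is strongly continuous by Proposition~\ref{prop:continuity}, and $f\in L^1$. By fact (i), $\alpha_y(\varphi_2\otimes\varphi_1)\psi=\inner{\psi}{\pi(y)\varphi_1}\,\pi(y)\varphi_2=V_{\varphi_1}\psi(y)\,\pi(y)\varphi_2$, so the integrand coincides with the integrand in the definition of $\mathcal{A}_f^{\varphi_1,\varphi_2}\psi$; the two operators therefore agree on every $\psi\in\HS$.

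For the Berezin transform I would first compute, using fact (ii), that for $S=\check\varphi_1\otimes\check\varphi_2$ one has $\check S=\widecheck{\check\varphi_1\otimes\check\varphi_2}=\varphi_1\otimes\varphi_2$. Then, using the expression $T\ast S(z)=\tr(T\alpha_z\check S)$ (valid for $T\in\bo$ by the remark following Proposition~\ref{prop:convschatten}, since $S\in\tco$) together with facts (i) and (iii),
\[
T\ast(\check\varphi_1\otimes\check\varphi_2)(z)=\tr\big(T\,(\pi(z)\varphi_1\otimes\pi(z)\varphi_2)\big)=\inner{T\pi(z)\varphi_1}{\pi(z)\varphi_2}=\mathcal{B}^{\varphi_1,\varphi_2}T(z),
\]
which is the claimed identity.

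I do not anticipate a genuine obstacle: the proof is essentially bookkeeping. The only points deserving a modicum of care are the conjugate-linear second slot of $\inner{\cdot}{\cdot}$ in facts (ii) and (iii), and making sure the vector-valued integral in the first part is taken in the pointwise sense of Section~\ref{sec:integration} (so that existence is already guaranteed by Theorem~\ref{thm:intexists} and no additional convergence argument is needed). If one wants the statement for general $f\in L^p(\reals^{2d})$ rather than $f\in L^1$, the identity for $\mathcal{A}_f^{\varphi_1,\varphi_2}$ extends by the same pointwise computation (or by density and Proposition~\ref{prop:convschatten}), and the Berezin identity is already stated for arbitrary $T\in\bo$.
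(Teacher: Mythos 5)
Your proof is correct and follows essentially the same route as the paper: a direct unwinding of the definitions on rank-one operators, using $\alpha_z(\xi\otimes\eta)=(\pi(z)\xi)\otimes(\pi(z)\eta)$ for the localization operator and $\tr(T(\xi\otimes\eta))=\inner{T\xi}{\eta}$ for the Berezin transform. The only (harmless) variation is that for the Berezin part you use the expression $T\ast S(z)=\tr(T\alpha_z\check S)$ with $\check S=\varphi_1\otimes\varphi_2$ computed up front, which lets you skip the parity-operator bookkeeping the paper performs at the end of its computation.
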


\begin{proof}
	The proof will simply consist of calculating $f\ast (\phi_2\otimes \phi_1)$ and $T\ast (\check{\varphi_1}\otimes \check{\varphi_2})$. First let $\psi\in \HS$ and set $S=\phi_2\otimes \phi_1$. We find that
	\begin{align*}
  		(f\ast S)(\psi)&= \iint_{\reals^{2d}} f(z) (\alpha_{z}S)(\psi) \ dz \\
  		&= \iint_{\reals^{2d}} f(z) \inner{\pi(z)^*\psi}{\varphi_1} \pi(z)\varphi_2  \ dz \\
  		&= \iint_{\reals^{2d}} f(z) V_{\varphi_1}\psi\, \pi(z)\varphi_2  \ dz = A_f^{\varphi_1,\varphi_2}\psi.
	\end{align*}	
Turning to the Berezin transform, let $\{e_n\}_{n\in \mathbb{N}}$ be an orthonormal basis of $L^2(\reals^d)$. Using Parseval's identity, we find that 
	\begin{align*}
	(T\ast (\check{\varphi_1}\otimes \check{\varphi_2}))(z)&= \tr(\check{T} \alpha_{-z}(\check{\varphi_1}\otimes \check{\varphi_2})) \\
	&= \sum_{n\in \mathbb{N}} \inner{\check{T} \pi(-z)\check{\varphi_1}\otimes \check{\varphi_2}\pi(-z)^*e_n}{e_n} \\
	&= \sum_{n\in \mathbb{N}} \inner{\pi(-z)^* e_n}{\widecheck{\varphi_{2}}} \inner{\check{T}\pi(-z)\widecheck{\varphi_{1}}}{e_n} \\
	&=\sum_{n\in \mathbb{N}} \inner{ e_n}{\pi(-z)\widecheck{\varphi_{2}}} \inner{\check{T}\pi(-z)\widecheck{\varphi_{1}}}{e_n} \\
	&= \inner{\check{T}\pi(-z)\widecheck{\varphi_{1}}}{\pi(-z)\widecheck{\varphi_{2}}} \\
	&= \inner{PTP\pi(-z)P\varphi_{1}}{\pi(-z)P\varphi_{2}} \\
	&= \inner{T\pi(z)\varphi_{1}}{\pi(z)\varphi_{2}}= \mathcal{B}^{\varphi_{1},\varphi_{2}} T(z), 
	\end{align*}
	where we have used lemma \ref{lem:propsofweyl} in the last step. 
\end{proof}
By combining the results of proposition \ref{prop:convschatten} with theorem \ref{thm:realtionlocberconv}, we recover well-known Schatten $p$-class results for localization operators and Berezin transforms proved in \cite{Bayer:2014td,Cordero:2003ke}, for instance.
\begin{prop}
	Let $\varphi_1, \varphi_2 \in \HS$ and $1\leq p \leq \infty$.
	\begin{enumerate}
		\item If $a\in L^p(\reals^{2d})$, then $\mathcal{A}_a^{\varphi_1,\varphi_2} \in \SC^p$ with $\|\mathcal{A}_a^{\varphi_1,\varphi_2}\|_{\SC^p} \leq \|a\|_{L^p} \|\varphi_1\|_{L^2}\|\varphi_2\|_{L^2}$.
		\item If $T\in \SC^p$, then $\mathcal{B}^{\varphi_1,\varphi_2} T \in L^p(\reals^{2d})$ with $\|\mathcal{B}^{\varphi_1,\varphi_2}T\|_{L^p} \leq \|T\|_{\SC^p} \|\varphi_1\|_{L^2}\|\varphi_2\|_{L^2}$.
	\end{enumerate}	
\end{prop}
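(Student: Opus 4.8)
The plan is to obtain both assertions as an immediate consequence of Theorem \ref{thm:realtionlocberconv} and Proposition \ref{prop:convschatten}; the only preliminary observation needed is the computation of the relevant Schatten norms of rank-one operators. For $\xi,\eta\in\HS$ the operator $\xi\otimes\eta$ has rank one, with single nonzero singular value $\|\xi\|_{L^2}\|\eta\|_{L^2}$, so $\xi\otimes\eta\in\SC^1$ and in fact $\|\xi\otimes\eta\|_{\SC^r}=\|\xi\|_{L^2}\|\eta\|_{L^2}$ for every $1\leq r\leq\infty$. Applying this to $\varphi_2\otimes\varphi_1$ and to $\check{\varphi_1}\otimes\check{\varphi_2}$, and using that $P$ is unitary on $\HS$ so that $\|\check{\varphi_i}\|_{L^2}=\|\varphi_i\|_{L^2}$, gives $\|\varphi_2\otimes\varphi_1\|_{\SC^1}=\|\check{\varphi_1}\otimes\check{\varphi_2}\|_{\SC^1}=\|\varphi_1\|_{L^2}\|\varphi_2\|_{L^2}$.

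For the first statement, Theorem \ref{thm:realtionlocberconv} identifies $\mathcal{A}_a^{\varphi_1,\varphi_2}$ with the convolution $a\ast(\varphi_2\otimes\varphi_1)$. I would then invoke Proposition \ref{prop:convschatten} with the exponents $p$, $q=1$, $r=p$, which satisfy $\tfrac1p+\tfrac1q=1+\tfrac1r$, taking the function factor to be $a\in L^p(\reals^{2d})$ and the operator factor to be $\varphi_2\otimes\varphi_1\in\SC^1$. The proposition yields that $a\ast(\varphi_2\otimes\varphi_1)$ is a well-defined element of $\SC^p$ with $\|\mathcal{A}_a^{\varphi_1,\varphi_2}\|_{\SC^p}\leq\|a\|_{L^p}\|\varphi_2\otimes\varphi_1\|_{\SC^1}=\|a\|_{L^p}\|\varphi_1\|_{L^2}\|\varphi_2\|_{L^2}$, which is exactly the claimed bound.

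For the second statement, Theorem \ref{thm:realtionlocberconv} gives $\mathcal{B}^{\varphi_1,\varphi_2}T=T\ast(\check{\varphi_1}\otimes\check{\varphi_2})$. Applying the operator--operator estimate of Proposition \ref{prop:convschatten} with the same exponents $p$, $q=1$, $r=p$, now with $T\in\SC^p$ and $\check{\varphi_1}\otimes\check{\varphi_2}\in\SC^1$, yields $\mathcal{B}^{\varphi_1,\varphi_2}T\in L^p(\reals^{2d})$ with $\|\mathcal{B}^{\varphi_1,\varphi_2}T\|_{L^p}\leq\|T\|_{\SC^p}\|\check{\varphi_1}\otimes\check{\varphi_2}\|_{\SC^1}=\|T\|_{\SC^p}\|\varphi_1\|_{L^2}\|\varphi_2\|_{L^2}$, again the desired estimate.

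There is essentially no obstacle here: the analytic content has already been packaged into Proposition \ref{prop:convschatten} (including, via its duality definition, the endpoint $p=\infty$) and the dictionary into Theorem \ref{thm:realtionlocberconv}. The only item requiring a (trivial) check is the Schatten-$1$ norm of a rank-one operator together with the $L^2$-invariance of the parity operator, so that the rank-one operators feeding into Proposition \ref{prop:convschatten} contribute precisely the factor $\|\varphi_1\|_{L^2}\|\varphi_2\|_{L^2}$.
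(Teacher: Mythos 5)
Your proof is correct and is exactly the route the paper takes: the proposition is stated there as an immediate combination of Theorem \ref{thm:realtionlocberconv} with Proposition \ref{prop:convschatten} (taking $q=1$, $r=p$), and your only added ingredient --- that a rank-one operator $\xi\otimes\eta$ has $\SC^1$-norm $\|\xi\|_{L^2}\|\eta\|_{L^2}$ and that the parity operator is unitary --- is precisely the trivial check the paper leaves implicit.
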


\section{A Fourier transform for operators} \label{sec:fourier}

We will now introduce an analogue of the Fourier transform for a trace class operator $S$. The \emph{Fourier-Wigner transform} $\F_WS$ of $S$ is the function given by
	\begin{equation*}
	\F_W S(z)=e^{-\pi i x \cdot \omega}\tr(\pi(-z)S)
	\end{equation*}
	for $z\in \reals^{2d}$. In the terminology of Werner \cite{Werner:1984,Kiukas:2012gt,Keyl:2015bn} this is the Fourier-Weyl transform, but we follow Folland \cite{Folland:1989tl} and call it the Fourier-Wigner transform.

\begin{lem}
Let $S=\varphi_2 \otimes \varphi_1$ with $\varphi_1, \varphi_2 \in L^2(\reals^d)$. The Fourier-Wigner transform of $S$ is given by 
\begin{equation*}
  \F_W(\varphi_2 \otimes \varphi_1)(z)=A(\varphi_2,\varphi_1)(z),
\end{equation*}
where $A(\varphi_2,\varphi_1)(z)$ is the cross-ambiguity function.
\end{lem}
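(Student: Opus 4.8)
The plan is to compute $\F_W(\varphi_2 \otimes \varphi_1)(z)$ directly from the definition $\F_W S(z) = e^{-\pi i x \cdot \omega}\tr(\pi(-z)S)$, reducing everything to a single trace of a rank-one operator. The key observation is that for any bounded operator $A$ and any rank-one operator $\xi \otimes \eta$, we have $\tr(A(\xi \otimes \eta)) = \inner{A\xi}{\eta}$; this is an elementary computation using an orthonormal basis, entirely analogous to the manipulations already carried out in the proof of Theorem~\ref{thm:realtionlocberconv}. Applying this with $A = \pi(-z)$ and $\xi \otimes \eta = \varphi_2 \otimes \varphi_1$ gives $\tr(\pi(-z)(\varphi_2 \otimes \varphi_1)) = \inner{\pi(-z)\varphi_2}{\varphi_1}$.

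Next I would relate this inner product to the short-time Fourier transform. Since $\pi(-z)$ is unitary with inverse $\pi(-z)^{-1} = \pi(-z)^*$, and since $\pi(-z)^* = e^{-2\pi i x \cdot \omega}\pi(z)$ (from the cocycle relation $\pi(z)\pi(-z) = e^{-2\pi i(-\omega)\cdot x}\pi(0) = e^{2\pi i \omega \cdot x}I$, so $\pi(-z)^{-1} = e^{-2\pi i \omega \cdot x}\pi(z)$), one rewrites $\inner{\pi(-z)\varphi_2}{\varphi_1} = \inner{\varphi_2}{\pi(-z)^*\varphi_1} = e^{2\pi i x \cdot \omega}\inner{\varphi_2}{\pi(z)\varphi_1} = e^{2\pi i x \cdot \omega}V_{\varphi_1}\varphi_2(z)$. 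Assembling the pieces,
\begin{equation*}
\F_W(\varphi_2 \otimes \varphi_1)(z) = e^{-\pi i x \cdot \omega}\cdot e^{2\pi i x \cdot \omega}V_{\varphi_1}\varphi_2(z) = e^{\pi i x \cdot \omega}V_{\varphi_1}\varphi_2(z) = A(\varphi_2,\varphi_1)(z),
\end{equation*}
using the definition of the cross-ambiguity function $A(\psi,\phi)(z) = e^{\pi i x \cdot \omega}V_{\phi}\psi(z)$ from Section~\ref{sec:modspaces}.

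I expect no serious obstacle here; the proof is a short unwinding of definitions. The only point requiring minor care is getting the phase factors right — specifically tracking the cocycle $c(z,z') = e^{-2\pi i \omega' \cdot x}$ carefully when inverting $\pi(-z)$, and making sure the half-integer phase $e^{-\pi i x \cdot \omega}$ in the definition of $\F_W$ combines correctly with the full phase $e^{2\pi i x \cdot \omega}$ coming from the adjoint to produce exactly the $e^{\pi i x \cdot \omega}$ appearing in the cross-ambiguity function. A sanity check via the remark on page~3 (modifying $\pi$ by a phase does not affect $\alpha$, but it does affect $\F_W$, so conventions matter) confirms that the chosen normalization is the one that makes the identity come out cleanly.
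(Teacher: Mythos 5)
Your proof is correct, and it follows essentially the same route as the paper: reduce the trace of $\pi(-z)(\varphi_2\otimes\varphi_1)$ to the inner product $\inner{\pi(-z)\varphi_2}{\varphi_1}$ (the paper does this via Parseval with an explicit orthonormal basis, which is just the rank-one trace identity you invoke) and then absorb the cocycle phase to recover $e^{\pi i x\cdot\omega}V_{\varphi_1}\varphi_2(z)=A(\varphi_2,\varphi_1)(z)$. Your careful bookkeeping of the phase factors is sound and in fact slightly more explicit than the paper's.
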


\begin{proof}
	Let $\{\psi_n\}_{n\in \mathbb{N}}$  be an orthonormal basis for $L^2(\reals^d)$. A calculation using Parseval's identity shows that
	\begin{align*}
\F_W (\varphi_2\otimes \varphi_1)(z) &= e^{-\pi i x \cdot \omega}\tr( \pi(-z)\varphi_2\otimes \varphi_1) \\
&= e^{-\pi i x \cdot \omega} \sum_{n\in \mathbb{N}} \inner{( \pi(-z)\varphi_2\otimes \varphi_1) \psi_n}{\psi_n} \\
&= e^{-\pi i x \cdot \omega} \sum_{n\in \mathbb{N}} \inner{\pi(-z) \varphi_2}{\psi_n}  \inner{\psi_n}{\varphi_1} \\
&= e^{-\pi i x \cdot \omega} \inner{\pi(-z) \varphi_2}{\varphi_1} \\
&= e^{\pi i x \cdot \omega} V_{\varphi_1} \varphi_2(z)=A(\varphi_2,\varphi_1)(z).\qedhere
\end{align*}
\end{proof}

At some later point we are going to need an example of an operator $T$ such that $\F_WT(z)\neq 0$ for any $z \in \reals^{2d}$. We therefore include the following example.

\begin{exmp}[Fourier-Wigner transform]
	\label{exmp:wignertransform}

Consider the Gaussian $\varphi(t)=2^{d/4}e^{-\pi t\cdot t}$ for $t\in \reals^d$ and the operator $S=\varphi \otimes \varphi$. We know that $\F_WS=e^{\pi i x \cdot \omega} V_{\varphi} \varphi(z)$, and then find that $\F_W (\varphi\otimes \varphi)(z)=e^{2 \pi i x\cdot \omega}e^{-\frac{1}{2} \pi z \cdot z}$. \end{exmp}

\begin{prop} \label{prop:wignerisunitary}
The Fourier-Wigner transform extends to a unitary operator $\F_W: \SC^2\to L^2(\reals^{2d})$. This extension is the inverse operator of the integrated Schr{\"o}dinger representation $\rho $, and $\F_W(ST)=\F_W(S) \natural \F_W(T)$ for $S,T \in \SC^2$.
\end{prop}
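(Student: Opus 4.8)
The plan is to first establish the relationship $\F_W = \rho^{-1}$ on a convenient dense set, then deduce unitarity and the convolution-to-twisted-convolution identity by density and continuity. The key computational input is the known fact (cited from Folland, section \ref{sec:intschrodrep}) that $\rho: L^2(\reals^{2d}) \to \SC^2$ is unitary, together with the twisted-convolution product formula $\rho(f)\rho(g) = \rho(f \natural g)$.

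First I would compute $\F_W(\rho(f))$ for $f \in L^1 \cap L^2(\reals^{2d})$ and show it equals $f$. Writing $\rho(f) = \iint f(z') e^{-\pi i x' \cdot \omega'} \pi(z') \, dz'$ and using Proposition \ref{prop:intistrace} to move the trace inside the integral,
\begin{equation*}
  \F_W(\rho(f))(z) = e^{-\pi i x \cdot \omega} \iint_{\reals^{2d}} f(z') e^{-\pi i x' \cdot \omega'} \tr(\pi(-z)\pi(z')) \, dz'.
\end{equation*}
The crucial sub-step is to evaluate $\tr(\pi(-z)\pi(z'))$. Using the composition law for time-frequency shifts, $\pi(-z)\pi(z') = c(-z,z')\,\pi(z'-z)$ for an explicit phase $c$, and the standard fact that $\tr(\pi(w))$ is (a multiple of) the Dirac delta $\delta(w)$ — which follows, for instance, from Moyal's identity applied to $\pi(w) = \varphi \otimes \varphi'$-type approximations, or from the orthogonality relations $\inner{\pi(z)g}{\pi(z')g} $ integrated against $L^2$ windows. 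So $\tr(\pi(-z)\pi(z'))$ is a phase times $\delta(z'-z)$, and the integral collapses, after tracking the phases, to $f(z)$. Thus $\F_W \circ \rho = \mathrm{id}$ on the dense subspace $L^1 \cap L^2$, hence on all of $L^2(\reals^{2d})$ once we know $\F_W$ is bounded on $\SC^2$ — which itself follows because $\rho$ is surjective and $\F_W$ agrees with $\rho^{-1}$ there. Since $\rho$ is unitary and $\F_W$ is its inverse, $\F_W: \SC^2 \to L^2(\reals^{2d})$ is unitary.

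For the product formula, I would argue by density: for $S, T \in \rho(L^1 \cap L^2)$, write $S = \rho(f)$, $T = \rho(g)$ with $f = \F_W S$, $g = \F_W T$. Then $ST = \rho(f)\rho(g) = \rho(f \natural g)$ by the twisted-convolution product formula, so $\F_W(ST) = \F_W(\rho(f \natural g)) = f \natural g = \F_W(S) \natural \F_W(T)$; here I need $f \natural g \in L^2$, which holds by the estimate $\|f \natural g\|_{L^2} \leq \|f\|_{L^2}\|g\|_{L^2}$ quoted in section \ref{sec:intschrodrep}, applied to $f, g \in L^2$. Finally I extend to arbitrary $S, T \in \SC^2$: both sides are continuous in $(S,T)$ — the left because $S \mapsto \F_W(ST)$ factors through $\SC^2 \times \SC^2 \to \SC^1 \to \SC^2$ is delicate, so instead I'd use that multiplication $\SC^2 \times \SC^2 \to \SC^1$ composed with... — more cleanly, $\F_W$ is unitary on $\SC^2$ hence continuous, and the bilinear map $(S,T) \mapsto \F_W(S)\natural \F_W(T)$ is continuous from $\SC^2 \times \SC^2$ to $L^2$ by the two quoted norm estimates, while $(S,T) \mapsto ST$ is continuous from $\SC^2\times \SC^2$ to $\SC^1 \hookrightarrow \SC^2$; applying $\F_W: \SC^2 \to L^2$ (bounded) gives continuity of the other side. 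Density of $\rho(L^1 \cap L^2)$ in $\SC^2$ then finishes.

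The main obstacle is the identification $\F_W \circ \rho = \mathrm{id}$, specifically the rigorous handling of $\tr(\pi(w))$ as a distribution: the trace of a unitary is not literally defined, so the formal computation "$\tr(\pi(-z)\pi(z')) = (\text{phase})\,\delta(z-z')$" must be justified by pairing against the $L^2$-function $f$ and invoking Fubini together with an honest statement of the orthogonality relations for the $\pi(z)$. An alternative that sidesteps this: verify $\F_W = \rho^{-1}$ on rank-one operators $\varphi_2 \otimes \varphi_1$ with $\varphi_i \in L^2$, where the previous lemma gives $\F_W(\varphi_2\otimes\varphi_1) = A(\varphi_2,\varphi_1)$, and one checks directly that $\rho(A(\varphi_2,\varphi_1)) = \varphi_2 \otimes \varphi_1$ using the relation $L_f = \rho(\F_\sigma f)$ together with $W(\psi,\phi) = \F_\sigma A(\psi,\phi)$ and the basic fact $L_{W(\varphi_2,\varphi_1)} = \varphi_2 \otimes \varphi_1$ from the Weyl calculus. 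Since finite-rank operators are dense in $\SC^2$, this gives $\F_W = \rho^{-1}$ on all of $\SC^2$, and the rest proceeds as above. I expect the rank-one route to be the more robust one to write down.
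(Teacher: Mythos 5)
Your recommended ``rank-one route'' is essentially the paper's proof: the paper likewise verifies $\rho\,\F_W=\mathcal{I}$ on operators $\varphi_2\otimes\varphi_1$ (via Moyal's identity, after first checking that $\F_W$ is an isometry on finite-rank operators), extends by density, deduces unitarity from that of $\rho$, and obtains $\F_W(ST)=\F_W(S)\natural\F_W(T)$ from $\rho(f)\rho(g)=\rho(f\natural g)$ together with injectivity of $\rho$ --- your density step for the product formula is harmless but unnecessary, since the twisted-convolution product formula already holds on all of $L^2(\reals^{2d})$. Your instinct to distrust the first route was correct: $\pi(w)$ is not trace class, so $\tr(\pi(-z)\pi(z'))$ is undefined, and Proposition \ref{prop:intistrace} only moves the trace past integrals of the form $\int f(z)U(z)TU(z)^*\,dz$ with $T$ trace class, which $\rho(f)$ is not; making the $\delta$-computation rigorous would require exactly the orthogonality relations (Moyal) that the rank-one argument uses directly.
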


\begin{proof}
	 By the singular value decomposition, elements of the form $S=\sum_{n=1}^N s_n \psi_n\otimes \phi_n$ are dense in $\SC^2$, where $\{\psi_n\}_{n=1}^N$ and $\{\phi_n\}_{n=1}^N$ are orthonormal sets in $\HS$, $s_n>0$ are the singular values of $S$ and $N\in \mathbb{N}$. By the preceding lemma $\F_W(S)=\sum_{n=1}^N e^{\pi i x \cdot \omega} s_n V_{\phi_n}\psi_n$, and we find that
\begin{align*}
	\|\F_W(S)\|^2_{L^2}&= \inner{\sum_{n=1}^N s_n e^{\pi i x \cdot \omega} V_{\phi_n}\psi_n}{\sum_{m=1}^N s_m e^{\pi i x \cdot \omega} V_{\phi_m}\psi_m}_{L^2} \\
	&= \sum_{m,n=1}^N s_m s_n \inner{ V_{\phi_n}\psi_n}{ V_{\phi_m}\psi_m}_{L^2} \\
	&= \sum_{m,n=1}^N s_ms_n \inner{\psi_n}{\psi_m}_{L^2}\inner{\phi_m}{\phi_n}_{L^2} \\
	&= \sum_{n=1}^N s_n^2 = \|S\|_{\SC^2}^2.
\end{align*}
Hence the Fourier-Wigner transform is an isometry on a dense subspace of $\SC^2$ into $L^2(\reals^{2d})$, and therefore extends to an isometry $\F_W:\SC^2 \to L^2(\reals^{2d})$. 
To show that the extension is the inverse of $\rho$, we consider $T=\varphi_2 \otimes \varphi_1$ for $\varphi_1,\varphi_2 \in \HS$. We have already shown that $\F_W(T)(z)=e^{\pi i x \cdot \omega}V_{\varphi_1}\varphi_2$. If we let $\psi,\phi \in \HS$, we may use the weak formulation of the vector-valued integral defining $\rho$ to calculate
	\begin{align*}
  \inner{\rho(e^{\pi i x \cdot \omega}V_{\varphi_1}\varphi_2)\psi}{\phi}&= \iint_{\reals^{2d}} V_{\varphi_1}\varphi_2 \inner{\pi(z)\psi}{\phi} \ dz \\
  &= \iint_{\reals^{2d}} V_{\varphi_1}\varphi_2 \overline{V_{\psi}\phi} \ dz \\
  &= \inner{\varphi_2}{\phi}\inner{\psi}{\varphi_1},
\end{align*}
where the last equality is Moyal's identity. The last expression clearly equals $\inner{Tf}{g}$. If we denote the identity operator on $\SC^2$ by $\mathcal{I}_{\SC^2}$, we have shown that $\rho \F_WT=\mathcal{I}_{\SC^2}T$. By linearity this equality of operators must hold on the dense subspace of $\SC^2$ spanned by such operators $T$, and therefore $\rho \F_W=\mathcal{I}_{\SC^2}$ by continuity. As $\rho$ is unitary, it has an inverse, which implies that $\F_W$ is a two-sided inverse of $\rho$.

 Since $\rho$ is the inverse of $\F_W$, $\rho\F_W(ST)=ST$. But $\rho \left(\F_W(S) \natural \F_W(T)\right)=\rho \F_W(S)\rho \F_W(T) =ST$ from section \ref{sec:intschrodrep}. The equality $\F_W(ST)=\F_W(S) \natural \F_W(T)$ now follows since $\rho$ is injective. 
\end{proof}
As a simple corollary we obtain the following known result \cite[Prop. 286]{deGosson:2011wq}.
\begin{cor}
  	Let $f \in L^1(\reals^{2d})$ be a function such that the Weyl transform $L_{f}$ is a trace class operator. The trace of $L_{f}$ is given by	
  \begin{equation*}
  \tr(L_{f})=\iint_{\reals^{2d}} f(z) \ dz.
\end{equation*}
  \end{cor}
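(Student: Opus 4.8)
The plan is to deduce this corollary directly from Proposition \ref{prop:wignerisunitary}, specifically from the fact that $\F_W$ is the two-sided inverse of the integrated Schr\"odinger representation $\rho$. The starting observation is the relationship between the Weyl calculus and the integrated Schr\"odinger representation recorded in Section \ref{sec:intschrodrep}, namely $L_f = \rho(\F_\sigma f)$ for a symbol $f$. Assuming $L_f \in \tco$, we may apply $\F_W$ to both sides and use $\F_W \rho = \mathcal{I}_{\SC^2}$ (valid at least on $\SC^2$, which we need to check contains the relevant objects), obtaining $\F_W(L_f) = \F_\sigma f$. Evaluating the Fourier-Wigner transform at $z = 0$ gives $\F_W(L_f)(0) = e^{0}\tr(\pi(0) L_f) = \tr(L_f)$, since $\pi(0)$ is the identity operator. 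On the other hand, $\F_\sigma f(0) = \iint_{\reals^{2d}} f(z')\, e^{-2\pi i \sigma(0,z')}\, dz' = \iint_{\reals^{2d}} f(z')\, dz'$ because $\sigma(0, z') = 0$. Comparing the two evaluations yields the claimed formula.

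First I would make precise the hypothesis: we assume $f \in L^1(\reals^{2d})$ and that $L_f$ is trace class. Then $\F_\sigma f \in C_0(\reals^{2d})$ by the Riemann–Lebesgue lemma, and the twisted Weyl symbol of $L_f$ is $\F_\sigma f$, so $L_f = \rho(\F_\sigma f)$. Next I would note that $\rho$ maps $L^2(\reals^{2d})$ unitarily onto $\SC^2$, and $\F_W = \rho^{-1}$ on $\SC^2$; since $L_f$ is trace class, it is in particular Hilbert–Schmidt, so $L_f \in \SC^2$ and $\F_\sigma f = \F_W(L_f) \in L^2(\reals^{2d})$. Thus we are entitled to write $\F_W(L_f) = \F_\sigma f$ as an identity in $L^2(\reals^{2d})$.

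The one genuine subtlety — and the step I expect to be the main obstacle — is the pointwise evaluation at $z = 0$. The identity $\F_W(L_f) = \F_\sigma f$ holds a priori only as an equality of $L^2$-functions, i.e.\ almost everywhere, so evaluating at the single point $z=0$ requires knowing that both sides are continuous. For the right side this is the Riemann–Lebesgue lemma: $f \in L^1$ implies $\F_\sigma f \in C_0(\reals^{2d})$. For the left side one uses that $S \mapsto \tr(\pi(-z)S)$ is continuous in $z$ for fixed trace class $S$, which follows from the weak*-continuity of $z \mapsto \pi(-z)$ against trace class operators (a standard fact, and essentially Proposition \ref{prop:continuity}(3) applied to the bounded operators $\pi(-z)$), together with continuity of the phase $e^{-\pi i x\cdot\omega}$; hence $\F_W(L_f) \in C_0(\reals^{2d})$ as well. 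Once both representatives are known to be continuous, the almost-everywhere equality upgrades to equality everywhere, and in particular at $z=0$, completing the argument. I would present this as a short paragraph, since all the hard analytic work has already been done in Proposition \ref{prop:wignerisunitary} and the preliminaries.

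\begin{proof}
Since $f\in L^1(\reals^{2d})$, the Riemann–Lebesgue lemma gives $\F_\sigma f\in C_0(\reals^{2d})$. By the relationship between the Weyl calculus and the integrated Schr\"odinger representation recorded in section \ref{sec:intschrodrep}, the twisted Weyl symbol of $L_f$ is $\F_\sigma f$, that is, $L_f=\rho(\F_\sigma f)$. By assumption $L_f\in\tco$, hence $L_f\in\SC^2$, so $\F_\sigma f=\F_W(L_f)\in L^2(\reals^{2d})$ by proposition \ref{prop:wignerisunitary}, and applying the two-sided inverse $\F_W$ of $\rho$ yields
\begin{equation*}
  \F_W(L_f)=\F_\sigma f
\end{equation*}
as an identity in $L^2(\reals^{2d})$.

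We now evaluate both sides at $z=0$. The right-hand side is continuous, and $\F_\sigma f(0)=\iint_{\reals^{2d}} f(z')\,e^{-2\pi i\sigma(0,z')}\,dz'=\iint_{\reals^{2d}} f(z')\,dz'$ since $\sigma(0,z')=0$. For the left-hand side, $z\mapsto \tr(\pi(-z)L_f)$ is continuous by the weak*-continuity of $z\mapsto\pi(-z)$ against trace class operators, and multiplication by the continuous phase $e^{-\pi i x\cdot\omega}$ preserves continuity, so $\F_W(L_f)\in C_0(\reals^{2d})$. Since both representatives are continuous, the almost-everywhere equality above holds everywhere, and in particular at $z=0$, where $\pi(0)$ is the identity operator:
\begin{equation*}
  \tr(L_f)=e^{0}\tr(\pi(0)L_f)=\F_W(L_f)(0)=\F_\sigma f(0)=\iint_{\reals^{2d}} f(z)\,dz.\qedhere
\end{equation*}
\end{proof}
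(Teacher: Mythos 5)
Your proof is correct and follows essentially the same route as the paper: identify $\F_W(L_f)$ with the twisted Weyl symbol $\F_\sigma f$ via proposition \ref{prop:wignerisunitary} and evaluate at $z=0$. The only difference is that you carefully justify the pointwise evaluation by upgrading the a.e.\ identity to everywhere via continuity of both sides (which could also be cited directly from the Riemann--Lebesgue lemma for $\F_W$ of trace class operators), a point the paper's proof passes over silently.
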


\begin{proof}
	On the one hand, the previous proposition shows that the $\F_W(L_f)$ is the twisted Weyl symbol of $L_f$, which we know is $\F_{\sigma}f$ from section \ref{sec:intschrodrep}. On the other hand, $\F_W(L_f)(z)=e^{-\pi i x \cdot \omega}\tr(\pi(-z)L_f)$ from the definition of the Fourier Wigner transform. Therefore $\F_{\sigma}f(z)=e^{-\pi i x \cdot \omega}\tr(\pi(-z)L_f)$, and evaluating this at $z=0$ gives the desired equality.
\end{proof}

One can also extend the Fourier-Wigner transform to $S\in \bo$; $\F_W(S)$ will then be a tempered distribution. In fact, we will define $\F_W(S)$ as an element of $M^{\infty}(\reals^{2d})$, which is a subset of the tempered distributions.

\begin{prop}
	Let $A\in \bo$. Then we may define $\F_W(A)\in M^{\infty}(\reals^{2d})$ with $\|\F_W(A)\|_{M^{\infty}}\leq C \|A\|_{B(L^2)}$ for some constant $C$. 
\end{prop}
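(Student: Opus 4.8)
The plan is to extend $\F_W$ from $\SC^2$ to $\bo$ by duality, using the pairing between $\SC^1$ and $\bo$ together with the characterization of $M^\infty(\reals^{2d})$ as the dual of $M^1(\reals^{2d})$. The starting observation is that for a trace class operator $S$ we have $\F_W(S)(z) = e^{-\pi i x\cdot\omega}\tr(\pi(-z)S)$, and since $\pi(-z)$ is unitary, $|\F_W(S)(z)| \leq \|S\|_{\SC^1}$ for all $z$; so on $\SC^1$ the transform already lands in $L^\infty$, but this bound alone is not enough to land in $M^\infty$. The key is to pair against test functions: for $g \in M^1(\reals^{2d})$, the operator $A_g$ (integral operator / Weyl quantization with symbol in $M^1$) lies in $\mathcal{M} \subset \SC^1$ by Theorem \ref{thm:integraloperators}, with $\|A_g\|_{\SC^1} \leq K\|g\|_{M^1}$. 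This gives us a supply of trace class operators parametrized by $M^1$ functions, which is exactly what is needed to define $\F_W(A)$ for $A \in \bo$ by testing.

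Concretely, first I would verify an adjoint-type identity on nice operators: for $S, T \in \SC^2$ (or $S \in \SC^1$, $T \in \SC^\infty$ suitably restricted) one has a formula of the shape $\inner{\F_W S}{\F_W T}_{L^2} = \inner{S}{T}_{\SC^2}$ coming from Proposition \ref{prop:wignerisunitary} (unitarity of $\F_W$), or more usefully, using $\rho = \F_W^{-1}$, an identity $\tr(A\, \rho(g)^*)$ against the twisted symbol. The cleanest route: since $\rho:L^2(\reals^{2d})\to \SC^2$ is unitary with inverse $\F_W$, for $S\in\SC^2$ and $g\in L^2(\reals^{2d})$ we have $\inner{\F_W S}{g}_{L^2} = \inner{S}{\rho(g)}_{\SC^2} = \tr(S\,\rho(g)^*)$. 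Now for $A\in\bo$ and $g\in M^1(\reals^{2d})$, the right-hand side $\tr(A\,\rho(g)^*)$ still makes sense because $\rho(g)^*$ is trace class: indeed $\rho$ maps $M^1$ into $\mathcal{M}\subset\SC^1$ (this is the content of $\mathcal{M}$ being the operators with twisted Weyl symbol in $M^1$, cited after the Notation), so $|\tr(A\rho(g)^*)| \leq \|A\|_{B(L^2)}\|\rho(g)\|_{\SC^1} \leq K\|A\|_{B(L^2)}\|g\|_{M^1}$ by Proposition \ref{prop:traceprops}\ref{prop:traceprops:abstrace} and Theorem \ref{thm:integraloperators}. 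Therefore $g \mapsto \tr(A\,\rho(g)^*)$ (or its complex conjugate, to respect the antilinearity convention) is a bounded conjugate-linear functional on $M^1(\reals^{2d})$ with norm at most $C\|A\|_{B(L^2)}$, hence defines an element $\F_W(A) \in (M^1(\reals^{2d}))^* = M^\infty(\reals^{2d})$ with $\|\F_W(A)\|_{M^\infty} \leq C\|A\|_{B(L^2)}$. Finally one checks this definition is consistent with the $\SC^2$ (and $\SC^1$) definition on the overlap, which follows from the identity above since $M^1$ is dense in $L^2$.

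The main obstacle I expect is bookkeeping around three conventions simultaneously: the antilinearity of the duality bracket (so the functional must be conjugate-linear in $g$, or one pre-conjugates), the precise identification $(M^1)^* = M^\infty$ with a compatible bracket, and making sure the formula $\inner{\F_W S}{g} = \tr(S\rho(g)^*)$ is stated with the phase factors in the definition of $\F_W$ and $\rho$ correctly matched. None of these is deep, but getting a clean, self-consistent statement requires care. A secondary point worth a sentence is density/consistency: one should note that since $M^1(\reals^{2d})$ is dense in $L^2(\reals^{2d})$ and $\rho$ restricted to $M^1$ has dense range in $\SC^1\cap\SC^2$ in the appropriate sense, the extended $\F_W$ genuinely agrees with the earlier ones, so the notation is unambiguous.
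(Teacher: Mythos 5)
Your proposal is correct and follows essentially the same route as the paper: define $\F_W(A)$ as the functional $g\mapsto \inner{A}{\rho(g)}=\tr(A\rho(g)^*)$ on $M^1(\reals^{2d})$, use Theorem \ref{thm:integraloperators} (together with the $M^1$-equivalence of kernel, Weyl symbol and twisted Weyl symbol) to get $\|\rho(g)\|_{\tco}\leq C\|g\|_{M^1}$, and conclude via $(M^1)^*=M^\infty$ and the trace inequality. The consistency check with the $\SC^1$/$\SC^2$ definitions that you flag is also exactly the remark the paper makes.
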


\begin{proof}
	$M^{\infty}(\reals^{2d})$ is the dual space of $M^1(\reals^{2d})$ \cite{Grochenig:2001}. We therefore define $\F_W(A)$ by requiring that $ \inner{F_W(A)}{g}=\inner{A}{\rho(g)}$ for any $g\in M^1(\reals^{2d})$, where the bracket denotes duality and is antilinear in the second argument. It is a simple exercise to check that this relation holds for $A\in \tco$. Note that we need that $\rho(g)\in \tco$ for this to make sense, and this holds by theorem \ref{thm:integraloperators}.\\
	We now check that this defines $\F_W(A)$ as a bounded functional on $M^1(\reals^{2d})$ and obtain the norm estimate. One can show that $\|\rho(g)\|_{\tco}\leq C \|g\|_{M^1}$ for some constant $C$: If we let $k_g$ be the kernel of $\rho(g)$ as an integral operator, theorem \ref{thm:integraloperators} says that there is a constant $K$ with $\|\rho(g)\|_{\tco}\leq K\|k_g\|_{M^1}$.  Furthermore, $k$ and the Weyl symbol $\sigma$ of $\rho(g)$ are related by operations under which $M^1(\reals^{2d})$ is invariant \cite{he03-1}, and similarly $\sigma$ and $g$ are related by the symplectic Fourier transform, under which $M^1(\reals^{2d})$ is also invariant \cite{felu06}. In conclusion, there is some constant $C$ with $\|\rho(g)\|_{\tco}\leq C \|g\|_{M^1}$. We now find that
	\begin{align*}
  |\inner{F_W(A)}{g}|&=|\inner{A}{\rho(g)}| \\
  &\leq \|A\|_{B(L^2)}\|\rho(g)\|_{\tco} \\
  &\leq C \|A\|_{B(L^2)}\|g\|_{M^1}.
\end{align*}

\end{proof}

The Fourier-Wigner transform shares several properties with the Fourier transform of functions. The next proposition, due to Werner \cite{Werner:1984}, provides an example of this.

\begin{prop}
	\label{prop:convolutionandft}
	Let $f \in L^1(\reals^{2d})$ and $S,T \in \tco$. 
	\begin{enumerate}
		\item $\F_{\sigma}(S \ast T)=\F_W(S)\F_W(T)$.
		\item $\F_W(f \ast S)=\F_{\sigma}(f)\F_W(S)$.
	\end{enumerate}	
\end{prop}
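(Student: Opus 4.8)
The plan is to verify both identities by testing them against suitable dual elements, reducing everything to the behaviour of $\F_W$ on rank-one operators (where it becomes the cross-ambiguity function) together with the known intertwining $\rho = \F_W^{-1}$ from Proposition~\ref{prop:wignerisunitary} and the module identities from Lemma~\ref{lem:tidbits}. The key point is that each of the two claims equates a function defined either by a trace formula or by a vector-valued integral, so a clean strategy is to pair both sides with test functions and use the trace-commutation and Fubini results (Propositions~\ref{prop:intistrace} and \ref{prop:traceprops}) already established.

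For part (2), $\F_W(f \ast S) = \F_{\sigma}(f)\,\F_W(S)$, I would compute $\F_W(f\ast S)(z) = e^{-\pi i x\cdot\omega}\tr(\pi(-z)(f\ast S))$ directly. Using Proposition~\ref{prop:intistrace} to move the trace inside the defining integral $f\ast S = \iint f(y)\,\alpha_y(S)\,dy$, this becomes $e^{-\pi i x\cdot\omega}\iint f(y)\,\tr(\pi(-z)\pi(y)S\pi(y)^*)\,dy$. Now I would use the cocycle relation $\pi(y)^*\pi(-z)\pi(y) = e^{-2\pi i \sigma(z,y)}\pi(-z)$ (a consequence of the stated $\pi(z)\pi(z')=e^{-2\pi i\omega'\cdot x}\pi(z+z')$, equivalently part~\eqref{lem:propsofweyl:sympform} of Lemma~\ref{lem:propsofweyl} applied to $\alpha_y\pi(-z)$) together with cyclicity of the trace to pull out the scalar $e^{-2\pi i\sigma(z,y)}$, leaving $e^{-\pi i x\cdot\omega}\tr(\pi(-z)S)\cdot\iint f(y)e^{-2\pi i\sigma(z,y)}\,dy = \F_W(S)(z)\,\F_{\sigma}f(z)$. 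This is essentially a direct computation once the phase bookkeeping is set up, and it is the easier of the two parts.

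For part (1), $\F_{\sigma}(S\ast T) = \F_W(S)\F_W(T)$, the cleanest route is to reduce to part (2) together with the associativity and the identity $\rho=\F_W^{-1}$. One approach: recall $S\ast T$ is a function in $L^1(\reals^{2d})$ by Lemma~\ref{lem:werner}, and $\rho(S\ast T) = ?$ — actually I would instead exploit that both sides lie in $C_0$ (by the Riemann–Lebesgue behaviour / Proposition~\ref{prop:compactcontinuous}) and compute $\F_{\sigma}(S\ast T)(z)$ from the definition $S\ast T(w) = \tr(S\alpha_w\check T)$ after applying $\F_{\sigma}$. Writing out $\iint \tr(S\pi(w)P T P\pi(w)^*)e^{-2\pi i\sigma(z,w)}\,dw$ and inserting the singular value decompositions of $S$ and $T$ as in the proof of Lemma~\ref{lem:werner}, the integrand becomes a sum of terms $s_m t_n V_{\xi_n}\psi_m(w)\overline{V_{\eta_n}\phi_m}(w)$ (up to the parity substitution $\check T$), and $\F_{\sigma}$ of such a product of STFTs is a product of ambiguity-type functions by a standard STFT identity; matching this against $\F_W(S)(z)\F_W(T)(z) = A(\psi\text{-factors})\cdot\overline{A(\dots)}$ term by term gives the result. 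Alternatively, and perhaps more elegantly, one can use part (2) plus the fact that for $R\in\tco$ one has $R\ast T = (\F_W^{-1}\F_W R)\ast T$ and bootstrap from the rank-one case where the claim is exactly a known STFT/ambiguity-function identity (Fourier transform of a product of two ambiguity functions), then extend by bilinearity, $L^1$-continuity and density.

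The main obstacle I anticipate is the phase/parity bookkeeping in part (1): the operator $\check T = PTP$ introduces a sign flip $z\mapsto -z$ and the symplectic Fourier transform, the factors $e^{-\pi i x\cdot\omega}$ in the definitions of $\F_W$, and the cocycle phases all have to be tracked so that the stray exponentials cancel exactly and no spurious factor of $e^{\pm\pi i x\cdot\omega}$ survives. To keep this manageable I would either do the rank-one case once by hand (where it is transparent via Moyal and the elementary identity $\F_{\sigma}A(\psi,\phi)=W(\psi,\phi)$) and then invoke density, bilinearity, and the norm estimates of Proposition~\ref{prop:convschatten} and Lemma~\ref{lem:werner} to pass to general trace-class $S,T$, or route part (1) through part (2) by convolving with an approximate identity; both reduce the nontrivial content to a single clean computation.
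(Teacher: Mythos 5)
Your part (2) is essentially the paper's proof verbatim: move the trace inside the vector-valued integral via Proposition \ref{prop:intistrace}, use the covariance relation of Lemma \ref{lem:propsofweyl}\eqref{lem:propsofweyl:sympform} and cyclicity to extract the scalar $e^{-2\pi i\sigma(z,z')}$, and recognize the remaining integral as $\F_\sigma f(z)$. For part (1), however, you take a genuinely different route. The paper's trick is to observe that the kernel $e^{-2\pi i\sigma(z,z')}$ of the symplectic Fourier transform can itself be absorbed into a conjugation, $e^{-2\pi i\sigma(z,z')}\pi(z')=\alpha_{-z}\pi(z')$, so that $\F_\sigma(S\ast T)(z)$ becomes literally $\iint \tr\bigl[(S\pi(-z))\,\alpha_{z'}(\pi(-z)^*\check{T})\bigr]\,dz'$, to which the generalized Moyal identity of Lemma \ref{lem:werner} applies directly with $S\pi(-z)$ and $\pi(-z)^*\check{T}$ in place of $S$ and $T$; this yields $\tr(S\pi(-z))\tr(\pi(-z)^*\check{T})$ for arbitrary trace-class $S,T$ in one stroke, with only the phase and parity bookkeeping left. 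Your route instead reduces to rank-one operators via the singular value decomposition, proves the identity there as a statement about $\F_\sigma$ of a product of two STFTs being a product of ambiguity functions, and extends by bilinearity, the continuity estimates of Lemma \ref{lem:werner} and Proposition \ref{prop:hausdorffyoung} (for $p=1$), and density. This works — the rank-one identity you need, essentially $\F_\sigma\bigl(V_{\check{\xi}}\psi\cdot\overline{V_{\check{\eta}}\phi}\bigr)=A(\psi,\phi)A(\eta,\xi)$, does follow from Moyal's identity by the same absorption of the exponential into a time-frequency shift — but it is an extra lemma you must actually prove, whereas the paper's argument needs nothing beyond Lemma \ref{lem:werner}. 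What your approach buys is transparency about how the result generalizes classical ambiguity-function identities; what the paper's buys is brevity and no density-passage. One small caution: do not lean on the Riemann--Lebesgue lemma for $\F_W$ here, since in this paper that lemma is \emph{deduced from} Proposition \ref{prop:convolutionandft}; the integrability of $S\ast T$ that you actually need is already supplied by Lemma \ref{lem:werner}, and the classical Riemann--Lebesgue lemma for $\F_\sigma$ on $L^1(\reals^{2d})$ is of course independent.
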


\begin{proof}
	\begin{enumerate}
		\item By definition, $\F_{\sigma}(S \ast T)(z)=\iint_{\reals^{2d}} \tr \left[S\pi(z')\check{T}\pi(z')^*\right] e^{-2 \pi i\sigma(z,z')} \ dz'$. Using part \eqref{lem:propsofweyl:sympform} of lemma \ref{lem:propsofweyl} to get that $e^{-2 \pi i\sigma(z,z')}\pi(z')=\alpha_{-z}\pi(z')$, the integrand may be written in a way that will allow us to use lemma \ref{lem:werner}:
		\begin{align*}
		\tr \left[S\pi(z')\check{T}\pi(z)^*\right] e^{-2 \pi i\sigma(z,z')} &=  \tr \left[Se^{-2 \pi i\sigma(z,z')}\pi(z')\check{T}\pi(z')^*\right] \\
		&= \tr \left[S\pi(-z)\pi(z')\pi(-z)^*\check{T}\pi(z')^*\right]. 
		\end{align*}
		Lemma \ref{lem:werner} then gives that 
		\begin{align*}
		\F_{\sigma}(S \ast T)(z)&=\iint_{\reals^{2d}} \tr \left[S\pi(-z)\alpha_{z'}(\pi(-z)^*\check{T})\right]  \ dz' \\
		&= \tr(S\pi(-z))\tr(\pi(-z)^*\check{T}) \\
		&= \tr(S\pi(-z))\tr(e^{-2 \pi i x \cdot \omega} \pi(z)\check{T}) \\
		&= \tr(e^{- \pi i x \cdot \omega}S\pi(-z))\tr(e^{- \pi i x \cdot \omega} \pi(-z)T) \\
		&= \F_W(S)(z)\F_W(T)(z),
		\end{align*}
		where we have used that $\tr(\pi(z)\check{T})=\tr(\pi(z)PTP)=\tr(P\pi(z)PT)=\tr(\pi(-z)T)$ from part \eqref{lem:propsofweyl:parity} of lemma \ref{lem:propsofweyl}.
		
		\item By proposition \ref{prop:intistrace} we may take the trace inside the integral: 
		\begin{align*}
		\F_W(f \ast S)(z)&= e^{-\pi i x \cdot \omega}\tr \left( \pi(-z)\iint_{\reals^{2d}} f(z')\pi(z')S\pi(z')^* \ dz' \right) \\
		&= e^{-\pi i x \cdot \omega} \iint_{\reals^{2d}} f(z')\tr \left[\pi(-z)\pi(z')S\pi(z')^* \right] \ dz'.
		\end{align*}
		
		A simple manipulation of the integrand using part \eqref{lem:propsofweyl:sympform} of lemma \ref{lem:propsofweyl} yields that $\tr \left[\pi(-z)\pi(z')S\pi(z')^* \right]=e^{-2 \pi i\sigma(z,z')}\tr(\pi(-z)S)$. Inserting this expression into our calculation concludes the proof, since
		\begin{align*}
		\F_W(f \ast S)(z)&=e^{-\pi i x \cdot \omega} \iint_{\reals^{2d}} f(z')e^{2 \pi i\sigma(z,z')}\tr(\pi(-z)S) \ dz' \\
		&= e^{-\pi i x \cdot \omega} \tr(\pi(-z)S) \iint_{\reals^{2d}} f(z')e^{-2 \pi i\sigma(z,z')} \ dz' \\
		&= \F_{\sigma}(f)\F_W(S). \qedhere
		\end{align*} 
	\end{enumerate}
\end{proof}

The previous result is not merely aesthetically pleasing, but will be crucial in several proofs in the rest of this text. This is illustrated by the next two results. The first result gives the Weyl symbol of localization operators, and the second is a generalization of the Riemann-Lebesgue lemma, due to Werner \cite{Werner:1984}.

\begin{cor}
	Let $f \in L^1(\reals^{2d})$ and $S \in \tco$. The twisted Weyl symbol of  $f\ast S$ is the function $\F_{\sigma}(f) \F_W(S)$.
In particular, if $\varphi_1,\varphi_2 \in \HS$, then the twisted Weyl symbol of the localization operator $\mathcal{A}^{\varphi_1,\varphi_2}_f$ is the function $\F_{\sigma}(f)\cdot A(\varphi_2,\varphi_1)$.
\end{cor}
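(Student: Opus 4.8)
The plan is to translate the statement into the language of the Fourier--Wigner transform and the integrated Schr\"odinger representation, where it is essentially a restatement of results already established. Recall from Section~\ref{sec:intschrodrep} that the twisted Weyl symbol of an operator $A$ is, by definition, the function $g$ with $A=\rho(g)$, and that by Proposition~\ref{prop:wignerisunitary} we have $\rho=\F_W^{-1}$, so the twisted Weyl symbol of a trace class operator $A$ is precisely $\F_W(A)$. Thus to identify the twisted Weyl symbol of $f\ast S$ it suffices to compute $\F_W(f\ast S)$.

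First I would invoke part (2) of Proposition~\ref{prop:convolutionandft}, which gives $\F_W(f\ast S)=\F_{\sigma}(f)\F_W(S)$ directly. This already proves the first assertion: the twisted Weyl symbol of $f\ast S$ is $\F_{\sigma}(f)\F_W(S)$. One small point to address is that $f\ast S\in\tco$ by the remark following Proposition~\ref{prop:convschatten} (or by Proposition~\ref{prop:intistrace}), so $\F_W(f\ast S)$ is genuinely an $L^2$ function and the identification of the twisted Weyl symbol via $\rho^{-1}=\F_W$ is legitimate.

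For the second assertion I would specialize to $S=\varphi_2\otimes\varphi_1$. By Theorem~\ref{thm:realtionlocberconv} we have $\mathcal{A}_f^{\varphi_1,\varphi_2}=f\ast(\varphi_2\otimes\varphi_1)$, and by the lemma computing the Fourier--Wigner transform of a rank-one operator, $\F_W(\varphi_2\otimes\varphi_1)=A(\varphi_2,\varphi_1)$, the cross-ambiguity function. Substituting into the formula from the first part yields that the twisted Weyl symbol of $\mathcal{A}_f^{\varphi_1,\varphi_2}$ is $\F_{\sigma}(f)\cdot A(\varphi_2,\varphi_1)$, as claimed.

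There is essentially no obstacle here: the corollary is a direct combination of Proposition~\ref{prop:convolutionandft}(2), Proposition~\ref{prop:wignerisunitary}, Theorem~\ref{thm:realtionlocberconv}, and the rank-one Fourier--Wigner lemma. The only mild subtlety worth a sentence is making explicit the identification ``twisted Weyl symbol of $A$'' $=\F_W(A)$ for $A\in\tco$, which follows because $\rho$ and $\F_W$ are mutually inverse unitaries between $L^2(\reals^{2d})$ and $\SC^2$, and $\tco\subset\SC^2$.
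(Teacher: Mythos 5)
Your proof is correct and follows essentially the same route as the paper: identify the twisted Weyl symbol via $\rho^{-1}=\F_W$ (Proposition \ref{prop:wignerisunitary}), apply Proposition \ref{prop:convolutionandft}(2), and then specialize to $S=\varphi_2\otimes\varphi_1$ using Theorem \ref{thm:realtionlocberconv} and the rank-one Fourier--Wigner computation. The extra sentence confirming $f\ast S\in\tco$ is a reasonable touch, though not needed beyond what the paper already records.
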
  
 
\begin{proof}
 	We know from proposition \ref{prop:wignerisunitary} that $\F_W$ is the inverse operator to the integrated Schr{\"o}dinger representation, and thus returns the twisted Weyl symbol of an operator. From proposition \ref{prop:convolutionandft} we find that $\F_W(f\ast S)=\F_{\sigma}(f)\F_W(S)$.
 	\end{proof} 
\begin{prop}[Riemann-Lebesgue lemma]

	If $S\in \tco$, the Fourier-Wigner transform $\F_W(S)$ is continuous and vanishes at infinity, i.e. $\lim\limits_{|z|\to\infty} |\F_W(z)|=0$.
\end{prop}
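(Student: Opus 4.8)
The plan is to reduce the statement to a density argument combined with the continuity properties already established. First I would observe that for a rank-one operator $S=\varphi_2\otimes\varphi_1$ with $\varphi_1,\varphi_2\in\HS$, the preceding lemma identifies $\F_W(\varphi_2\otimes\varphi_1)$ with the cross-ambiguity function $A(\varphi_2,\varphi_1)$, which equals $e^{\pi i x\cdot\omega}V_{\varphi_1}\varphi_2(z)$. Since $V_{\varphi_1}\varphi_2$ is the short-time Fourier transform of an $L^2$ function with an $L^2$ window, it is continuous and vanishes at infinity by the standard properties of the STFT (this is essentially the classical Riemann–Lebesgue lemma for the STFT, following from density of Schwartz functions and the norm estimate $\|V_{\varphi_1}\varphi_2\|_\infty\le\|\varphi_1\|_{L^2}\|\varphi_2\|_{L^2}$). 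Multiplying by the unimodular factor $e^{\pi i x\cdot\omega}$ preserves both continuity and decay, so $\F_W(S)\in C_0(\reals^{2d})$ for every rank-one $S$. By linearity the same holds for every finite-rank operator.

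Next I would pass from finite-rank operators to general trace class operators by approximation. Given $S\in\tco$, use the singular value decomposition to write $S=\sum_{n}s_n\psi_n\otimes\phi_n$ and let $S_N=\sum_{n\le N}s_n\psi_n\otimes\phi_n$ be the truncations; then $\|S-S_N\|_{\tco}\to 0$. Now I would invoke a uniform bound: from the definition $\F_W S(z)=e^{-\pi i x\cdot\omega}\tr(\pi(-z)S)$ together with part \ref{prop:traceprops:abstrace} of Proposition \ref{prop:traceprops}, we get $\|\F_W S\|_{L^\infty}\le\|\pi(-z)\|_{B(L^2)}\|S\|_{\SC^1}=\|S\|_{\tco}$ since $\pi(-z)$ is unitary. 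Applying this to $S-S_N$ shows $\F_W S_N\to\F_W S$ uniformly on $\reals^{2d}$. Since each $\F_W S_N$ lies in $C_0(\reals^{2d})$ and $C_0(\reals^{2d})$ is closed under uniform limits, we conclude $\F_W S\in C_0(\reals^{2d})$, which is exactly the assertion.

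An even cleaner route, which I would mention as an alternative, is to deduce the result directly from Proposition \ref{prop:compactcontinuous}: writing $S=S\ast\delta$ is not available, but one can instead use that $\F_W(S)=\F_{\sigma}(S\ast T_0)$-type identities together with the correspondence between compact operators and $C_0$ functions; however the approximation argument above is self-contained and shorter, so that is the one I would write out. I do not anticipate a genuine obstacle here — the only point requiring a little care is the Riemann–Lebesgue property of the STFT $V_{\varphi_1}\varphi_2$ for merely $L^2$ windows and vectors, and that itself follows from density of $\mathcal{S}(\reals^d)$ in $\HS$ (for Schwartz inputs $V_{\varphi_1}\varphi_2$ is Schwartz, hence in $C_0$) plus the uniform norm estimate from Moyal/Cauchy–Schwarz, so the whole proof is a two-layer density argument with uniform bounds supplying the limits.

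\begin{proof}
For a rank-one operator $S=\varphi_2\otimes\varphi_1$ with $\varphi_1,\varphi_2\in\HS$, the preceding lemma gives $\F_W(S)=A(\varphi_2,\varphi_1)=e^{\pi i x\cdot\omega}V_{\varphi_1}\varphi_2$. If $\varphi_1,\varphi_2\in\mathcal{S}(\reals^d)$ then $V_{\varphi_1}\varphi_2\in\mathcal{S}(\reals^{2d})\subset C_0(\reals^{2d})$, and by Moyal's identity and the Cauchy--Schwarz inequality $\|V_{\varphi_1}\varphi_2\|_{L^\infty}\leq\|\varphi_1\|_{L^2}\|\varphi_2\|_{L^2}$. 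Since $\mathcal{S}(\reals^d)$ is dense in $\HS$, approximating $\varphi_1,\varphi_2$ in $L^2$-norm and using this bound shows $V_{\varphi_1}\varphi_2\in C_0(\reals^{2d})$ for all $\varphi_1,\varphi_2\in\HS$; multiplying by the unimodular factor $e^{\pi i x\cdot\omega}$ preserves this, so $\F_W(S)\in C_0(\reals^{2d})$ for every rank-one $S$, hence for every finite-rank $S$ by linearity.

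Now let $S\in\tco$ be arbitrary, with singular value decomposition $S=\sum_{n\in\mathbb{N}}s_n\psi_n\otimes\phi_n$, and set $S_N=\sum_{n\leq N}s_n\psi_n\otimes\phi_n$, so that $\|S-S_N\|_{\tco}\to 0$ as $N\to\infty$. From the definition $\F_W S(z)=e^{-\pi i x\cdot\omega}\tr(\pi(-z)S)$, part \ref{prop:traceprops:abstrace} of Proposition \ref{prop:traceprops}, and the fact that $\pi(-z)$ is unitary, we obtain for every $z\in\reals^{2d}$
\begin{equation*}
|\F_W S(z)-\F_W S_N(z)|=|\tr(\pi(-z)(S-S_N))|\leq\|S-S_N\|_{\SC^1}.
\end{equation*}
Thus $\F_W S_N\to\F_W S$ uniformly on $\reals^{2d}$. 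Each $\F_W S_N$ belongs to $C_0(\reals^{2d})$, which is closed under uniform convergence, so $\F_W S\in C_0(\reals^{2d})$; that is, $\F_W S$ is continuous and $\lim_{|z|\to\infty}|\F_W S(z)|=0$.
\end{proof}
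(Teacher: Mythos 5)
Your proof is correct, but it takes a genuinely different route from the paper's. You reduce everything to the rank-one case, where $\F_W(\varphi_2\otimes\varphi_1)=A(\varphi_2,\varphi_1)=e^{\pi i x\cdot\omega}V_{\varphi_1}\varphi_2$ and the claim becomes the Riemann--Lebesgue property of the STFT (established by the standard dense-subspace-plus-uniform-bound argument, using that $V_{\varphi_1}\varphi_2$ is Schwartz for Schwartz inputs and the Cauchy--Schwarz bound $\|V_{\varphi_1}\varphi_2\|_{L^\infty}\leq\|\varphi_1\|_{L^2}\|\varphi_2\|_{L^2}$); you then pass to general trace class operators via singular value truncations and the estimate $\|\F_W(S)\|_{L^\infty}\leq\|S\|_{\tco}$, since $C_0(\reals^{2d})$ is closed under uniform limits. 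The paper instead proves decay at infinity by the convolution identity $\F_{\sigma}(S\ast S)=(\F_W S)^2$: since $S\ast S\in L^1(\reals^{2d})$ by Lemma \ref{lem:werner}, the classical Riemann--Lebesgue lemma applies to the left-hand side, and continuity is handled separately by a direct dominated-convergence argument on the trace using the eigenbasis of $|S|$ and the bound $|\inner{\pi(z_n)S\psi_m}{\psi_m}|\leq s_m$. Your approach is more elementary and handles continuity and decay in a single stroke, mirroring the usual proof of the Riemann--Lebesgue lemma for functions; its only external input is the (standard but not stated in the paper) fact that the STFT of Schwartz functions lies in $C_0(\reals^{2d})$. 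The paper's decay argument is shorter given the convolution machinery already in place and illustrates the use of Proposition \ref{prop:convolutionandft}, which is part of the expository point of that section. There is no circularity in your argument: the $L^\infty$ bound you invoke follows directly from part \eqref{prop:traceprops:abstrace} of Proposition \ref{prop:traceprops} and does not depend on the statement being proved.
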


\begin{proof}
	\emph{Vanishes at infinity:} $\F_{W}(S\ast S)=\F_W(S)^2$ from proposition \ref{prop:convolutionandft}. By the Riemann-Lebesgue lemma for functions, the left side vanishes at infinity, which clearly implies that $\F_W(S)$ vanishes at infinity.
	
	\emph{Continuity:} Assume that $z_n$ is a sequence converging to some $z$ in $\reals^{2d}$. We need to show that $\F_W(S)(z_n)\to \F_W(S)(z)$. Let $\{\psi_m\}_{m\in \mathbb{N}}$  be an orthonormal basis for $L^2(\reals^d)$ consisting of eigenvectors of $|S|$. By the definition of the trace
	\begin{align*}
	\lim\limits_{n\to \infty} \F_W(S)(z_n)&=\lim\limits_{n\to \infty} \sum_{m\in \mathbb{N}} \inner{e^{-\pi i x_n \cdot \omega_n}\pi(z_n)S\psi_m}{\psi_m} \\
	&= \sum_{m\in \mathbb{N}} \inner{e^{-\pi i x \cdot \omega}\pi(z)S\psi_m}{\psi_m}=\F_W(S)(z),
	\end{align*}
	where we have assumed that the limit can be taken inside the sum for now, and used the strong continuity of $z \mapsto e^{-\pi i x \cdot \omega}\pi(z)$.  By the dominated convergence theorem, we can take the limit inside the sum if $|\inner{\pi(z_n)S\psi_m}{\psi_m}|\leq a_m$ for any $n\in \mathbb{N}$, where $\{a_m\}_{m\in \mathbb{N}}$ is some sequence in $\ell^1$. If $S=U|S|$ is the polar decomposition of $S$, then
     \begin{align*}
  |\inner{\pi(z_n)S\psi_m}{\psi_m}|&= |\inner{\pi(z_n)U|S|\psi_m}{\psi_m}| \\
  &\leq \|\pi(z_n)U|S|\psi_m\|_{L^2} \\
  & \leq \||S|\psi_m\|_{L^2}=s_m,
\end{align*}
where $s_m$ is the $m$'th singular value of $S$. The sequence $\{s_m\}_{m\in \mathbb{N}}$ is summable since $S$ is trace class, so picking $a_m=s_m$ completes the proof.
\end{proof}
 	
 	We end this section by considering a Hausdorff-Young inequality for operators due to Werner \cite{Werner:1984}. The reader should note that there is a misprint in the statement in \cite{Werner:1984}, corrected in the following formulation.
 	
 	\begin{prop}[Hausdorff-Young inequality] \label{prop:hausdorffyoung}
  	Let $1\leq p \leq 2$ and let $q$ be the conjugate exponent determined by $\frac{1}{p}+\frac{1}{q}=1$. If $S\in \SC^p$, then $\F_W(S)\in L^q(\reals^{2d})$  with norm estimate
  	\begin{equation*}
  \|\F_W(S)\|_{L^q} \leq  \|S\|_{\SC^p}.
\end{equation*}
  \end{prop}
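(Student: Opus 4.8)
The plan is to prove this by interpolation between the two endpoint cases $p=1$ and $p=2$, which is the standard route for Hausdorff-Young and the one Werner uses. First I would establish the case $p=2$, $q=2$: by Proposition \ref{prop:wignerisunitary}, $\F_W:\SC^2 \to L^2(\reals^{2d})$ is unitary, so $\|\F_W(S)\|_{L^2}=\|S\|_{\SC^2}$ and the inequality holds with equality. Next I would establish the case $p=1$, $q=\infty$: for $S\in \SC^1$, the definition $\F_W S(z)=e^{-\pi i x\cdot\omega}\tr(\pi(-z)S)$ combined with part \eqref{prop:traceprops:abstrace} of Proposition \ref{prop:traceprops} gives $|\F_W S(z)| = |\tr(\pi(-z)S)| \leq \|\pi(-z)\|_{B(L^2)}\|S\|_{\SC^1} = \|S\|_{\SC^1}$, since $\pi(-z)$ is unitary. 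Taking the supremum over $z$ yields $\|\F_W(S)\|_{L^\infty}\leq \|S\|_{\SC^1}$.

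With both endpoints in hand, I would invoke complex interpolation. The key point is that the interpolation space between $\SC^1$ and $\SC^2$ with parameter $\theta$ is $\SC^p$ where $\frac1p = (1-\theta)\cdot 1 + \theta\cdot\frac12$, i.e. $\frac1p = 1-\frac\theta2$, and the interpolation space between $L^\infty(\reals^{2d})$ and $L^2(\reals^{2d})$ with the same parameter is $L^q$ with $\frac1q = \frac\theta2$; these are consistent with $\frac1p+\frac1q=1$. Both facts $(\SC^1,\SC^2)_{[\theta]}=\SC^p$ and $(L^\infty,L^2)_{[\theta]}=L^q$ are standard and are in fact already cited in the paper (see the remark preceding Proposition \ref{prop:convschatten}, with references \cite{Toft:2008vr,Bergh:2012up}). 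The Riesz-Thorin / Stein interpolation theorem for a linear map bounded on the two endpoint pairs then gives $\|\F_W(S)\|_{L^q}\leq \|S\|_{\SC^1}^{1-\theta}\|S\|_{\SC^2}^{\theta}$-type control, and with both endpoint constants equal to $1$ the interpolated constant is also $1$, giving $\|\F_W(S)\|_{L^q}\leq \|S\|_{\SC^p}$ for $1\leq p\leq 2$.

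One mild technical issue to address is that $\F_W$ must be recognized as a single consistent linear map on the sum space $\SC^1+\SC^2$ (or at least on $\SC^1\cap\SC^2 = \SC^1$, since $\SC^1\subseteq\SC^2$) before interpolation applies; this is immediate because the definition $\F_W S(z)=e^{-\pi i x\cdot\omega}\tr(\pi(-z)S)$ makes sense verbatim for any trace class $S$ and agrees with the $\SC^2$-extension from Proposition \ref{prop:wignerisunitary} on $\SC^1$, as shown there. The main obstacle, such as it is, is not conceptual but bookkeeping: making sure the exponents match up correctly and that the interpolation theorem is being applied to the right pair of couples with the right parameter. An alternative, more self-contained route avoiding abstract interpolation machinery would be to use the singular value decomposition $S=\sum_n s_n\,\psi_n\otimes\phi_n$ together with the identity $\F_W(\psi_n\otimes\phi_n)=A(\psi_n,\phi_n)$ and Lieb's inequality (Proposition \ref{prop:lieb}) applied to the ambiguity functions, which is in fact the sharper approach advertised in the introduction; but for the stated constant-one inequality over the full range $1\leq p\leq 2$, the interpolation argument above is the cleanest and I would present that as the main proof.
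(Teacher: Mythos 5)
Your proof is correct and follows essentially the same route as the paper's: both establish the endpoint $p=2$ via the unitarity of $\F_W$ from Proposition \ref{prop:wignerisunitary}, the endpoint $p=1$ via the trace estimate $|\tr(\pi(-z)S)|\leq\|S\|_{\SC^1}$ from Proposition \ref{prop:traceprops}, and then interpolate using $(\SC^1,\SC^2)_{\theta}=\SC^p$ and $(L^\infty,L^2)_{\theta}=L^q$ with the same exponent identifications. The additional remarks on consistency of $\F_W$ across the couple and the alternative Lieb-based argument go slightly beyond what the paper writes but do not change the substance.
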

  
 \begin{proof}
The result for $p=2$ follows from proposition \ref{prop:wignerisunitary}, where we even have equality of norms. For $p=1$, the result follows from part \eqref{prop:traceprops:abstrace} of proposition \ref{prop:traceprops}, since this proposition gives that
 	\begin{align*}
  |F_W{S}(z)|&= |\tr(\pi(-z)S)| \\
  & \leq \|\pi(-z)\|_{B(L^2)} \|S\|_{\SC^1} = \|S\|_{\SC^1},
\end{align*}
so $\|F_WS\|_{L^{\infty}}\leq \|S\|_{\tco}$. The general result $\|\F_W(S)\|_{L^q} \leq \|S\|_{\SC^p}$ now follows from interpolation as $(\SC^1,\SC^2)_{\theta}=\SC^{p}$ and $(L^{\infty},L^2)_{\theta}=L^q$, where $\frac{1}{p}=1-\frac{\theta}{2}$ and $\frac{1}{q}=\frac{\theta}{2}$ \cite{Toft:2008vr}.
 \end{proof} 
 
 If we pick $S=\psi\otimes \phi$ for $\psi,\phi \in L^2(\reals^d)$ in the Hausdorff-Young inequality, we obtain for $2\leq q <\infty$ that
 \begin{equation*}
  \iint_{\reals^{2d}} |V_{\phi}\psi(z)|^q \ dz \leq \|\psi\|_{L^2}^q\|\phi\|_{L^2}^q.
\end{equation*}
This is Lieb's uncertainty principle (Proposition \ref{prop:lieb}), except for the constant $\left(\frac{2}{q}\right)^d$ that makes Lieb's inequality sharp \cite{li90-1}. Hence we can consider Lieb's uncertainty principle to be a sharp version of the Hausdorff-Young inequality for rank-one operators. As a corollary, we note an extension of Lieb's uncertainty principle to trace class operators.
 
 \begin{cor}
 Let $2\leq q <\infty$. If $S\in \tco$, then 
 \begin{equation*}
  \|\F_W(S)\|_{L^q}\leq  \left(\frac{2}{q}\right)^{d/q}
 \|S\|_{\tco}.
\end{equation*}
	
 \end{cor}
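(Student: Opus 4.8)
The plan is to combine the singular value decomposition of $S$ with the rank-one instance of the Hausdorff-Young inequality, which we have already identified with Lieb's uncertainty principle in its sharp form. First I would write $S=\sum_{n}s_n\,\psi_n\otimes\phi_n$ via the singular value decomposition, where $\{s_n\}$ are the singular values of $S$, the systems $\{\psi_n\}$ and $\{\phi_n\}$ are orthonormal in $L^2(\reals^d)$, and $\sum_n s_n=\|S\|_{\tco}$. Let $S_N=\sum_{n=1}^{N}s_n\,\psi_n\otimes\phi_n$ be the partial sums, so that $S_N\to S$ in $\tco$. Since Proposition \ref{prop:hausdorffyoung} (applied with $p=1$, $q=\infty$, and then interpolated, or just its stated conclusion for the relevant $q$) gives that $\F_W$ is bounded from $\tco$ into $L^q(\reals^{2d})$, it follows that $\F_W(S_N)\to\F_W(S)$ in $L^q(\reals^{2d})$.

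Next, by linearity of $\F_W$ and the lemma computing $\F_W(\varphi_2\otimes\varphi_1)=A(\varphi_2,\varphi_1)$, we have $\F_W(S_N)=\sum_{n=1}^{N}s_n\,A(\psi_n,\phi_n)$. Because $|A(\psi_n,\phi_n)(z)|=|V_{\phi_n}\psi_n(z)|$ for every $z$ and $\|\psi_n\|_{L^2}=\|\phi_n\|_{L^2}=1$, Proposition \ref{prop:lieb} yields $\|A(\psi_n,\phi_n)\|_{L^q}=\|V_{\phi_n}\psi_n\|_{L^q}\le\left(\tfrac{2}{q}\right)^{d/q}$. The triangle inequality in $L^q(\reals^{2d})$ then gives
\[
\|\F_W(S_N)\|_{L^q}\le\sum_{n=1}^{N}s_n\,\|A(\psi_n,\phi_n)\|_{L^q}\le\left(\frac{2}{q}\right)^{d/q}\sum_{n=1}^{N}s_n\le\left(\frac{2}{q}\right)^{d/q}\|S\|_{\tco}.
\]
Letting $N\to\infty$ and using that $\F_W(S_N)\to\F_W(S)$ in the $L^q$-norm, we conclude $\|\F_W(S)\|_{L^q}\le\left(\tfrac{2}{q}\right)^{d/q}\|S\|_{\tco}$, which is the claimed estimate.

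The argument is essentially routine, and the only point requiring a little attention is the passage to the limit in $N$: this is precisely what the $L^q$-boundedness of $\F_W$ on $\tco$ from the Hausdorff-Young inequality delivers, ensuring genuine $L^q$-convergence of the partial sums rather than convergence in a weaker sense. Beyond this bookkeeping I do not expect any real obstacle; in particular no interpolation is needed here, since Lieb's inequality already supplies the sharp rank-one constant directly.
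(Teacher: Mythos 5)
Your proposal is correct and follows essentially the same route as the paper's own proof: singular value decomposition, $L^q$-convergence of $\F_W(S_N)$ via the Hausdorff-Young boundedness of $\F_W$ on $\tco$, the rank-one identification $\F_W(\psi_n\otimes\phi_n)=A(\psi_n,\phi_n)$, and Lieb's inequality combined with the triangle inequality. No issues to report.
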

\begin{proof}
	We expand $S=\sum_{m\in \mathbb{N}}s_m \psi_m \otimes \phi_m$ using the singular value decomposition and calculate using Lieb's uncertainty principle:
 \begin{align*}
  \|\F_W(S)\|_{L^q}&=\lim\limits_{n\to \infty} {\left\Vert\ \F_W\left(\sum_{m=1}^ns_m \psi_m \otimes \phi_m\right)\right\Vert\ }_{L^q} \\
  &=\lim\limits_{n\to \infty} \|\sum_{m=1}^ns_m A(\psi_m,\phi_m)\|_{L^q} \\
  &\leq \lim\limits_{n\to \infty} \sum_{m=1}^ns_m \|A(\psi_m,\phi_m)\|_{L^q} \\
  &\leq \lim\limits_{n\to \infty} \sum_{m=1}^ns_m \left(\frac{2}{q}\right)^{d/q}\|\psi_m\|_{L^2}\|\phi_m\|_{L^2} \\
  &= \left(\frac{2}{q}\right)^{d/q} \sum_{m=1}^{\infty} s_m = \left(\frac{2}{q}\right)^{d/q} \|S\|_{\tco}.
\end{align*}
The first step in this calculation uses that $\F_W$ is continuous from $\tco$ to $L^q(\reals^{2d})$. This follows from  proposition \ref{prop:hausdorffyoung}, which says that $\F_W$ is continuous from $\SC^p$ to $L^q(\reals^{2d})$, along with the fact that $\|S\|_{\SC^p}\leq \|S\|_{\tco}$. We also use Lieb's uncertainty principle from proposition \ref{prop:lieb} to bound $\|A(\psi_m,\phi_m)\|_{L^q}$.
\end{proof}

 \section{A generalization of Wiener's Tauberian theorem and density theorems} \label{sec:density}	
 In order to state the main results of this section, we will introduce the notion of \textit{regularity} due to Kiukas et al. \cite{Kiukas:2012gt}. For $1\leq p < \infty$, we say that $g \in L^p(\reals^{2d})$ is \emph{$p$-regular} if the translates $\{T_z g:z\in \reals^{2d}\}$ span a norm dense subspace of $L^p(\reals^{2d})$. Similarly, we say that $S\in \SC^p$ is \emph{$p$-regular} if the translates $\{\alpha_z S:z\in \reals^{2d}\}$ span a norm dense subspace of $\SC^p$. We will often refer to $1$-regularity as \textit{regularity}.
	
	If $g \in L^\infty(\reals^{2d})$ we say that $g$ is \textit{$\infty$-regular} if the translates $\{T_z g:z\in \reals^{2d}\}$ span a weak* dense subspace of $L^\infty(\reals^{2d})$. We say that $S\in \bo$ is \emph{$\infty$-regular} if the translates $\{\alpha_z S:z\in \reals^{2d}\}$ span a norm dense subspace of $K(\HS)$.

\begin{rem}
\begin{enumerate}
	\item It is clear that $\|\cdot\|_{B(L^2)}\leq\|\cdot\|_{\SC^q} \leq\|\cdot\|_{\SC^p} \leq \|\cdot\|_{\SC^1}$ for $1\leq p\leq q<\infty$, and that $\SC^p$ is a dense subspace of $\SC^q$. Thus we get that $p$-regularity implies $q$-regularity for an operator $S$ if $ p \leq q $. 
	This is also true for $q=\infty$, since any Schatten $p$-class is norm dense in $K(\HS)$.
	\item An equivalent definition for an operator $S$ to be $\infty$-regular is that the translates of $S$ span a weak* dense subspace of $\bo$ \cite{Kiukas:2006ty}. We will use both of these formulations.
\end{enumerate}
\end{rem}

We are now ready to state Wiener's famous Tauberian theorem using our newly introduced terminology. The first two of these equivalences were proved already in \cite{Wiener:1932bs} by Wiener, the last one appears for instance as theorem 2.3 in \cite{Edwards:1965ev}.

\begin{thm}[Wiener's Tauberian theorem]
	\label{thm:wienerL1}
	\
	\begin{enumerate}
			\item $f\in L^1(\reals^{2d})$ is regular $\iff$ the set $\{z\in \reals^{2d}: \F_{\sigma}f(z)=0\}$ is empty.
			\item $f\in L^2(\reals^{2d})$ is $2$-regular $\iff$ the set $\{z\in \reals^{2d}: \F_{\sigma}f(z)=0\}$ has Lebesgue measure zero.
			\item $f\in L^\infty(\reals^{2d})$ is $\infty$-regular $\iff$ the set $\{z\in \reals^{2d}: \F_{\sigma}f(z)=0\}$ has dense complement.
		\end{enumerate}
	
\end{thm}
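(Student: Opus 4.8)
The plan is to recognise the three equivalences as instances of classical results for the ordinary Fourier transform $\F$ -- Wiener's Tauberian theorem together with its $L^2$- and $L^\infty$-analogues -- and to invoke those; only the converse implications in parts (1) and (3) are genuinely deep, and I would cite them rather than reconstruct them. Since $\F_\sigma f(x,\omega)=\F f(\omega,-x)$, the zero set of $\F_\sigma f$ is the image of the zero set of $\F f$ under a fixed linear automorphism of $\reals^{2d}$ of determinant $1$, so that being empty, being Lebesgue-null, and having dense complement each transfer between $\F_\sigma f$ and $\F f$. As $\F_\sigma$ extends to $\mathcal{S}'(\reals^{2d})$ in the obvious way (so that part (3) is meaningful, reading $\{\F_\sigma f=0\}$ as the complement of $\operatorname{supp}(\F_\sigma f)$), and as the regularity hypotheses refer only to the translation action on $L^p(\reals^{2d})$, it suffices to prove the three equivalences with $\F$ in place of $\F_\sigma$.

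\emph{Part (1).} The closed span $I_f:=\overline{\operatorname{span}}\{T_zf:z\in\reals^{2d}\}$ is a closed, translation-invariant subspace of $L^1(\reals^{2d})$, hence equals the closed ideal of the convolution algebra $L^1(\reals^{2d})$ generated by $f$ (finite linear combinations of translates approximate the convolutions $g\ast f$ with $g\in L^1(\reals^{2d})$, and $g\ast f=\iint_{\reals^{2d}}g(z)\,T_zf\,dz\in I_f$). If $\F f(z_0)=0$ for some $z_0$, then the nonzero bounded functional $g\mapsto\F g(z_0)$ annihilates every $T_zf$, since $\F(T_zf)(z_0)=e^{-2\pi i z_0\cdot z}\F f(z_0)=0$, hence it annihilates $I_f$; so $I_f\neq L^1(\reals^{2d})$ and $f$ is not regular. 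Conversely, if $\F f$ has no zero then $I_f$ lies in no maximal modular ideal, i.e. has empty hull, and therefore $I_f=L^1(\reals^{2d})$ by Wiener's Tauberian theorem \cite{Wiener:1932bs,Edwards:1965ev}. This last implication is the one substantial obstacle: it rests on the Wiener division / local-membership machinery, which I would use as a black box.

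\emph{Part (2).} By Plancherel's theorem $\F(I_f)$ is a closed subspace of $L^2(\reals^{2d})$ containing $\F(g\ast f)=\F g\cdot\F f$ for every $g\in L^1(\reals^{2d})$; since $\F(L^1(\reals^{2d}))$ is dense in $C_0(\reals^{2d})$ (Stone--Weierstrass) and multiplication by a $C_0$-function is bounded on $L^2$, $\F(I_f)$ is a closed $C_0(\reals^{2d})$-submodule of $L^2(\reals^{2d})$. A standard argument shows that every such submodule has the form $\{h\in L^2(\reals^{2d}):h=0\text{ a.e. off }E\}$ for a measurable set $E$, and that here $E$ is, up to a null set, the complement of $\{\F f=0\}$ (the essential support of $\F f$). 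Hence $I_f=L^2(\reals^{2d})$ if and only if $E$ is co-null, i.e. if and only if $\{\F f=0\}$ has Lebesgue measure zero.

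\emph{Part (3).} Since $L^\infty(\reals^{2d})=L^1(\reals^{2d})^*$, the weak* closure $S_g$ of $\operatorname{span}\{T_zg:z\in\reals^{2d}\}$ equals $L^\infty(\reals^{2d})$ if and only if its pre-annihilator $J_g\subseteq L^1(\reals^{2d})$ is $\{0\}$; and $J_g$ is a closed translation-invariant subspace of $L^1$, hence a closed ideal. Such an ideal is $\{0\}$ exactly when its hull is all of $\reals^{2d}$, and one identifies $\operatorname{hull}(J_g)=\operatorname{supp}(\F g)$, so the stated condition is precisely $\operatorname{supp}(\F g)=\reals^{2d}$. One inclusion is elementary: if $z_0\notin\operatorname{supp}(\F g)$, choose a nonzero Schwartz function whose Fourier transform is supported in a small ball around $z_0$ disjoint from $\operatorname{supp}(\F g)$; its reflected conjugate lies in $J_g$ and is not annihilated by evaluation of $\F$ at $z_0$, so $z_0\notin\operatorname{hull}(J_g)$. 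The reverse inclusion -- that $g\ast k=0$ forces the continuous function $\F k$ to vanish on $\operatorname{supp}(\F g)$ -- is again part of the classical Wiener theory \cite{Edwards:1965ev}, and is the analogue here of the obstacle noted in part (1). The remaining ingredients used above (the ideal/submodule description of closed translation-invariant subspaces, density of $\F(L^1)$ in $C_0$, the classification of closed $C_0$-submodules of $L^2$, and weak* density versus triviality of the pre-annihilator) are standard.
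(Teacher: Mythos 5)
The paper does not prove this theorem at all: it simply records it as classical, citing Wiener \cite{Wiener:1932bs} for parts (1)--(2) and Edwards \cite{Edwards:1965ev} for part (3). Your sketch is correct and ultimately rests on exactly those same references for the genuinely deep implications (Wiener's local-membership/division step in (1) and (3)), while correctly supplying the routine reductions the paper leaves implicit --- the transfer from $\F_{\sigma}$ to $\F$ via the measure-preserving linear change of variables, the identification of closed translation-invariant subspaces with closed ideals (resp.\ with measurable sets in the $L^2$ case, resp.\ with pre-annihilators in the weak* case), and the reading of $\{\F_{\sigma}f=0\}$ as the complement of $\operatorname{supp}(\F_{\sigma}f)$ in part (3).
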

For $1<p<2$, Lev and Olevskii \cite{Olevskii:2011} have shown the existence of two functions in $L^1(\reals)$ with the same set of zeros for the Fourier transform, but where one function is $p$-regular and the other is not. Wiener's Tauberian theorem can therefore not be extended in an obvious way to all values of $1\leq p \leq \infty$. 

The next theorem was proved by Kiukas et al. \cite{Kiukas:2012gt}, expanding an earlier result by Werner \cite{Werner:1984}. This theorem allows us to easily infer our main result, theorem \ref{thm:praktteoremet}, and the proof illustrates the use of Werner's convolutions. We therefore include a proof in appendix A for the benefit of the reader, which essentially is a slightly elaborated version of the proof in \cite{Kiukas:2012gt}.
\begin{thm}
	\label{thm:wernerequiv}
	Let $S\in \tco$, let $1\leq p \leq \infty$, and let $q$ be the conjugate exponent of $p$ determined by $\frac{1}{p}+\frac{1}{q}=1$. The following are equivalent:
	\begin{enumerate}
		\item $S$ is $p$-regular.
		\item If $f \in L^q(\mathbb{R}^{2d})$ and $f\ast S=0$, then $f=0$.
		\item $\SC^p \ast S$ is dense in $L^p(\reals^{2d})$.
		\item If $T \in \SC^q$ and $T \ast S=0$, then $T=0$.
		\item $ L^p(\reals^{2d})\ast S$ is dense in $\SC^p$.
		\item $S \ast S$ is $p$-regular.
		\item For any \textit{regular} $T_0\in \tco$, $T_0\ast S$ is $p$-regular. 
	\end{enumerate}
	The density in points (3) and (5) is in the $p$ norm for $p<\infty$, and weak$^*$ density for $p=\infty$. 
	
	For the case $p=\infty$ we may add two further equivalent statements to the list:
	\begin{enumerate}[(i)]
		\item $K(\HS)\ast S$ is dense in $C_0(\reals^{2d})$ in the $\|\cdot\|_{L^{\infty}}$ norm.
		\item $C_0(\reals^{2d}) \ast S $ is dense in $K(\HS)$ in the operator norm $\|\cdot\|_{B(L^2)}$.
	\end{enumerate}
	
	 	Finally, there exists a $p$-regular operator $S$ for any $1\leq p \leq \infty$.
\end{thm}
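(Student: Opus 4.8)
The plan is to prove the seven-way equivalence by establishing a cycle of implications together with a few direct cross-links, relying throughout on the Fourier-analytic dictionary provided by Proposition \ref{prop:convolutionandft}, the adjoint relations of Theorem \ref{thm:adjoints}, and the module-theoretic density facts. The conceptual heart is that $p$-regularity of $S$ (density of the span of translates $\{\alpha_z S\}$ in $\SC^p$) is dual to injectivity of a convolution map, and that the two convolution operators $\mathcal{A}_S : L^q \to \SC^q$ and $\mathcal{B}_S : \SC^p \to L^p$ are Banach-space adjoints of one another. I would first record that, for $1 \le p < \infty$ with conjugate exponent $q$, statement (1) — $p$-regularity of $S$ — is \emph{equivalent} to the density of $L^p \ast S$ in $\SC^p$, i.e. to statement (5): the span of $\{\alpha_z S\}$ is contained in $L^1 \ast S$ (since $\alpha_z S = (T_z \delta)\ast S$ is approximated by $g \ast S$ for approximate identities $g$), and conversely $L^p \ast S$ lies in the closed span of translates of $S$, so the two closures coincide. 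This gives $(1)\Leftrightarrow(5)$ for free.

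Next I would exploit duality to link the injectivity statements (2) and (4) to the density statements (3) and (5). By Proposition \ref{prop:banach}, the range of a bounded operator is (weak*) dense iff its adjoint is injective. Applying this with $\mathcal{A}_S : L^q \to \SC^q$, whose adjoint is $\mathcal{B}_S : \SC^p \to L^p$ by Theorem \ref{thm:adjoints}: the range of $\mathcal{A}_S$ being dense (that is, $L^q \ast S$ dense in $\SC^q$, which is statement (5) at the conjugate exponent) is equivalent to $\mathcal{B}_S$ being injective, i.e. to statement (4). Symmetrically, density of the range of $\mathcal{B}_S$ (statement (3)) is equivalent to injectivity of $\mathcal{A}_S$ (statement (2)). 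Careful bookkeeping of which exponent lives where — and invoking the weak*-density clause of Proposition \ref{prop:banach} for the $p=\infty$ endpoint — closes the loop $(1)\Leftrightarrow(2)\Leftrightarrow(3)\Leftrightarrow(4)\Leftrightarrow(5)$. The one delicate point is the $p=\infty$ case, where $\SC^\infty = B(\HS)$ is a dual space and all densities must be interpreted in the weak* topology; here I would use the remark that $\infty$-regularity of $S$ can equivalently be phrased via weak* density in $B(\HS)$, so that Proposition \ref{prop:banach}(2) applies cleanly, and the two extra statements (i)–(ii) follow from Proposition \ref{prop:compactcontinuous}, which matches $K(\HS)$ with $C_0(\reals^{2d})$ under convolution.

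For the equivalence of (6) and (7) with the rest, the key tool is the Fourier identity $\F_\sigma(S \ast S) = \F_W(S)^2$ and more generally $\F_W(T_0 \ast S) = \F_\sigma(\F_W T_0)\cdot$ — wait, rather $\F_\sigma(T_0 \ast S) = \F_W(T_0)\F_W(S)$ from Proposition \ref{prop:convolutionandft}. The plan is to reduce $p$-regularity of an operator to a zero-set condition on its Fourier-Wigner transform via a Tauberian argument modeled on Theorem \ref{thm:wienerL1}: heuristically, $S$ is $p$-regular iff $\F_W(S)$ has a zero set of the appropriate smallness (empty for $p=1$, measure zero for $p=2$, dense complement for $p=\infty$). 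Since $\F_W(S\ast S) = \F_W(S)^2$ has exactly the same zero set as $\F_W(S)$, regularity of $S$ and of $S\ast S$ coincide, giving $(1)\Leftrightarrow(6)$. For (7), given a regular $T_0$ (so $\F_W(T_0)$ is nowhere zero by the $p=1$ case of the Tauberian correspondence), the product $\F_W(T_0 \ast S) = \F_W(T_0)\F_W(S)$ has the same zero set as $\F_W(S)$, so $T_0 \ast S$ is $p$-regular iff $S$ is. The hardest part will be making the Tauberian passage between the abstract regularity of operators and zero sets of $\F_W(S)$ fully rigorous at intermediate $p$; I would route this through the scalar Wiener theorem (Theorem \ref{thm:wienerL1}) applied to $\F_W(S)$, using that $\F_W$ intertwines $\alpha$-translation with ordinary translation and the convolution-to-product identity, rather than re-deriving the harmonic analysis from scratch.

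Finally, for the existence of a $p$-regular operator for every $1 \le p \le \infty$, it suffices by the monotonicity remark ($p$-regularity implies $q$-regularity for $p \le q$) to exhibit a single \emph{$1$-regular} operator. I would take $S = \varphi \otimes \varphi$ with $\varphi$ the normalized Gaussian of Example \ref{exmp:wignertransform}, for which $\F_W(S)(z) = e^{2\pi i x\cdot\omega}e^{-\frac{1}{2}\pi z\cdot z}$ is nowhere zero on $\reals^{2d}$. By the $p=1$ criterion just established — equivalently, by the direct chain $(1)\Leftrightarrow(2)$ combined with the injectivity of convolution against an operator whose Fourier-Wigner transform never vanishes — this $S$ is $1$-regular, hence $p$-regular for all $p$. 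This anchors the whole theorem in a concrete, verifiable example and completes the proof.
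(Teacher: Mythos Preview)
Your outline has two genuine gaps. First, the duality arguments yield $(2)\Leftrightarrow(3)$ and $(4)\Leftrightarrow(5)$ as two \emph{separate} pairs; together with your $(1)\Leftrightarrow(5)$ this gives the block $\{(1),(4),(5)\}$, but nothing you wrote connects it to $\{(2),(3)\}$. (Your sentence about $\mathcal{A}_S:L^q\to\SC^q$ having dense range being ``statement (5) at the conjugate exponent'' and being equivalent to (4) confuses the exponents: the adjoint of $\mathcal{A}_S:L^q\to\SC^q$ is $\mathcal{B}_S:\SC^p\to L^p$, whose injectivity concerns $T\in\SC^p$, not $\SC^q$.) The missing bridge is $(2)\Leftrightarrow(4)$. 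The direction $(2)\Rightarrow(4)$ is easy---convolve $T\ast S=0$ on the left by an arbitrary $A\in\tco$ and use associativity---but $(4)\Rightarrow(2)$ is the subtlest step in the paper's proof: from $f\ast S=0$ one deduces $S'\ast f=0$ for every $S'\in\tco$, and to conclude $f=0$ one needs that $\tco\ast\tco$ is dense in $L^1(\reals^{2d})$. The paper obtains this by first exhibiting the Gaussian rank-one operator as satisfying (6), and then invoking the already-established chain $(6)\Rightarrow(3)$ for that specific operator. ``Careful bookkeeping'' does not produce this bootstrap.

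Second, and more seriously, your route to $(6)$ and $(7)$ via zero sets of $\F_W(S)$ is circular and cannot work for $p\notin\{1,2,\infty\}$. The statement ``$S$ is $p$-regular iff the zero set of $\F_W(S)$ is suitably small'' is Theorem~\ref{thm:genwiener}, and its proof \emph{uses} the equivalence $(1)\Leftrightarrow(6)$ of the present theorem; you cannot invoke it here. Moreover, for intermediate $p$ the zero-set characterization already fails for functions (the paper cites Lev--Olevskii), so there is no hope of routing the operator statement through it. The paper avoids Fourier methods entirely for $(6)$ and $(7)$: it proves $(6)\Rightarrow(3)$ from the identity $T_z(S\ast S)=(\alpha_z S)\ast S\in\SC^p\ast S$, and $(3)\Rightarrow(6)$ by a two-stage approximation---first approximate $f\in L^p$ by some $T\ast S$ using (3), then approximate $T\in\SC^p$ by a finite combination of $\alpha_{z_i}S$ using (1), which is available via $(3)\Rightarrow(2)\Rightarrow(4)\Rightarrow(1)$. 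These arguments are elementary and uniform in $p$. A similar remark applies to (i)--(ii): Proposition~\ref{prop:compactcontinuous} only ensures that the convolutions land in $K(\HS)$ and $C_0(\reals^{2d})$; the density statements require a separate argument using approximate identities and the regular Gaussian operator.
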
	
	
Using the previous theorem, Werner \cite{Werner:1984} and Kiukas et al. \cite{Kiukas:2012gt} obtained a version of Wiener's Tauberian theorem for operators.
\begin{thm} \label{thm:genwiener}
		Let $S \in \tco$. 
		\begin{enumerate}
			\item $S$ is regular $\iff$ the set $\{z\in \reals^{2d}: \F_WS(z)=0\}$ is empty.
			\item $S$ is $2$-regular $\iff$ the set $\{z\in \reals^{2d}: \F_WS(z)=0\}$ has Lebesgue measure zero.
			\item $S$ is $\infty$-regular $\iff$ the set $\{z\in \reals^{2d}: \F_WS(z)=0\}$ has dense complement.
		\end{enumerate}
	\end{thm}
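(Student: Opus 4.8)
The plan is to deduce all three equivalences from the classical Wiener Tauberian theorem (Theorem~\ref{thm:wienerL1}) by transporting everything to the ordinary function $S \ast S$, using the Fourier dictionary of Proposition~\ref{prop:convolutionandft} together with the self-improvement equivalence ``$S$ is $p$-regular $\iff$ $S \ast S$ is $p$-regular'' from Theorem~\ref{thm:wernerequiv}(6).

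First I would record the two facts that make the reduction possible. By the first part of Proposition~\ref{prop:convolutionandft} (with $T = S$) we have $\F_\sigma(S \ast S) = \F_W(S)^2$, and since squaring a scalar function does not change its zero set,
\begin{equation*}
  \{z \in \reals^{2d} : \F_\sigma(S \ast S)(z) = 0\} = \{z \in \reals^{2d} : \F_W(S)(z) = 0\}.
\end{equation*}
Thus every statement about the zero set of $\F_W(S)$ is literally the same statement about the zero set of the symplectic Fourier transform of the function $S \ast S$. Second, I would note that $S \ast S$ is an honest function lying in every $L^p(\reals^{2d})$: it belongs to $L^1(\reals^{2d})$ by Lemma~\ref{lem:werner}, and to $C_0(\reals^{2d}) \subset L^\infty(\reals^{2d})$ by Proposition~\ref{prop:compactcontinuous} (using $\tco \subset K(\HS)$), hence to all intermediate $L^p$ spaces. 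This is needed so that ``$S \ast S$ is $p$-regular'' in Theorem~\ref{thm:wernerequiv}(6) is interpreted in the function sense consistent with Theorem~\ref{thm:wienerL1}.

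Then the proof is a chain of equivalences, done separately for $p \in \{1, 2, \infty\}$. In each case, $S$ is $p$-regular iff $S \ast S$ is $p$-regular by Theorem~\ref{thm:wernerequiv}(6); and $S \ast S$ (as an element of $L^1$, of $L^2$, or of $L^\infty$ respectively) is $p$-regular iff its zero set of $\F_\sigma$ is empty, has Lebesgue measure zero, or has dense complement respectively, by the three parts of Theorem~\ref{thm:wienerL1}. Combining these with the displayed identification of zero sets gives exactly statements (1), (2), (3) of the theorem.

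The only genuinely delicate point --- and the place where care is required --- is the compatibility of the regularity notions: one must be sure that ``$p$-regular'' as it appears for the function $S \ast S$ in Theorem~\ref{thm:wernerequiv}(6) matches the hypothesis of the classical Wiener theorem, in particular for $p = \infty$, where ``$\infty$-regular function'' must mean weak\textsuperscript{*} density of the translates $\{T_z(S \ast S)\}$ in $L^\infty(\reals^{2d})$, which is precisely the setting of Theorem~\ref{thm:wienerL1}(3). Once this matching is checked (and the membership $S \ast S \in L^1 \cap C_0$ recorded), the rest is bookkeeping and no further estimates are needed.
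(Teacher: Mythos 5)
Your proof is correct and follows essentially the same route as the paper: reduce to the function $S\ast S$ via Theorem \ref{thm:wernerequiv}(6), apply the classical Wiener Tauberian theorem \ref{thm:wienerL1}, and identify the zero sets using $\F_{\sigma}(S\ast S)=(\F_W S)^2$ from Proposition \ref{prop:convolutionandft}. Your extra checks (that $S\ast S\in L^1\cap C_0$ and that the two notions of $\infty$-regularity match up) are worthwhile details the paper leaves implicit.
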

	
	\begin{proof}
We prove the first part -- the others follow from the same line of reasoning. By theorem \ref{thm:wernerequiv}, $S$ is regular if and only if $S\ast S$ is regular. By theorem \ref{thm:wienerL1}, $S\ast S$ is regular if and only if $\F_{\sigma}(S\ast S)(z)\neq 0$ for any $z\in \reals^{2d}$. However, proposition  \ref{prop:convolutionandft} gives that $\F_{\sigma}(S\ast S)=(\F_WS)^2$. Thus $\F_{W}(S\ast S)\neq 0$ for any $z\in \reals^{2d}$ if and only if the same holds for $\F_WS$, which completes the proof.  
	\end{proof}	
	For pseudodifferential operators the preceding theorem takes a particularly simple form, and gives a simple procedure for obtaining regular operators.
	\begin{cor}
	Let $S\in \tco$ be the operator on $\HS$ with twisted Weyl symbol $f\in M^1(\reals^{2d})$.
	\begin{enumerate}
			\item $S$ is regular $\iff$ the set $\{z\in \reals^{2d}: f(z)=0\}$ is empty.
			\item $S$ is $2$-regular $\iff$ the set $\{z\in \reals^{2d}: f(z)=0\}$ has Lebesgue measure zero.
			\item $S$ is $\infty$-regular $\iff$ the set $\{z\in \reals^{2d}: f(z)=0\}$ has dense complement.
		\end{enumerate}
\end{cor}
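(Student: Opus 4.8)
The plan is to reduce the statement directly to Theorem \ref{thm:genwiener} by identifying the twisted Weyl symbol $f$ of $S$ with its Fourier--Wigner transform $\F_W(S)$. First I would observe that, since $f\in M^1(\reals^{2d})$, the operator $S$ lies in $\mathcal{M}$ and equals $\rho(f)$, where $\rho$ is the integrated Schr\"odinger representation; in particular $f\in M^1(\reals^{2d})\subseteq L^2(\reals^{2d})$ forces $S\in\SC^2$. Proposition \ref{prop:wignerisunitary} tells us that $\F_W$ is the two-sided inverse of $\rho$ on $\SC^2$, so $\F_W(S)=\F_W(\rho(f))=f$ as elements of $L^2(\reals^{2d})$; this is exactly the interpretation of $\F_W$ as recovering the twisted Weyl symbol that appears in the discussion following Proposition \ref{prop:wignerisunitary}.

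The only point that requires a moment's care is that parts (1) and (3) of Theorem \ref{thm:genwiener} refer to the pointwise zero set of $\F_W(S)$, whereas the identity $\F_W(S)=f$ is a priori only an equality of $L^2$-classes. To promote it to a genuine pointwise identity I would use that $S\in\tco$, so $\F_W(S)$ is continuous by the Riemann--Lebesgue lemma, together with the inclusion $M^1(\reals^{2d})\subseteq C_0(\reals^{2d})$, which makes $f$ continuous as well. Two continuous functions that agree almost everywhere agree everywhere, hence $\F_W(S)(z)=f(z)$ for all $z\in\reals^{2d}$, and in particular $\{z:\F_W(S)(z)=0\}=\{z:f(z)=0\}$.

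Once this equality of zero sets is in place, the three claimed equivalences are obtained simply by substituting $\{z:f(z)=0\}$ for $\{z:\F_W(S)(z)=0\}$ in the corresponding parts of Theorem \ref{thm:genwiener}; nothing further is needed. The main (indeed the only) obstacle is the passage from the almost-everywhere identity $\F_W(S)=f$ to the pointwise one, and as indicated this is resolved purely by continuity of both sides.
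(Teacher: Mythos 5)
Your proof is correct and follows essentially the same route as the paper's: identify the twisted Weyl symbol $f$ with $\F_W(S)$ via Proposition \ref{prop:wignerisunitary} and then quote Theorem \ref{thm:genwiener}. The extra step you include --- upgrading the $L^2$-identity $\F_W(S)=f$ to a pointwise one using continuity of $\F_W(S)$ (Riemann--Lebesgue) and the embedding $M^1(\reals^{2d})\subseteq C_0(\reals^{2d})$ --- is a detail the paper's one-line proof leaves implicit, and it is a worthwhile addition since parts (1) and (3) genuinely concern pointwise zero sets.
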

\begin{proof}
	We have shown that the Fourier-Wigner transform of a trace class operator is the twisted Weyl symbol of the operator, so $\F_W(S)=f$.
\end{proof}
\subsection{Consequences of the Cohen-Hewitt factorization theorem}
 The fact that $\SC^p$ is a Banach module over $L^1(\reals^{2d})$ for $1\leq p < \infty$ means that we may use the Cohen-Hewitt theorem, which in this case says that the closed linear span of $L^1(\reals^{2d})\ast \SC^p$ in the $\SC^p$-norm is $L^1(\reals^{2d})\ast \SC^p$ \cite[Thm. 2.1.3]{Graven:1974}. Combining this with theorem \ref{thm:genwiener}, we obtain the following result.
\begin{prop}
	For $1\le p<\infty$, every element of $\SC^p$ can be written as $f\ast S$ for $f\in L^1(\reals^{2d})$, $S\in \SC^p$.
\end{prop}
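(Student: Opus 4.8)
The plan is to realize the claim as a short combination of the Cohen--Hewitt factorization property quoted just above with the density statements of Theorem~\ref{thm:wernerequiv}, funneled through a single well-chosen regular operator.

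First I would record the structural input. By Propositions~\ref{prop:convschatten} and~\ref{prop:associative}, $\SC^p$ is a Banach module over the Banach algebra $L^1(\reals^{2d})$ under $\ast$, and $L^1(\reals^{2d})$ has a bounded approximate identity. The Cohen--Hewitt theorem, in the form cited before the proposition (\cite[Thm.~2.1.3]{Graven:1974}), then guarantees that $L^1(\reals^{2d})\ast \SC^p$ is a \emph{closed} linear subspace of $\SC^p$, equal to its own closed linear span. Consequently it suffices to prove that $L^1(\reals^{2d})\ast \SC^p$ is dense in $\SC^p$ in the $\SC^p$-norm.

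Next I would exhibit a witness for that density. Fix a $1$-regular operator $S_0\in \tco$; such an operator exists by the final assertion of Theorem~\ref{thm:wernerequiv} (concretely, one may take the rank-one Gaussian operator of Example~\ref{exmp:wignertransform}, whose Fourier--Wigner transform $e^{2\pi i x\cdot\omega}e^{-\frac12\pi z\cdot z}$ never vanishes and which is therefore regular by Theorem~\ref{thm:genwiener}(1)). Applying Theorem~\ref{thm:wernerequiv}, part~(5), with $p=1$ shows that $L^1(\reals^{2d})\ast S_0$ is dense in $\tco$ in the trace norm. Since $S_0\in\tco\subseteq\SC^p$, we have $L^1(\reals^{2d})\ast S_0\subseteq L^1(\reals^{2d})\ast\SC^p$; and because $\|\cdot\|_{\SC^p}\le\|\cdot\|_{\SC^1}$ and the finite-rank operators, hence $\tco$, are dense in $\SC^p$ for $p<\infty$, the $\SC^p$-closure of $L^1(\reals^{2d})\ast S_0$ contains $\tco$ and therefore equals $\SC^p$. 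Thus $L^1(\reals^{2d})\ast\SC^p$ is $\SC^p$-dense, and combined with its closedness from Cohen--Hewitt it must equal $\SC^p$, which is precisely the asserted factorization.

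I do not anticipate a real obstacle: the only delicate point is invoking Cohen--Hewitt in the correct ``essential module'' form, so that density of $L^1(\reals^{2d})\ast\SC^p$ in $\SC^p$ upgrades to the equality $L^1(\reals^{2d})\ast\SC^p=\SC^p$ rather than merely to density of a linear span. But this is exactly the formulation quoted before the proposition, so the remaining work is just bookkeeping with facts already established.
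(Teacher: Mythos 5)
Your argument is correct and follows essentially the same route as the paper: exhibit a single regular operator $S_0$ so that $L^1(\reals^{2d})\ast S_0$ is dense in $\SC^p$ (the paper invokes a $p$-regular operator directly via Theorem~\ref{thm:wernerequiv}(5), while you take a $1$-regular one and upgrade via $\|\cdot\|_{\SC^p}\le\|\cdot\|_{\SC^1}$ and the density of $\tco$ in $\SC^p$, which is exactly the observation made in the remark following the definition of regularity), and then apply the Cohen--Hewitt theorem in the quoted form to conclude that the dense submodule $L^1(\reals^{2d})\ast\SC^p$ is already closed, hence equal to $\SC^p$.
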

\begin{proof}
	Theorem \ref{thm:wernerequiv} shows that there exists $S_0\in \SC^p$ with $L^1(\reals^{2d})\ast S_0$ dense in $\SC^p$. Hence the closure of $L^1(\reals^{2d})\ast \SC^p$ is $\SC^p$, and by the Cohen-Hewitt theorem this equals $L^1(\reals^{2d})\ast \SC^p$.
\end{proof}

This result appears to be novel in the specific context introduced by Werner in \cite{Werner:1984}, but is discussed in the general case by Graven \cite{Graven:1974}.
\subsection{Arveson spectrum}
Parts of the results in this section may be formulated using a notion of spectrum for a group of automorphisms on a von Neumann algebra, first formulated by Arveson \cite{Arveson:1974}. Let $X$ be a von Neumann algebra with an automorphism group $\{U_z\}_{z\in \reals^{2d}}$ on $X$, and let $x\in X$. Arveson \cite{Arveson:1982} defined the spectrum $\spct_U(x)$ to be the spectrum of the family of functions $\{z\mapsto \rho(U_zx):\rho\in X_{*}\}$, where $X_{\ast}$ is the predual of $X$ considered as a subspace of the dual space of $X$.

 We will consider $U=\alpha$ and $X=\bo$, and then the predual of $X$ is $\tco$, where $T \in \tco$ acts on $S\in \bo$ by $S\mapsto \tr(T^*S)$ as before. By the spectrum of a function in $f\in L^1(\reals^{2d})$ we will mean the closed support of $\F_{\sigma}f$; with this convention the spectrum of $S\in \bo$ is related to the set of zeros of $\F_W(S)$ in a natural way.

\begin{prop} 
	Let $S\in \tco$. The spectrum $\spct_{\alpha}(S)$ is the closure of the complement of $\{z\in \reals^{2d}: \F_WS(-z)=0\}$.
\end{prop}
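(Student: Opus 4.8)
The plan is to reduce the statement to the product formula $\F_\sigma(A\ast B)=\F_W(A)\F_W(B)$ of Proposition~\ref{prop:convolutionandft}, combined with the fact from Example~\ref{exmp:wignertransform} that there is a trace class operator whose Fourier--Wigner transform vanishes nowhere. By the definition recalled above, $\spct_\alpha(S)$ is the spectrum of the family of functions $g_T(z):=\tr(T^*\alpha_z S)$ with $T\in\tco$, and, following the convention fixed in the text, the spectrum of each such function is the closed support of its symplectic Fourier transform; thus I need to understand the sets $\operatorname{supp}\F_\sigma g_T$. Since $\check{\phantom{x}}$ is an involution, the defining formula $A\ast S(z)=\tr(A\alpha_z\check S)$ gives $g_T=T^*\ast\check S$, which by Lemma~\ref{lem:werner} (or Proposition~\ref{prop:convschatten}) is a genuine element of $L^1(\reals^{2d})$, so that Proposition~\ref{prop:convolutionandft} yields $\F_\sigma g_T=\F_W(T^*)\cdot\F_W(\check S)$.

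The second ingredient is a reflection identity for the Fourier--Wigner transform. Writing $\F_W(\check S)(z)=e^{-\pi i x\cdot\omega}\tr(\pi(-z)PSP)$ and using cyclicity of the trace together with $\pi(z)P=P\pi(-z)$ from Lemma~\ref{lem:propsofweyl}, this becomes $e^{-\pi i x\cdot\omega}\tr(\pi(z)S)=\F_W(S)(-z)$. Hence $\F_\sigma g_T(z)=\F_W(T^*)(z)\,\F_W(S)(-z)$, and in particular $\F_\sigma g_T$ can only be nonzero where $\F_W S(-z)\ne 0$. Therefore $\operatorname{supp}\F_\sigma g_T\subseteq E$ for every $T\in\tco$, where $E$ denotes the closure of $\{z:\F_W S(-z)\ne 0\}$, i.e. the closure of the complement of $\{z:\F_W S(-z)=0\}$.

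For the reverse inclusion I would take $T_0=\varphi\otimes\varphi$, the self-adjoint Gaussian operator of Example~\ref{exmp:wignertransform}; then $\F_W(T_0^*)=\F_W(T_0)$ is nowhere zero, so $\F_\sigma g_{T_0}(z)=\F_W(T_0)(z)\,\F_W(S)(-z)$ vanishes precisely where $\F_W S(-z)=0$, and consequently $\operatorname{supp}\F_\sigma g_{T_0}=E$. Since $E$ thus appears as the spectrum of a single member of the family while all members have spectrum contained in $E$, taking the closure of the union over $T\in\tco$ gives $\spct_\alpha(S)=E$, as claimed.

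All the computations involved are elementary, so there is no genuine obstacle. The only points requiring care are the bookkeeping of reflections and phase factors that produces the $-z$ in the statement — this is exactly where the $\check S$ inside the convolution enters — and the (mild) point that the spectrum of a family of functions is to be read as the closure of the union of the individual spectra, which is what lets the single nowhere-vanishing operator $T_0$ determine the whole spectrum.
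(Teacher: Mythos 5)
Your proof is correct and follows essentially the same route as the paper: identify $\tr(T^*\alpha_z S)$ with $T^*\ast\check S$, apply the product formula $\F_\sigma(T^*\ast\check S)=\F_W(T^*)\,\F_W(\check S)=\F_W(T^*)(\cdot)\,\F_W(S)(-\cdot)$, and use the Gaussian operator of Example \ref{exmp:wignertransform} to supply a $T$ with nowhere-vanishing Fourier--Wigner transform. The only cosmetic difference is that you phrase the spectrum of the family as the closure of the union of supports while the paper uses the closure of the complement of the common zero set, which amounts to the same thing.
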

\begin{proof}
	By definition, $\spct_{\alpha}(S)$ is the spectrum of the functions $\tr(T^*\alpha_zS)=T^*\ast \check{S}(z)$, i.e. the closure of the complement of the set $Z:=\{z\in \reals^{2d}: \F_{\sigma}(T^*\ast \check{S})=0\ \ \forall \ T\in \tco\}$. By proposition \ref{prop:convolutionandft}, $\F_{\sigma}(T^*\ast \check{S})(z)=\F_W(T^*)(z)\F_W(\check{S})(z)=\F_W(T^*)(z)\F_W(S)(-z)$, hence $\{z\in \reals^{2d}: \F_WS(-z)=0\}$ is a subset of $Z$. To see that $Z=\{z\in \reals^{2d}: \F_WS(-z)=0\}$, note that we have constructed $T\in \tco$ with $\F_W(T^*)(z)\neq0$ for any $z\in \reals^{2d}$ in example \ref{exmp:wignertransform}.
	
\end{proof}
It follows that theorem \ref{thm:wernerequiv} and \ref{thm:genwiener} for $p=\infty$ yield a characterization of those $S\in \tco$ where the Arveson spectrum is all of $\reals^{2d}$.

\subsection{Tauberian theorems for localization operators}
In order to apply these results to localization operators, we pick $S=\varphi_2 \otimes \varphi_1$ for two windows $\varphi_1, \varphi_2 \in \HS$, and use theorem \ref{thm:realtionlocberconv} to formulate theorem \ref{thm:wernerequiv} in the terminology of the Berezin transform and localization operators. 
\begin{thm}
	\label{thm:wernerequivloc}
	Fix two windows $\varphi_1, \varphi_2 \in \HS$ for $\mathcal{A}$ and $\mathcal{B}$, let $1\leq p \leq \infty$ and let $q$ be the conjugate exponent of $p$, i.e. $\frac{1}{p}+\frac{1}{q}=1$. The following are equivalent:
	\begin{enumerate}
		\item The operator $\varphi_2\otimes \varphi_1$ is $p$-regular.
		\item $\mathcal{A}$ is injective on $L^q(\reals^{2d})$.
		\item The set $\{\mathcal{B}T : T \in \SC^p\}$ is dense in $L^p(\reals^{2d})$.
		\item $\mathcal{B}$ is injective on $\SC^q$.
		\item The set $\{\mathcal{A}f : f \in L^p(\reals^{2d})\}$ is dense in $\SC^p$.
		\item $\mathcal{B}(\varphi_2 \otimes \varphi_1)$ is $p$-regular.
		\item For any \textit{regular} $T_0\in \tco$, $\mathcal{B}T$ is $p$-regular. 
	\end{enumerate}
	The density in points (1), (3) and (5) is in the $p$ norm for $p<\infty$, and weak$^*$ density for $p=\infty$.
	\end{thm}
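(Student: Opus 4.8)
The plan is to read this theorem as a dictionary translation of Theorem~\ref{thm:wernerequiv} by way of Theorem~\ref{thm:realtionlocberconv}. I would set $S:=\varphi_2\otimes\varphi_1\in\tco$ and $R:=\check{\varphi_1}\otimes\check{\varphi_2}$; a short computation with rank-one operators, together with Lemma~\ref{lem:propsofweyl}\eqref{lem:propsofweyl:adjoint}, gives $R=\check{S}^*$, so Theorem~\ref{thm:realtionlocberconv} reads $\mathcal{A}f=\mathcal{A}_S f=f\ast S$ and $\mathcal{B}T=\mathcal{B}_S T=T\ast R$. Applying Theorem~\ref{thm:wernerequiv} to the operator $S$ at once yields the equivalence of statements (1), (2) and (5) of the present theorem (namely ``$S$ is $p$-regular'', ``$f\in L^q,\ f\ast S=0\Rightarrow f=0$'', and ``$L^p(\reals^{2d})\ast S$ is dense in $\SC^p$''), and also puts ``$S\ast S$ is $p$-regular'' on that list.

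The next step is to deal with $\mathcal{B}$, which is convolution with $R$ instead of $S$. The key point is that $S$ is $p$-regular if and only if $R$ is. The operations $T\mapsto\check{T}=PTP$ and $T\mapsto T^*$ are isometric bijections of each $\SC^p$ (the first linear, the second conjugate-linear), of $K(\HS)$, and weak$^*$-homeomorphisms of $\bo$, and by Lemma~\ref{lem:propsofweyl}\eqref{lem:propsofweyl:adjoint}--\eqref{lem:propsofweyl:parity} they intertwine translations: $(\alpha_zT)^*=\alpha_zT^*$ and $(\alpha_zT)\parcheck=\alpha_{-z}\check{T}$. Composing them gives a fixed isometric (conjugate-linear) bijection that carries the family $\{\alpha_zS:z\in\reals^{2d}\}$ onto $\{\alpha_zR:z\in\reals^{2d}\}$, and hence takes the closed linear span of the former onto that of the latter; so one spans a dense subspace precisely when the other does (and likewise for weak$^*$ density when $p=\infty$). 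I may therefore \emph{also} apply Theorem~\ref{thm:wernerequiv} to $R$; since $R$ and $S$ have the same $p$-regularity, the two seven-term lists collapse to a single equivalence class, and matching terms produces our (3) (``$\SC^p\ast R$ dense in $L^p(\reals^{2d})$''), our (4) (``$T\in\SC^q,\ T\ast R=0\Rightarrow T=0$''), and our (7) (``$T_0\ast R$, i.e.\ $\mathcal{B}T_0$, is $p$-regular for every regular $T_0$'').

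What does not translate mechanically --- and the part I expect to be the real obstacle --- is statement (6), because $\mathcal{B}(\varphi_2\otimes\varphi_1)=S\ast R$ is a ``mixed'' convolution rather than the self-convolution $S\ast S$ appearing in Theorem~\ref{thm:wernerequiv}. I would handle it directly from the module structure instead of via Wiener's theorem. By Lemma~\ref{lem:tidbits}\eqref{lem:tidbits:translate}, $T_z(S\ast R)=(\alpha_zS)\ast R=\mathcal{B}_S(\alpha_zS)$, so the closed linear span of $\{T_z(S\ast R):z\in\reals^{2d}\}$ equals the closure of $\mathcal{B}_S\big(\mathrm{span}\{\alpha_zS:z\in\reals^{2d}\}\big)$. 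Since $\mathcal{B}_S:\SC^p\to L^p(\reals^{2d})$ is bounded (Proposition~\ref{prop:convschatten}), this closure is always contained in $\overline{\mathcal{B}_S(\SC^p)}$, and it contains $\mathcal{B}_S(\SC^p)$ as soon as $S$ is $p$-regular; combined with the equivalence of (1) and (3) already established, this shows that (6) holds if and only if $\mathcal{B}_S(\SC^p)$ is dense in $L^p(\reals^{2d})$, i.e.\ if and only if (3) holds, closing the cycle. The endpoint $p=\infty$ runs along the same lines with weak$^*$ density in place of norm density (using the reformulation of $\infty$-regularity as weak$^*$ density of translates in $\bo$ and the weak$^*$ continuity of $\mathcal{B}_S$), so the density assertions in (1), (3) and (5) are read in the weak$^*$ sense, exactly as the statement prescribes.
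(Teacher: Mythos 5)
Your proof is correct and follows the same route as the paper, which simply declares Theorem \ref{thm:wernerequivloc} to be Theorem \ref{thm:wernerequiv} rewritten via Theorem \ref{thm:realtionlocberconv}. In fact you are more careful than the paper: you rightly note that $\mathcal{B}$ is convolution with $\check{S}^*$ rather than $S$ (so one must check that $S$ and $\check{S}^*$ are simultaneously $p$-regular, which your intertwining argument does), and that statement (6) involves the mixed convolution $S\ast\check{S}^*$ rather than $S\ast S$, so your direct argument that (6)$\iff$(3) via $T_z(S\ast\check{S}^*)=\mathcal{B}_S(\alpha_zS)$ closes a gap the paper glosses over.
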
	
If we apply theorem \ref{thm:genwiener} to localization operators, we obtain a characterization of those windows $\varphi_1,\varphi_2 \in \HS$ that give localization operators $\mathcal{A}^{\varphi_1,\varphi_2}$ with dense range in $\tco$, $\SC^2$ and $\bo$. This improves results by Bayer and Gr{\"o}chenig \cite{Bayer:2014td}.
\begin{thm} \label{thm:praktteoremet}
	Fix two windows $\varphi_1, \varphi_2\in \HS$.
	
	\begin{enumerate}
		\item The set $\{\mathcal{A}^{\varphi_1,\varphi_2}_f : f \in L^1(\reals^{2d})\}$ is dense in $\SC^1$ $\iff$ the set $\{z\in \reals^{2d}: A(\varphi_2,\varphi_1)(z)=0\}$ is empty.
		\item The set $\{\mathcal{A}^{\varphi_1,\varphi_2}_f : f \in L^2(\reals^{2d})\}$ is dense in $\SC^2$ $\iff$ the set $\{z\in \reals^{2d}: A(\varphi_2,\varphi_1)(z)=0\}$ has zero Lebesgue measure.
		\item The set $\{\mathcal{A}^{\varphi_1,\varphi_2}_f : f \in L^{\infty}(\reals^{2d})\}$ is weak* dense in $\bo$ $\iff$ the set $\{z\in \reals^{2d}: A(\varphi_2,\varphi_1)(z)=0\}$ has dense complement.
	\end{enumerate}
\end{thm}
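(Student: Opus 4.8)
The plan is to reduce the statement to a chain of results already established: Theorem \ref{thm:realtionlocberconv}, Theorem \ref{thm:wernerequiv}, Theorem \ref{thm:genwiener}, and the lemma computing $\F_W(\varphi_2 \otimes \varphi_1)=A(\varphi_2,\varphi_1)$. First I would set $S:=\varphi_2\otimes \varphi_1$ and note that, being rank one, $S\in \tco$ for any $\varphi_1,\varphi_2\in \HS$, so all the cited results apply to $S$. By Theorem \ref{thm:realtionlocberconv} we have $\mathcal{A}^{\varphi_1,\varphi_2}_f=f\ast S$ for every admissible $f$, and hence for $1\le p\le \infty$ the set $\{\mathcal{A}^{\varphi_1,\varphi_2}_f : f\in L^p(\reals^{2d})\}$ is exactly $L^p(\reals^{2d})\ast S$ (for $p=\infty$ this uses the duality extension of the convolution, so that $L^\infty(\reals^{2d})\ast S\subseteq \bo=\SC^\infty$).

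Next I would apply the equivalence of conditions (1) and (5) in Theorem \ref{thm:wernerequiv}: $L^p(\reals^{2d})\ast S$ is dense in $\SC^p$ (in the $\|\cdot\|_{\SC^p}$ norm for $p<\infty$ and weak$^*$ for $p=\infty$) if and only if $S$ is $p$-regular. Then Theorem \ref{thm:genwiener} translates $p$-regularity of $S$ into a condition on the zero set of $\F_W(S)$: for $p=1$ the set $\{z:\F_W S(z)=0\}$ is empty, for $p=2$ it has Lebesgue measure zero, and for $p=\infty$ it has dense complement. Finally, invoking the lemma that gives $\F_W(\varphi_2\otimes \varphi_1)=A(\varphi_2,\varphi_1)$, the zero set becomes $\{z\in\reals^{2d}:A(\varphi_2,\varphi_1)(z)=0\}$, which yields the three claimed equivalences (with $\SC^\infty=\bo$ and weak$^*$ density in case (3)).

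Since every step is a direct citation, there is no real analytical obstacle; the only point requiring a little attention is bookkeeping at the endpoint $p=\infty$ — matching the phrase ``weak$^*$ dense in $\bo$'' in the statement with statement (5) of Theorem \ref{thm:wernerequiv} for $p=\infty$, and confirming that $f\mapsto f\ast S$ does map $L^\infty(\reals^{2d})$ into $\bo$ via the duality definition so that $\{\mathcal{A}^{\varphi_1,\varphi_2}_f:f\in L^\infty\}=L^\infty(\reals^{2d})\ast S$ still holds. With these identifications in place the proof is a one-line composition of the three theorems.
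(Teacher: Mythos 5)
Your proposal is correct and follows essentially the same route as the paper: identify $\mathcal{A}^{\varphi_1,\varphi_2}_f$ with $f\ast(\varphi_2\otimes\varphi_1)$, invoke the equivalence of statements (1) and (5) of Theorem \ref{thm:wernerequiv} (the paper cites its localization-operator reformulation, Theorem \ref{thm:wernerequivloc}), and then apply Theorem \ref{thm:genwiener} together with $\F_W(\varphi_2\otimes\varphi_1)=A(\varphi_2,\varphi_1)$. Your extra care at $p=\infty$ matches the paper's conventions, so nothing is missing.
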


\begin{proof}
The density statements in this theorem are part (5) of theorem \ref{thm:wernerequivloc} for $p=1,2, \infty$, and therefore equivalent to $S$ being  $p$-regular. Furthermore, theorem \ref{thm:genwiener} relates the condition that $S$ is $p$-regular for these three values of $p$ to the set of zeros of $\F_W(S)=A(\varphi_2,\varphi_1)$. 
\end{proof}

\begin{rem}
\begin{itemize}
	\item 	Of course, these statements are also equivalent to the other statements in theorem \ref{thm:wernerequivloc} for $p=1$, $p=2$ and $p=\infty$, respectively.
	\item To the authors' knowledge, there are no results characterizing the functions $\varphi_1,\varphi_2\in L^2(\reals^{d})$ such that the set of zeros of $A(\varphi_2,\varphi_1)$ satisfy the conditions in theorem \ref{thm:praktteoremet}. This question is therefore a possible topic for further research.  
\end{itemize}

\end{rem}

The equivalence in theorem \ref{thm:praktteoremet} for $p=2$ was proved by Bayer and Gr{\"o}chenig in \cite{Bayer:2014td} using different methods. For $p=1,\infty$ they were able to prove sufficient conditions on the zero set of the cross-ambiguity function $A(\varphi_2,\varphi_1)$ for the localization operators to have dense range, but not necessity as we have done in theorem \ref{thm:praktteoremet}. For $p\neq 1,2, \infty$ our approach does not yield equivalences. We may, however, reprove the results in \cite{Bayer:2014td}, since $p$-regularity implies $p'$-regularity for $p\leq p'$. 

\begin{cor} \label{cor:densitylp}
	Fix two windows $\varphi_1, \varphi_2\in \HS$. 
	\begin{enumerate}
		\item Assume $1\leq p < 2$. If the set $\{z\in \reals^{2d}: A(\varphi_2,\varphi_1)(z)=0\}$ is empty, then the set $\{\mathcal{A}^{\varphi_1,\varphi_2}_f : f \in L^p(\reals^{2d})\}$ is norm dense in $\SC^p$.
		\item Assume $2\leq p < \infty$. If the set $\{z\in \reals^{2d}: A(\varphi_2,\varphi_1)(z)=0\}$ has Lebesgue measure zero, then the set $\{\mathcal{A}^{\varphi_1,\varphi_2}_f : f \in L^p(\reals^{2d})\}$ is norm dense in $\SC^p$.
	\end{enumerate}
\end{cor}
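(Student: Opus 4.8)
The plan is to deduce the corollary directly from the machinery already assembled, with no new work required: Theorem~\ref{thm:realtionlocberconv} identifies $\{\mathcal{A}^{\varphi_1,\varphi_2}_f : f\in L^p\}$ with $L^p(\reals^{2d})\ast S$ for $S=\varphi_2\otimes\varphi_1$; Theorem~\ref{thm:genwiener} translates the hypotheses on the zero set of $A(\varphi_2,\varphi_1)$ into regularity statements for $S$; the monotonicity remark from Section~\ref{sec:density} upgrades $r$-regularity to $p$-regularity whenever $r\le p$; and Theorem~\ref{thm:wernerequivloc}, via the equivalence of its statements (1) and (5), converts $p$-regularity of $S$ into norm density of $\{\mathcal{A}^{\varphi_1,\varphi_2}_f : f\in L^p(\reals^{2d})\}$ in $\SC^p$.

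Concretely, I would put $S=\varphi_2\otimes\varphi_1\in\tco$; the lemma computing the Fourier-Wigner transform of a rank-one operator (stated just before Example~\ref{exmp:wignertransform}) gives $\F_W(S)=A(\varphi_2,\varphi_1)$, and Theorem~\ref{thm:realtionlocberconv} gives $\mathcal{A}^{\varphi_1,\varphi_2}_f=f\ast S$. For part~(1), with $1\le p<2$ and $A(\varphi_2,\varphi_1)$ nowhere vanishing: Theorem~\ref{thm:genwiener}(1) shows $S$ is regular (that is, $1$-regular), and since $1\le p$ the monotonicity remark upgrades this to $p$-regularity of $S$; the implication $(1)\Rightarrow(5)$ in Theorem~\ref{thm:wernerequivloc} then yields that $\{\mathcal{A}^{\varphi_1,\varphi_2}_f : f\in L^p(\reals^{2d})\}=L^p(\reals^{2d})\ast S$ is norm dense in $\SC^p$. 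For part~(2), with $2\le p<\infty$ and $\{z: A(\varphi_2,\varphi_1)(z)=0\}$ of Lebesgue measure zero: Theorem~\ref{thm:genwiener}(2) shows $S$ is $2$-regular, hence $p$-regular since $2\le p$, and the same implication in Theorem~\ref{thm:wernerequivloc} finishes the argument.

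There is essentially no obstacle to surmount — the statement is a repackaging of Theorem~\ref{thm:praktteoremet} together with the monotonicity of regularity in $p$. The only point requiring care is the \emph{direction} of the implications invoked: we may use only ``empty (resp.\ null) zero set of $\F_W(S)$ $\Rightarrow$ $1$-regular (resp.\ $2$-regular)'' and ``$r$-regular $\Rightarrow$ $p$-regular for $p\ge r$'', not their converses. This is precisely why Theorem~\ref{thm:praktteoremet} gives a genuine equivalence only at $p=1,2,\infty$, whereas for the intermediate exponents one obtains only the one-way sufficient condition stated here.
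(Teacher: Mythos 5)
Your proposal is correct and follows essentially the same route as the paper: identify $\mathcal{A}^{\varphi_1,\varphi_2}_f$ with $f\ast(\varphi_2\otimes\varphi_1)$, use the operator Tauberian theorem (Theorem \ref{thm:genwiener}, equivalently Theorem \ref{thm:praktteoremet}) to get $1$- resp.\ $2$-regularity from the hypothesis on the zero set of $A(\varphi_2,\varphi_1)=\F_W(S)$, upgrade to $p$-regularity by the monotonicity remark, and conclude via the equivalence $(1)\Leftrightarrow(5)$ of Theorem \ref{thm:wernerequivloc}. Your closing remark about only using the one-way implications, and why equivalence fails for intermediate $p$, matches the paper's own discussion.
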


\begin{proof}
	If the set $\{z\in \reals^{2d}: A(\varphi_2,\varphi_1)(z)=0\}$ is empty, then the operator $S=\varphi_2 \otimes \varphi_1$ is regular by theorem \ref{thm:praktteoremet} and the following remark. As noted, this implies that $S$ is $p$-regular for $1\leq p < \infty$. By theorem \ref{thm:wernerequivloc}, this implies that $\{\mathcal{A}^{\varphi_1,\varphi_2}_f : f \in L^p(\reals^{2d})\}$ is dense in $\SC^p$.
	Exactly the same argument works for $2\leq p < \infty$, using part (2) of theorem \ref{thm:praktteoremet}. 
\end{proof}

\section{Tauberian theorems for modulation spaces}
The theory of localization operators has been extensively studied in the framework of modulation spaces \cite{Cordero:2003ke,Bayer:2014td,Boggiatto:2004}, and we would like to consider the convolutions in this framework. It is natural to look for a class of operators $\mathcal{M}$ such that $T\ast S\in M^1(\reals^{2d})$ for any $T\in \tco$ and $S\in\mathcal{M}$. It turns out that this is true when $\mathcal{M}$ is the class of operators with Weyl symbol in $M^1(\reals^{2d})$. There are other reasons for considering $\mathcal{M}$ too. Any class of pseudodifferential operators suitable for the theory of convolutions should be closed under $\alpha$, and hence by lemma \ref{lem:weylandops} the associated class of symbols should be closed under translation. We are therefore led to the class $\mathcal{M}$, as $M^1(\reals^{d})$ is the smallest Banach space in $L^1(\reals^{d})$ isometrically invariant under $M_{\omega}$ and $T_x$ \cite{Heil:2008uo}.

In particular, $\mathcal{M}$ contains operators of the form $\varphi_2\otimes \varphi_1$ for $\varphi_1, \varphi_2 \in M^1(\reals^{d})$. Theorem \ref{thm:integraloperators} shows how any operator in $\mathcal{M}$ may be constructed from such simple operators. This is exploited when proving the next two results, which are generalizations of \cite[Thm. 3.4]{Bayer:2014td} and \cite[Thm. 3.6]{Bayer:2014td}. Complete proofs may be found in \cite{Skrettingland:2017}; in this paper we just show how to use theorem \ref{thm:integraloperators} to extend the results in \cite{Bayer:2014td}.

\begin{thm}
\label{thm:locandmod}
Let $1\leq p \leq \infty$, $k\in M^1(\reals^{2d})$ and let $S\in \mathcal{M}$ be the trace class operator given on $L^2(\reals^d)$ by $S\psi(s)=\int_{\reals^d} k(s,t) \psi(t) \ dt $. If $f\in M^{p,\infty}(\reals^{2d})$, the function $f\ast S$ lies in $\SC^p$ and $\|f\ast S\|_{\SC^p}\leq C \|k\|_{M^1} \|f\|_{M^{p,\infty}}$ for some constant $C$ independent of $S$ and $f$.
\end{thm}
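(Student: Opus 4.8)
The plan is to reduce the general statement to the rank-one case by means of the Wilson-basis decomposition of Theorem \ref{thm:integraloperators}, and then to sum. Fix a Wilson basis $\{w_n\}_{n\in\mathbb{N}}$ of $L^2(\reals^d)$ and put $W_{mn}(x,t)=w_m(x)\overline{w_n(t)}$. Theorem \ref{thm:integraloperators} gives $S=\sum_{m,n}\inner{k}{W_{mn}}\,w_m\otimes w_n$ with convergence in $\tco$, and since $\{W_{mn}\}$ is a Wilson basis of $L^2(\reals^{2d})$, Section \ref{sec:wilson} tells us that $\sum_{m,n}|\inner{k}{W_{mn}}|$ is a norm on $M^1(\reals^{2d})$ equivalent to the usual one, so $\sum_{m,n}|\inner{k}{W_{mn}}|\le C_1\|k\|_{M^1}$. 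By Theorem \ref{thm:realtionlocberconv}, $f\ast(w_m\otimes w_n)=\mathcal{A}_f^{w_n,w_m}$ is a rank-one localization operator. The input I would borrow from the theory of localization operators is that for $\varphi_1,\varphi_2\in M^1(\reals^d)$ and $f\in M^{p,\infty}(\reals^{2d})$ one has $\mathcal{A}_f^{\varphi_1,\varphi_2}\in\SC^p$ with $\|\mathcal{A}_f^{\varphi_1,\varphi_2}\|_{\SC^p}\le C_0\|f\|_{M^{p,\infty}}\|\varphi_1\|_{M^1}\|\varphi_2\|_{M^1}$ (Cordero--Gr{\"o}chenig \cite{Cordero:2003ke}, cf. \cite{Bayer:2014td}). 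Since the Wilson functions are time-frequency shifts of a fixed generator and $M^1$ is isometrically invariant under such shifts, $C_2:=\sup_m\|w_m\|_{M^1}<\infty$; hence $\|f\ast(w_m\otimes w_n)\|_{\SC^p}\le C_0C_2^2\|f\|_{M^{p,\infty}}$ for all $m,n$.

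Summing over $m,n$, the series $\sum_{m,n}\inner{k}{W_{mn}}\,f\ast(w_m\otimes w_n)$ then converges absolutely in $\SC^p$ to some operator $R$ with $\|R\|_{\SC^p}\le C_0C_2^2\|f\|_{M^{p,\infty}}\sum_{m,n}|\inner{k}{W_{mn}}|\le C_0C_1C_2^2\,\|k\|_{M^1}\|f\|_{M^{p,\infty}}$, which is already the asserted estimate with $C=C_0C_1C_2^2$. It remains only to check that $R=f\ast S$. For this I would first observe that the partial sums $S_N=\sum_{m,n\le N}\inner{k}{W_{mn}}\,w_m\otimes w_n$ converge to $S$ not merely in $\tco$ but in the norm of $\mathcal{M}$, i.e. their Weyl symbols converge in $M^1(\reals^{2d})$: the Weyl symbol of $w_m\otimes w_n$ is the cross-Wigner distribution $W(w_m,w_n)\in M^1(\reals^{2d})$ with $\|W(w_m,w_n)\|_{M^1}\le C_3\|w_m\|_{M^1}\|w_n\|_{M^1}\le C_3C_2^2$, so $\sum_{m,n}|\inner{k}{W_{mn}}|\,\|W(w_m,w_n)\|_{M^1}<\infty$. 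Invoking the property of $\mathcal{M}$ recalled just before the theorem --- that $T\ast S'\in M^1(\reals^{2d})$ for $T\in\tco$ and $S'\in\mathcal{M}$, with a bound in terms of $\|T\|_{\tco}$ and the $M^1$-norm of the symbol of $S'$ --- we get, for every $T\in\tco$, that $T\ast\widecheck{S_N}^{*}\to T\ast\widecheck{S}^{*}$ in $M^1\subseteq M^{p',1}$, hence $\inner{f\ast S_N}{T}=\inner{f}{T\ast\widecheck{S_N}^{*}}\to\inner{f}{T\ast\widecheck{S}^{*}}=\inner{f\ast S}{T}$, the pairing being the H{\"o}lder pairing of $M^{p,\infty}$ with $M^{p',1}$ (here $f\ast S\in\bo$ is understood through exactly this duality extension, which is available since $f\in M^{p,\infty}\subseteq M^{\infty}=(M^1)^*$ and $T\ast\widecheck{S}^{*}\in M^1$). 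Thus $f\ast S_N\to f\ast S$ weak${}^*$ in $\bo$, and since $f\ast S_N\to R$ in $\SC^p$ this forces $R=f\ast S$.

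The step I expect to be the main obstacle is exactly this last identification: the convolution $f\ast(\cdot)$ is \emph{a priori} defined only for $f\in L^1$, or for $f$ in a dual space through the duality extension, so one must make sure the operator-valued series really converges to $f\ast S$ and not merely to something with the right norm; the device above --- upgrading $S_N\to S$ to convergence in $\mathcal{M}$ and exploiting the $M^1$-mapping property of $\mathcal{M}$ --- is what makes this clean (for $p=\infty$ the same estimates show $f\ast S_N$ is Cauchy in operator norm, so the conclusion $f\ast S\in\bo=\SC^\infty$ comes out of the same argument). The remaining ingredients --- the Wilson-basis norm equivalence, the uniform $M^1$ bound on the $w_m$, the rank-one Schatten estimate, and the bound $\|W(w_m,w_n)\|_{M^1}\lesssim\|w_m\|_{M^1}\|w_n\|_{M^1}$ --- are routine and are carried out in detail in \cite{Skrettingland:2017}.
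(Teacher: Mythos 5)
Your proof follows essentially the same route as the paper's: decompose $S$ via the Wilson basis of Theorem \ref{thm:integraloperators}, apply the Cordero--Gr{\"o}chenig rank-one estimate to each term $f\ast(w_m\otimes w_n)$, and sum using the equivalence of $\sum_{m,n}|\inner{k}{W_{mn}}|$ with $\|k\|_{M^1}$. The only real difference is that you carefully justify the interchange $f\ast S=\sum_{m,n}\inner{k}{W_{mn}}\,f\ast(w_m\otimes w_n)$ (and the meaning of $f\ast S$ for $f\in M^{p,\infty}$) through the $M^1$--$M^\infty$ duality; the paper's proof passes over this point silently and delegates it to \cite{Skrettingland:2017}, and your argument for it is sound.
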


\begin{proof}
	Cordero and Gr{\"o}chenig \cite[Thm. 3.4]{Cordero:2003ke} proved that if $S=\varphi_2\otimes \varphi_1$ for $\varphi_1,\varphi_2 \in M^1(\reals^d)$, then $\|f\ast S\|_{\SC^p}\leq C' \|f\|_{M^{p,\infty}} \|\varphi_1\|_{M^1}\|\varphi_2\|_{M^1}$ for some constant $C'$. In order to extend the result to $S=A_k$ for $k\in M^1(\reals^{2d})$, we use the Wilson basis $\{w_m\}_{m\in \mathbb{N}}$ of $\HS$ to write $S=\sum_{m,n\in \mathbb{N}} \inner{k}{W_{mn}}w_m\otimes w_n$, as is allowed by theorem  \ref{thm:integraloperators}. Then
	\begin{align*}
  f\ast S &= f\ast \sum_{m,n\in \mathbb{N}} \inner{k}{W_{mn}}w_m\otimes w_n \\
  &= \sum_{m,n\in \mathbb{N}} \inner{k}{W_{mn}}f \ast \left(w_m\otimes w_n\right).
\end{align*}
Now take the $\SC^{p}$ norm of this expression to find that
\begin{align*}
  \|f\ast S\|_{\SC^{p}}&\leq \sum_{m,n\in \mathbb{N}} |\inner{k}{W_{mn}}| \cdot \|f \ast \left(w_m\otimes w_n\right)\|_{\SC^{p}} \\
  &\leq C_2 \|f\|_{M^{p,\infty}} \sum_{m,n\in \mathbb{N}} |\inner{k}{W_{mn}}| \\
  &\leq C \|f\|_{M^{p,\infty}} \|k\|_{M^1},
\end{align*}
where $C_2$ appears from the result of Gr{\"o}chenig and Cordero and the fact that the $M^1$ norm of the Wilson basis is bounded.
\end{proof}

\begin{thm}
\label{thm:isinfeichtinger}
Let $S\in \mathcal{M}$ be the trace class operator given on $L^2(\reals^d)$ by $S\psi(s)=\int_{\reals^d} k(s,t) \psi(t) \ dt $ with $k\in M^1(\reals^{2d})$. For $T\in \SC^1$, the function 
\begin{equation*}
	T\ast S(z) = \tr(T\pi(z) \check{S} \pi(z)^*)
\end{equation*}
lies in $M^{1}(\reals^{2d})$ and $\|T\ast S\|_{M^{1}}\leq C \|k\|_{M^1} \|T\|_{\SC^1}$ for some constant $C$ independent of $S$ and $T$.
\end{thm}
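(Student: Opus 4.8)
The plan is to follow the scheme used for Theorem \ref{thm:locandmod}: settle the rank-one case first and then bootstrap to a general $S\in\mathcal{M}$ through the Wilson basis expansion of Theorem \ref{thm:integraloperators}. For the rank-one step I would take $S=\varphi_2\otimes\varphi_1$ with $\varphi_1,\varphi_2\in M^1(\reals^d)$. A short computation of the kind carried out in the proof of Theorem \ref{thm:realtionlocberconv} identifies $T\ast(\varphi_2\otimes\varphi_1)(z)$ with $\inner{T\pi(z)\check{\varphi_2}}{\pi(z)\check{\varphi_1}}=\mathcal{B}^{\check{\varphi_2},\check{\varphi_1}}T(z)$. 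Since $\psi\mapsto\check{\psi}$ is an isometry of $M^1(\reals^d)$, the windows $\check{\varphi_1},\check{\varphi_2}$ still lie in $M^1(\reals^d)$, so this is exactly the situation treated by Bayer and Gr{\"o}chenig's theorem on the Berezin transform \cite[Thm. 3.6]{Bayer:2014td}, which provides a constant $C'$ with $\|T\ast(\varphi_2\otimes\varphi_1)\|_{M^1}\leq C'\|T\|_{\tco}\|\varphi_1\|_{M^1}\|\varphi_2\|_{M^1}$.

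For general $S=A_k$ with $k\in M^1(\reals^{2d})$ I would fix a Wilson basis $\{w_m\}_{m\in\mathbb{N}}$ of $\HS$ and write $S=\sum_{m,n\in\mathbb{N}}\inner{k}{W_{mn}}\,w_m\otimes w_n$ with convergence in $\tco$, as permitted by Theorem \ref{thm:integraloperators}. The map $R\mapsto T\ast R$ is bounded from $\tco$ into $L^\infty(\reals^{2d})$ by Proposition \ref{prop:convschatten}, and in fact maps $\tco$ into $C_0(\reals^{2d})$ by Proposition \ref{prop:compactcontinuous}; applying it to the expansion of $S$ gives $T\ast S=\sum_{m,n\in\mathbb{N}}\inner{k}{W_{mn}}\,T\ast(w_m\otimes w_n)$ with convergence in the supremum norm. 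Now I would use the rank-one estimate termwise: since $\sup_m\|w_m\|_{M^1}<\infty$, each $\|T\ast(w_m\otimes w_n)\|_{M^1}$ is bounded by a fixed multiple of $\|T\|_{\tco}$, and since $\sum_{m,n\in\mathbb{N}}|\inner{k}{W_{mn}}|$ is comparable to $\|k\|_{M^1}$ (the Wilson basis norm on $M^1(\reals^{2d})$), the partial sums of the series are Cauchy in $M^1(\reals^{2d})$.

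The one point requiring care — and the main (mild) obstacle — is identifying the $M^1$-limit of the series with $T\ast S$: the operator-valued convolution a priori commutes with the $\tco$-convergent expansion of $S$ only in the topology of $L^\infty$, not of $M^1$. This is resolved by observing that $M^1(\reals^{2d})$ embeds continuously into $C_0(\reals^{2d})$, so the $M^1$-limit of the series also equals its $C_0$-limit, which we already know to be $T\ast S$. Collecting the estimates then yields $\|T\ast S\|_{M^1}\leq\sum_{m,n\in\mathbb{N}}|\inner{k}{W_{mn}}|\,\|T\ast(w_m\otimes w_n)\|_{M^1}\leq C\|k\|_{M^1}\|T\|_{\tco}$, which is the claimed bound, with $C$ depending only on the chosen Wilson basis and the constant from \cite[Thm. 3.6]{Bayer:2014td}.
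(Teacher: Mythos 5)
Your proposal is correct and follows essentially the route the paper indicates for this result: reduce to the rank-one case $S=\varphi_2\otimes\varphi_1$, where $T\ast S=\mathcal{B}^{\check{\varphi_2},\check{\varphi_1}}T$ is handled by \cite[Thm. 3.6]{Bayer:2014td}, and then sum over the Wilson basis expansion of theorem \ref{thm:integraloperators} exactly as in the printed proof of theorem \ref{thm:locandmod}. Your additional care in identifying the $M^1$-limit of the series with $T\ast S$ via the continuous embedding $M^1(\reals^{2d})\hookrightarrow C_0(\reals^{2d})$ is a correct and welcome detail.
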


\begin{rem}
\begin{enumerate}
	\item The theorem is not true for any $S\in \tco$. If $S=T=\psi\otimes \psi$ for $\psi\in L^2(\reals)$, then  $\tr(T\alpha_zS)=|V_{\psi}\psi|^2$. Benedetto and Pfander have constructed an example of a $\psi \in L^2(\reals)$ such that $|V_{\psi}\psi|^2 \notin M^1(\reals^2)$ \cite[Remark 4.6]{Pfander:2006}.
	\item A version of theorem \ref{thm:isinfeichtinger} for $T$ in other Schatten $p$-classes was proved in \cite{Bayer:2014td}. However, the proof uses that $\|\sigma\|_{M^{\infty}}\leq \|T\|_{B(L^2)}$, where $\sigma$ is the Weyl symbol of $T$ as an operator $T:M^1(\reals^d)\to M^{\infty}(\reals^{d})$. This inequality is not true in general; in fact the opposite inequality is true \cite[Thm. 3.1]{Cordero:2003ke}. We therefore settle for the special case $T\in \SC^1$.
	\end{enumerate}
\end{rem}

As mentioned, $\varphi_2\otimes \varphi_1\in \mathcal{M}$ for $\varphi_1, \varphi_2 \in M^1(\reals^{d})$. As a special case of corollary \ref{cor:densitylp} we therefore obtain theorem 5.4 in \cite{Bayer:2014td}. The proof consists merely of noting that $L^p \subset M^{p,\infty}$, so the subsets that we claim are dense are larger than those in corollary \ref{cor:densitylp}. In this sense the following result is weaker than corollary \ref{cor:densitylp}, and the main point of interest is the previously proved fact that symbols in the large space $M^{p,\infty}$ actually give operators in $\SC^p$. 

\begin{cor}
	Fix two windows $\varphi_1, \varphi_2\in M^1(\reals^d)$. 
	\begin{enumerate}
		\item Assume $1\leq p < 2$. If the set $\{z\in \reals^{2d}: A(\varphi_2,\varphi_1)(z)=0\}$ is empty, then the set $\{\mathcal{A}^{\varphi_1,\varphi_2}_f : f \in M^{p,\infty}(\reals^{2d})\}$ is norm dense in $\SC^p$.
		\item Assume $2\leq p < \infty$. If the set $\{z\in \reals^{2d}: A(\varphi_2,\varphi_1)(z)=0\}$ has Lebesgue measure zero, then the set $\{\mathcal{A}^{\varphi_1,\varphi_2}_f : f \in M^{p,\infty}(\reals^{2d})\}$ is norm dense in $\SC^p$.
	\end{enumerate}

\end{cor}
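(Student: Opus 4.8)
The plan is to obtain this corollary as an immediate consequence of Corollary~\ref{cor:densitylp} together with the mapping property of Theorem~\ref{thm:locandmod}, using only that $M^{p,\infty}(\reals^{2d})$ is a larger symbol class than $L^p(\reals^{2d})$ that still yields Schatten-class operators in the present situation.

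First I would record that $\varphi_2\otimes\varphi_1\in\mathcal{M}$ whenever $\varphi_1,\varphi_2\in M^1(\reals^d)$, as is noted in the discussion preceding the statement: the kernel $(s,t)\mapsto\varphi_2(s)\overline{\varphi_1(t)}$ is a tensor product of $M^1$-functions and hence lies in $M^1(\reals^{2d})$. Therefore Theorem~\ref{thm:locandmod} applies with $S=\varphi_2\otimes\varphi_1$, and combined with Theorem~\ref{thm:realtionlocberconv} it shows that for every $f\in M^{p,\infty}(\reals^{2d})$ the localization operator $\mathcal{A}^{\varphi_1,\varphi_2}_f=f\ast(\varphi_2\otimes\varphi_1)$ belongs to $\SC^p$. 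Thus $\{\mathcal{A}^{\varphi_1,\varphi_2}_f:f\in M^{p,\infty}(\reals^{2d})\}$ is a genuine subset of $\SC^p$, and it makes sense to ask whether it is dense.

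Next I would invoke the continuous embedding $L^p(\reals^{2d})\hookrightarrow M^{p,\infty}(\reals^{2d})$ (see \cite{Grochenig:2001}), which yields
\[
\{\mathcal{A}^{\varphi_1,\varphi_2}_f:f\in L^p(\reals^{2d})\}\subseteq\{\mathcal{A}^{\varphi_1,\varphi_2}_f:f\in M^{p,\infty}(\reals^{2d})\}.
\]
For $1\le p<2$ with $A(\varphi_2,\varphi_1)$ nowhere vanishing, Corollary~\ref{cor:densitylp}(1) says the left-hand set is already norm dense in $\SC^p$; for $2\le p<\infty$ with $A(\varphi_2,\varphi_1)$ vanishing only on a Lebesgue null set, Corollary~\ref{cor:densitylp}(2) gives the same conclusion. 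Since any superset of a norm dense set is norm dense, both assertions follow at once.

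The argument involves no real obstacle: the only facts requiring comment are that $M^{p,\infty}$ is an admissible symbol class — so that Theorem~\ref{thm:locandmod} indeed produces operators in $\SC^p$ — and that $L^p$ embeds continuously into $M^{p,\infty}$; both are standard in time-frequency analysis. The substance of the statement therefore rests entirely on Corollary~\ref{cor:densitylp} and Theorem~\ref{thm:locandmod}, and the only new point is the persistence of density when passing from the symbol class $L^p$ to the strictly larger $M^{p,\infty}$.
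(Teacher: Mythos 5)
Your proof is correct and follows essentially the same route as the paper: the paper likewise notes that $\varphi_2\otimes\varphi_1\in\mathcal{M}$ for $\varphi_1,\varphi_2\in M^1(\reals^d)$, that Theorem \ref{thm:locandmod} guarantees the operators land in $\SC^p$, and that the inclusion $L^p(\reals^{2d})\subset M^{p,\infty}(\reals^{2d})$ makes the set in question a superset of the dense set from Corollary \ref{cor:densitylp}. No gaps.
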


  \section*{Acknowledgement}
We thank the reviewer who brought a mistake in the original version of proposition \ref{prop:hausdorffyoung} to our attention. 
  \appendix
  \section{Proof of theorem \ref{thm:wernerequiv}}
  We begin by showing the existence of a $p$-regular operator. Since a regular operator is $p$-regular for $1\leq p\leq \infty$, it is sufficient to find a \textit{regular} operator $S \in \tco$. Now consider the Gaussian $\varphi$ from example \ref{exmp:wignertransform}; we will prove that $S=\varphi\otimes \varphi$ satisfies part (6) of the theorem, so when the proof of the theorem is complete we know that it is in fact regular. The reason for starting with this is that we will need an operator satisfying (6) during the proof. Proposition \ref{prop:convolutionandft} gives us that $\F_{\sigma}(S\ast S)=\F_W(S) \F_W(S)$. By example \ref{exmp:wignertransform}, $\F_W(S)$ has no zeros, thus the same is true for  $\F_{\sigma}(S\ast S)$. Theorem \ref{thm:wienerL1} then states that $S\ast S$ is regular. 
 
(2) $\iff$ (3): 
	First assume that $1\le p< \infty$. In the notation of section \ref{sec:adjoints}, statement (3) says that $\mathcal{B}_{\check{S}^*}:\SC^p \to L^p(\reals^{2d})$ has dense range. From theorem \ref{thm:adjoints} we know that the Banach space adjoint of $\mathcal{B}_{\check{S}^*}$ is $\mathcal{A}_{\check{S}^*}$, and part (1) of proposition \ref{prop:banach} states that the range of $\mathcal{B}_{\check{S}^*}$ is dense if and only if $\mathcal{A}_{\check{S}^*}$ is injective. Furthermore, the injectivity of $\mathcal{A}_{\check{S}^*}$ is equivalent to the injectivity of $\mathcal{A}_{S}$, since $f \ast \check{S}^*=0$ implies that $((f\ast \check{S}^*)\parcheck)^*=\check{f}^* \ast S=0$ by lemma \ref{lem:tidbits}. This proves the equivalence. 

	For $p=\infty$ we need to use part of (2) of proposition \ref{prop:banach}, and that $(\mathcal{A}_S)^*=\mathcal{B}_{S}$ by theorem \ref{thm:adjoints}. Otherwise, the proof is the same. 
	
	(4) $\iff$ (5):
	Follows from the same line of reasoning as above, with the roles of $\mathcal{A}$ and $\mathcal{B}$ switched.  
	
	(2) $\implies$ (4):
	 Assume that $T \ast S=0$ for some $T \in \SC^q$. Taking the convolution with an arbitrary $A\in \tco$ from the left on both sides of this equality, we find by associativity that $(A\ast T)\ast S=0$. But $A\ast T \in L^q(\reals^{2d})$, so since we are assuming (2) we get that $A\ast T=0$ for any $A \in \tco$. We will use this to show that $T=0$.
	
	As we have remarked earlier, the expression $A\ast T(z)=\tr(\check{A} \alpha_{-z}T)$ is valid, even for $q=\infty$. If we let $z=0$, we have that $A\ast T(0)=\tr(\check{A}T)=0$ for any $A\in \tco$. If we consider $T$ as an element of the dual space $(\tco)^*=\bo$, this says that $\inner{T}{\check{A}^*}=0$ for any $A\in \tco$, where $\inner{\cdot}{\cdot}$ is a duality bracket. This is clearly equivalent to $\inner{T}{A}=0$ for any $A\in \tco$, thus $T=0$.
	
(1) $\iff$ (4):
	For $p<\infty$, we will use that $\SC^q$ is the dual space of $\SC^p$. The subspace spanned by  $\{\alpha_zS:z\in \reals^{2d} \}$ is dense in $\SC^p$ if and only if $\inner{T}{\alpha_zS} =0$ for every $z \in \reals^{2d}$ implies that $T=0$, where the bracket denotes duality and $T \in \SC^q$. Since $T=0$ exactly when $\check{T}^*=0$, we may equivalently phrase this condition as $\inner{\check{T}^*}{\alpha_zS} =0$ for every $z \in \reals^{2d}$ implies that $T=0$. 
	
	 It remains to show that this last statement is equivalent to (4), which may be achieved by showing that $T\ast S=0$ is equivalent to $\inner{\check{T}^*}{\alpha_zS}=0$ for any $z\in \reals^{2d}$.
	By definition $T \ast S(z)=\tr(\check{T}\alpha_{-z}S)$, which we may write in terms of the duality bracket as $T \ast S(z)=\overline{\inner{\check{T}^*}{\alpha_{-z}S}}$. Clearly the left side is zero for all $z\in \reals^{2d}$ if and only if the right side is, which is what we wanted to prove. 
	
	For $p=\infty$ the same argument works, using the fact that $\tco$ is the dual space of $K(\HS)$.
		
	(6) $\implies$ (3):
	By lemma \ref{lem:tidbits}, $T_z(S\ast S)=\alpha_z(S)\ast S$. Therefore (6) gives that the set $\{\alpha_z(S)\ast S:z \in \reals^{2d}\}$ is dense in $L^p(\reals^{2d})$. Since $\alpha_z(S) \in \SC^p$ for any $z \in \reals^{2d}$, this implies in particular the density of $\SC^p \ast S$.
	
	(7) $\implies$ (3):
	As above, $T_z(T_0\ast S) = \alpha_z(T_0) \ast S$, and since $T_0\ast S$ is assumed to be $p$-regular, the set $\{\alpha_z(T_0)\ast S:z \in \reals^{2d}\}$ is dense in $L^p(\reals^{2d})$. But $\{\alpha_z(T_0):z \in \reals^{2d}\} \subset \tco \subset \SC^p$, which proves the statement.
	
	(4) $\implies$ (2):
	Assume that $f\ast S=0$ for $f\in L^q(\reals^{2d})$; we want to show that $f=0$. Taking the convolution with an arbitrary $S'\in \tco$ from the left and using associativity, $(S'\ast f) \ast S =0$, which by (4) implies that $S' \ast f=0$ for any $S' \in \tco$. 
	
	Now let $T$ be the operator $\varphi \otimes \varphi$ discussed at the very start of the proof. We know that $T$ satisfies (6) for $p=1$, and we have shown that $T$ then satisfies (3), i.e. $T \ast \tco$ is a dense subset of $L^1(\reals^{2d})$. Since $f\ast S'=S'\ast f=0$ for any $S'\in \tco$, we must also have $(T\ast S')\ast f=0$ for any $S'\in \tco$. In other words, $f\ast g=0$ for $g$ in the dense subset $T \ast \tco \subset L^1(\reals^{2d})$. Since the convolutions are continuous in both arguments, this shows that $f\ast L^1(\reals^{2d})=0$ and therefore that $f=0$. 
	
	(3) $\implies$ (6):
	Assume first that $p<\infty$.
	Pick an $f\in L^p(\reals^{2d})$ and an $\epsilon >0$. We need to approximate $f$ in the $L^p$-norm by a finite linear combination of elements of the form $T_{z}(S\ast S)=\alpha_z(S)\ast S$. We have proved $(3) \implies (2) \implies (4) \implies (1)$, so the elements $\{\alpha_zS:z \in \reals^{2d}\}$ span a dense subset of $\SC^p$. 
	
	Since (3) holds, we pick an operator $T \in \SC^p$ such that $\|T\ast S-f\|_{L^p} < \frac{\epsilon}{2}$. As we know that (1) holds, we then pick $c_i \in \mathbb{C}$ and $z_i\in \reals^{2d}$ for $i=1,2,...,n$ such that $\|T-\sum_{i=1}^n c_i \alpha_{z_i}S\|_p< \frac{\epsilon}{2\|S\|_1}$. An estimate now shows that
	\begin{align*}
 \|\sum_{i=1}^n c_i \alpha_{z_i}(S)\ast S-f\|_{L^p} &\leq \|\left(\sum_{i=1}^n c_i \alpha_{z_i}S-T\right)\ast S\|_{L^p} + \|T\ast S-f\|_{L^p} \\
  &< \frac{\epsilon}{2\|S\|_{\tco}}\|S\|_{\tco}+\frac{\epsilon}{2}= \epsilon,
\end{align*}
 where we have used proposition \ref{prop:convschatten} to estimate the norm of a convolution. 

 If $p=\infty$, the same basic idea applies. First approximate $f$ by $T\ast S$ in the weak* topology, then approximate $T$ by a finite linear combination of translates of $S$. We leave to the reader the trivial reformulation of the proof in terms of open sets. 

 (3) $\implies$ (7):
 	 Let $T_0\in \tco$ be regular; as noted before, $T_0$ is then also $p$-regular. The key parts of the previous argument was to first use (3) to approximate $f$ by $T\ast S$ for some $T\in \tco$, and then use the $p$-regularity of $S$ to approximate $T$ by a finite linear combination of translates of $S$. Exactly the same argument works in this case, except that we need to approximate $T$ with a finite linear combination of translates of $T_0$ instead of $S$. We leave the details to the reader. 
 	 
 (ii) $\implies$ (i):
 Following \cite{Kiukas:2012gt} we start by showing that (ii)$\implies$(i) when $S_0$ is regular. Fix $f\in C_0(\reals^{2d})$ and $\epsilon > 0$. Then pick $g \in L^1(\reals^{2d})$ such that $\|g \ast f-f\|_{\infty}<\frac{\epsilon}{2}$, which is possible by the existence of approximate identities in $L^1(\reals^{2d})$ \cite[Prop. 2.44]{Folland:2015hg}. Since we are assuming that $S_0$ is regular, we know by (3) that there is a $T\in \tco$ with $\|S_0\ast T-g \|_1<\frac{\epsilon}{2 \|f\|_{\infty}}$. From proposition \ref{prop:compactcontinuous} the operator $T\ast f$ is compact, and an estimate now shows that $S_0\ast (T\ast f)$ approximates $f$:
 \begin{align*}
  \|S_0\ast T \ast f-f\|_{L^{\infty}} &\leq \|S_0\ast T \ast f - g\ast f\|_{L^{\infty}}+\|g\ast f - f\|_{L^{\infty}} \\
  &<\|S_0\ast T-g\|_{L^1} \|f\|_{L^{\infty}}+\frac{\epsilon}{2} < \epsilon.
\end{align*}

Armed with this knowledge we now prove that (ii) $\implies$ (i) for non-regular $S\in \tco$, so assume that $C_0(\reals^{2d}) \ast S $ is dense in $K(\HS)$. We need to prove that $K(\HS)\ast S$ is dense in $C_0(\reals^{2d})$. If $S_0$ is some regular operator, then the set $S_0\ast C_0(\reals^{2d}) \ast S=\{S_0 \ast f \ast S : f \in C_0(\reals^{2d})\}$ is a subset of $K(\HS)\ast S$, and it will be enough to show that this smaller set is dense. Since we assume (ii) we know that $C_0(\reals^{2d}) \ast S$ is dense in $K(\HS)$. We also know that $S_0 \ast K(\HS)$ is dense in $C_0(\reals^{2d})$ from the first part of the argument, and if we combine these two density results with the continuity of the convolutions, we get that $S_0\ast C_0(\reals^{2d}) \ast S$ must be a dense subset of $C_0(\reals^{2d})$.

(i) $\implies$ (ii): 
We will just show that (ii) holds for a regular operator $S_0$. The proof is then completed in the same way as (ii) $\implies$ (i). Let $T\in K(\HS)$ and $\epsilon>0$; we will use three density results to find $f\in C_0(\reals^{2d})$ with $\|T-f\ast S_0\|_{B(L^2)}<\epsilon$. Firstly, we use that $T$ is compact to find a finite rank operator $A$ with $\|T-A\|_{B(L^2)}<\frac{\epsilon}{3}$. Secondly, since $A$ is finite rank it is in particular trace class, so by (3) we may find $g\in L^1(\reals^{2d})$ such that $\|A-g\ast S_0\|_{B(L^2)}<\frac{\epsilon}{3}$. Here we have used that $\|\cdot\|_{B(L^2)}\leq \|\cdot\|_{\tco}$. Finally the continuous functions with compact support are dense in $L^1$, so we can pick $f\in C_0(\reals^{2d})$ such that $\|g-f\|_{L^1}\leq \frac{\epsilon}{3\|S_0\|_{B(L^2)}}$. We claim that $\|T-S_0\ast f\|_{B(L^2)}< \epsilon$, which would conclude the proof. By the triangle inequality
\begin{align*}
  \|T-S_0\ast f\|_{B(L^2)}&\leq \|T-A\|_{B(L^2)} + \|A-S_0\ast g\|_{B(L^2)} + \|S_0\ast g-S_0 \ast f\|_{B(L^2)} \\
  &< \frac{\epsilon}{3}+\frac{\epsilon}{3}+\|S_0\|_{B(L^2)} \|f-g\|_{L^1} < \epsilon.
\end{align*}

(4) $\iff$ (ii) for $p=\infty$: 
This part follows from the same kind of argument as (2) $\iff$ (3) by using proposition \ref{prop:banach} with the Banach spaces $K(\HS)$ and $C_0(\reals^{2d})$. Similar to that argument we get that $C_0(\reals^{2d})\ast S$ is dense in $K(\HS)$ if and only if the map $T\mapsto T\ast S$ is injective from $\tco$ to $C_0(\reals^{2d})^*$. The first statement is clearly (ii), and the last statement is almost (4) when $p=\infty$, except that the codomain is $C_0(\reals^{2d})^*$ rather than $L^1(\reals^{2d})$. However, it should be clear that this is of no importance when determining whether the mapping is injective since $L^1(\reals^{2d})$ may be identified with a subset of $C_0(\reals^{2d})^*$.

\end{document}